\newtheorem{thm}{Theorem}[section]
\newtheorem{cor}[thm]{Corollary}
\newtheorem{prop}[thm]{Proposition}
\newtheorem{lem}[thm]{Lemma}
\newtheorem{conj}[thm]{Conjecture}
\newtheorem{quest}[thm]{Question}
\theoremstyle{definition}
\newtheorem{defn}[thm]{Definition}
\theoremstyle{remark}
\newtheorem{rem}[thm]{Remark}
\DeclareMathOperator{\RCD}{RCD}
\newcommand{\dist}{\mathsf{d}}
\newcommand{\meas}{\mathfrak{m}}
\newcommand{\heat}{\mathsf{h}}
\newcommand{\Heat}{\mathsf{H}}
\newcommand{\HeatK}{\mathsf{K}}
\newcommand{\HeatQ}{\mathsf{Q}}
\numberwithin{equation}{section}
\title[]{Singular Weyl's Law with Ricci curvature bounded below} 
\author[]{Xianzhe Dai}
\address[Xianzhe Dai]{Department of Mathematics, University of California,  Santa Barbara, CA, USA.}
\email{dai@math.ucsb.edu}
\thanks{X. Dai is partially supported by the Simons Foundation Grant 635115.}
\author[]{Shouhei Honda}
\address[Shouhei Honda]{Mathematical Institute, Tohoku University, Sendai, Japan.}
\email{shouhei.honda.e4@tohoku.ac.jp}
\thanks{S. Honda is partially supported by the Grant-in-Aid for Scientific Research (B) of 20H01799, the
	Grant-in-Aid for Scientific Research (B) of 21H00977 and Grant-in-Aid for Transformative
	Research Areas (A) of 22H05105.}
\author[]{Jiayin Pan}
\address[Jiayin Pan]{Department of Mathematics, University of California, Santa Cruz, CA, USA.}
\email{jpan53@ucsc.edu}
\author[]{Guofang Wei}
\address[Guofang Wei]{Department of Mathematics, University of California,  Santa Barbara, CA, USA.}
\email{wei@math.ucsb.edu}
\thanks{G. Wei is partially supported by NSF DMS 2104704. }
\keywords{Weyl's law, RCD space, Ricci limit space, heat kernel asymptotic}
\begin{document}

\begin{abstract}
	We establish two surprising types of  Weyl's laws for some compact $\RCD(K, N)$/Ricci limit spaces. The first type could have power growth of any order (bigger than one). The other one has an order corrected by logarithm similar to some fractals even though the space is 2-dimensional. Moreover the limits in both types can be written in terms of the singular sets of null capacities, instead of the regular sets.  These are the first examples with such features for $\RCD(K,N)$ spaces. Our results depend crucially on analyzing and developing important properties of the examples constructed in \cite{PanWei}, showing them isometric to the $\alpha$-Grushin halfplanes. Of independent interest, this also allows us to provide  counterexamples to  conjectures in \cite{ChCo97, KKK}.
	
\end{abstract}

\maketitle

\section{Introduction}

Weyl's law describes the asymptotic behavior of eigenvalues of the Laplace-Beltrami operator. Its study has a long rich history extended over a century, and plays an important role both in mathematics and physics. See for instance \cite{IV} for a survey.

For a closed Riemannian manifold $M^n=(M^n, g)$,  the minus Laplacian $-\Delta=-\Delta^g$ has  discrete unbounded spectrum as follows:
\begin{equation}  \label{eigenvalues}
	0=\lambda_0<\lambda_1\le \lambda_2\le \cdots \to \infty,
\end{equation} 
where $\lambda_i$ is counted with multiplicities.  It satisfies the following well-known Weyl's law
	\begin{equation}\label{eq:smooth}
	\lim_{\lambda \rightarrow \infty} \frac{N(\lambda)}{\lambda^{n/2}} = \frac{\omega_n}{(2\pi)^n} \mathrm{vol} (M^n), \end{equation}
	where $\mathrm{vol}=\mathrm{vol}^g$ denotes the Riemannian volume measure,
		\begin{equation}  \label{N-lambda}
		N(\lambda)=\sharp \left\{ i \in \mathbb{N}\ | \  \lambda_i \le \lambda \right\},
	\end{equation}
and $\omega_n$ is the volume of the $n$-dimensional Euclidean unit ball. 

In this paper we discuss the validity of similar Weyl's law for singular spaces with Ricci curvature bounded below, the so-called  Ricci limit spaces, and more generally we deal with $\RCD(K, N)$ spaces. 

Roughly speaking, an $\RCD(K, N)$ space $(X, \dist, \meas)$ is a metric measure space with Ricci curvature bounded below by $K$ and dimension bounded above by $N$ in a synthetic sense. Moreover the heat flow $\heat_t$ on $X$ is linear. The linearity of the heat flow allows us to define the heat kernel $\Heat$ on $X$ and the short time behavior of $\Heat$ should be related to the validity of expected Weyl's law on $X$ as in the case of closed Riemannian manifolds. Let us emphasize that the reference measure $\meas$ is not necessarily a Hausdorff measure. See \S \ref{pre} for the preliminaries on this topic. 

An $\RCD(K, N)$ space is called a Ricci limit space if it can be approximated by smooth Riemannian manifolds with Ricci curvature bounded below and dimension bounded above. See for instance \cite{ChCo96, ChCo97, ChCo00a, ChCo00b} for the structure theory on Ricci limit spaces. An important example of Ricci limit spaces is a closed weighted Riemannian manifold $(M^n, g, e^{-f}d\mathrm{vol})$ for some $f \in C^{\infty}(M^n)$ (see \cite{John}). In this case the corresponding (metric measure) Laplacian coincides with the Witten Laplacian $\Delta^g -g(\nabla f, \nabla \cdot)$ and (\ref{eq:smooth}) is still satisfied.

In order to state our results, here we recall the following. Let $(X, \dist, \meas)$ be an $\RCD(K, N)$ space for some $K \in \mathbb{R}$ and some finite $N \in [1, \infty)$. 
Then it is proved in \cite[Theorem 0.1]{BrueSemola} (after \cite{CoNa12} for Ricci limit spaces) that there exists a unique $n \in \mathbb{N} \cap [1, N]$ such that the $n$-dimensional regular set $\mathcal{R}_n$ has $\meas$-full measure. This $n$ is called the rectifiable dimension or the essential dimension of $(X, \dist, \meas)$. A point  $x \in X$ is called $n$-regular if 
\begin{equation}
	\left(X, \frac{1}{r}\dist, \frac{\meas}{\meas (B_r(x))},x\right)\stackrel{\mathrm{pmGH}}{\to} \left(\mathbb{R}^n, \dist_{\mathbb{R}^n}, \frac{\mathcal{H}^n}{\omega_n}, 0_n\right),\quad \text{as $r \to 0^+$.}
\end{equation}
It is called singular if it is not $k$-regular for any integer $k$. 

For a compact $\RCD(K,N)$ space $(X, \dist, \meas)$,  the canonical inclusion $$\iota: H^{1,2}(X, \dist, \meas) \\ \hookrightarrow L^2(X, \meas)$$ is a compact operator. 
Hence the eigenvalues of the minus Laplacian still satisfies \eqref{eigenvalues}. Its Weyl's law has been studied in \cite{AmbrosioHondaTewodrose, ZZ}, see the earlier work \cite{Ding02} for Ricci limit spaces. In these works, in particular in \cite{AmbrosioHondaTewodrose}, the asymptotic is related to the Hausdorff (not reference) measure of the reduced regular set $\mathcal{R}_n^*$ of the rectifiable dimension $n$ under the assumption that
\begin{equation}\label{eq}
\lim_{r\to 0^+}\int_X\frac{r^n}{\meas (B_r(x))}d\meas=\int_X\lim_{r\to 0^+}\frac{r^n}{\meas(B_r(x))}d\meas<\infty.
\end{equation}
Moreover (\ref{eq}) is equivalent to the validity of such Weyl's law:
\begin{equation}\label{eq:weylreg}
\lim_{\lambda \rightarrow \infty} \frac{N(\lambda)}{\lambda^{n/2}} = \frac{\omega_n}{(2\pi)^n} \mathcal{H}^n(\mathcal{R}_n^*)<\infty.
\end{equation}
See Theorem~\ref{R-weyl} for the precise statement. In this paper we call (\ref{eq:weylreg}) \textit{regular} Weyl's laws. It seems to us that experts may believe the validity of (\ref{eq}) for any compact $\RCD(K, N)$ space.
It is worth mentioning that if $(X, \dist, \meas)$ is non-collapsed, then (\ref{eq}) is easily satisfied, thus (\ref{eq:weylreg}) holds. See Remark \ref{remnoncollapsed}.

In this paper we show Weyl's law relating to the \textit{singular} set, giving the first example with this feature.  This is an unexpected result,  since  in particular  (\ref{eq})  is not satisfied.
\begin{thm}[Corollary \ref{cor:weyl sing}]\label{thm1}
For any  $\beta \in (2, \infty)$,	there is a compact $\RCD(-1,N)$ space $(X, \dist, \meas)$ for 
 $N$ big depending on $\beta$, whose rectifiable dimension is $2$,
such that the following limit exists 
	\[
0<	\lim_{\lambda \rightarrow \infty} \frac{N(\lambda)}{\lambda^{\beta/2}} < \infty. 
	\]
	Moreover the limit coincides with the Hausdorff measure of the singular set $\mathcal{S}$ of $X$ up to multiplication by a ``canonical'' constant.
\end{thm}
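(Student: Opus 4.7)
The idea is to realize the desired example by taking the compactification of the $\alpha$-Grushin halfplane construction from \cite{PanWei}, which by the earlier sections of this paper is known to produce a compact $\RCD(-1,N)$ space $(X,\dist,\meas)$ (with $N=N(\alpha)$ large) of rectifiable dimension $2$, whose singular set $\Ss$ is the image of the $\alpha$-singular axis. I would parametrize the construction by $\alpha\in(0,\infty)$ and choose $\alpha=\alpha(\beta)$ so that the Grushin-type scaling produces the exponent $\beta/2$, with the expected relation $\beta=2+\alpha$ (or equivalently one transverse Hermite-type scaling on the axis).

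The next step is to translate the Weyl statement into a short-time heat trace statement via a Tauberian argument. Concretely, for the compact $\RCD$ space one has
\begin{equation*}
\sum_{i\ge 0} e^{-t\lambda_i} \;=\; \int_X \Heat_t(x,x)\,d\meas(x),
\end{equation*}
and Karamata's theorem reduces proving $N(\lambda)\sim c\,\lambda^{\beta/2}$ to establishing that the right-hand side is asymptotic to $c\,\Gamma(1+\beta/2)\,t^{-\beta/2}$ as $t\to 0^+$. I would then split the integral into the regular part and a tubular $\epsilon$-neighborhood $U_\epsilon$ of $\Ss$. On the regular part, the space is locally a smooth Riemannian $2$-manifold, and standard Minakshisundaram--Pleijel type asymptotics give a contribution of order $t^{-1}$, which, since $\beta>2$, is negligible.

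The principal contribution therefore comes from $U_\epsilon$, and the main technical step is to produce sharp on-diagonal heat kernel bounds in the Grushin regime. I would do this by a blow-up/rescaling argument: at each point $p\in\Ss$ the pmGH tangent is the model $\alpha$-Grushin halfplane, whose heat kernel is $(t,x)$-self-similar with scaling exponent $\beta/2$, so
\begin{equation*}
\Heat_t(p,p) \sim \kappa(\alpha)\, t^{-\beta/2}
\end{equation*}
for a universal constant $\kappa(\alpha)$ obtainable by separation of variables on the model (an Hermite-type spectral expansion in the transverse variable). Uniformity as one moves along $\Ss$ and a covering argument (using that $\Ss$ carries locally finite $1$-Hausdorff measure from the Pan--Wei construction) let me integrate this asymptotic against the induced measure on $\Ss$ and conclude that
\begin{equation*}
\int_X \Heat_t(x,x)\,d\meas(x) \;\sim\; \kappa(\alpha)\cdot\mathcal{H}^{d(\alpha)}(\Ss)\cdot t^{-\beta/2},
\end{equation*}
for an appropriate Hausdorff dimension $d(\alpha)$ of $\Ss$ detected by the intrinsic scaling of the Grushin tangent. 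Applying Karamata then yields both the existence of the limit and the identification of the constant.

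I expect the main obstacle to be the uniform heat kernel asymptotic near $\Ss$: the measure $\meas$ is not a Hausdorff measure, and \eqref{eq} fails precisely because of the singular locus, so one cannot invoke the framework of \cite{AmbrosioHondaTewodrose} directly. The technical heart of the argument is to justify exchanging the $t\to 0$ limit with the integral over $U_\epsilon$, which I would handle by proving Gaussian-type off-diagonal bounds on the model Grushin halfplane, transferring them to $X$ via the isometry established earlier in the paper, and then applying dominated convergence to the rescaled integrand. The final identification of the leading constant with the Hausdorff measure of $\Ss$ (up to a ``canonical'' factor) is then essentially a consequence of the self-similarity of the Grushin heat kernel and the $1$-rectifiable structure of $\Ss$.
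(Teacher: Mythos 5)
Your overall architecture (reduce to the short-time heat trace via Karamata, isolate the contribution of a neighborhood of $\mathcal{S}$, exploit the dilation self-similarity of the Grushin model) matches the paper, but the quantitative heart of your argument rests on two claims that are false for these examples. First, the singular set $\mathcal{S}=\{r=0\}$ is \emph{not} $1$-rectifiable with locally finite $\mathcal{H}^1$-measure: by Theorem \ref{dist}(4) the induced distance on the $v$-axis is $\dist((0,v),(0,0))=C|v|^{1/(1+2\alpha)}$, so $\mathcal{S}$ is a snowflaked line of Hausdorff dimension $k=1+2\alpha=\beta$, and the ``Hausdorff measure of $\mathcal{S}$'' in the statement means $\mathcal{H}^{k}(\mathcal{S})$ with $k=\beta$ (see Theorem \ref{thm:weyl} and Corollary \ref{cor:weyl sing}); the whole point of the theorem is that the exponent $\beta$ is the Hausdorff dimension of the singular set. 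Correspondingly the correct parameter relation is $\beta=1+2\alpha$ with $\alpha>\tfrac12$, not $\beta=2+\alpha$. Second, the pointwise on-diagonal asymptotic $\Heat(p,p,t)\sim\kappa(\alpha)\,t^{-\beta/2}$ at $p\in\mathcal{S}$ is wrong: by Proposition \ref{prop:vol} one has $\meas(B_s(p))\asymp s^{(n+3)/2}$ at a singular point, so the Li--Yau bounds force $\Heat(p,p,s^2)\asymp s^{-(n+3)/2}$, a much faster blow-up than $s^{-\beta}$ when $n$ is large. The rate $t^{-\beta/2}$ for the trace does not come from a pointwise asymptotic on $\mathcal{S}$ integrated against a surface measure (which could not compute $\int_X\Heat(x,x,t)\,d\meas$ anyway, since $\meas(\mathcal{S})=0$); it emerges only after integrating $\Heat(x,x,s^2)=h(r,s)/\meas(B_s(x))$ in the $r$-direction against the degenerate density $r^{(n-1)/2-2\alpha}\,dr\,dv$, where the competition between the blow-up of $1/\meas(B_s(x))$ and the vanishing of the density produces the convergent $\tau$-integral in \eqref{eq:Heat Integral} precisely when $\alpha>\tfrac12$.

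The paper's actual route replaces your missing uniform pointwise asymptotic by the scaling identity $q(x,y,s)=q(F_\lambda x,F_\lambda y,\lambda s)$ for $q=\meas(B_s(x))\Heat(x,y,s^2)$ together with the explicit ball-volume estimates of Proposition \ref{prop:vol}; this yields Theorem \ref{int-heat-Y} without ever computing the model heat kernel (separation of variables is used only for the $\alpha=\tfrac12$ eigenvalue computation). It then passes from $Y$ to the compact $X$ via the covering-space heat kernel formula and a Duhamel gluing, and identifies the limit measure as $c\,1_{\mathcal{S}}\,d\mathcal{H}^{k}$ by checking that the weak limit of $s^k\Heat(x,x,s^2)\,d\meas$ is supported on $\mathcal{S}$ and satisfies $\nu(B_r(z))=Cr^{k}$ for $z\in\mathcal{S}$. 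If you want to salvage your plan, you would need to replace steps built on $1$-rectifiability and on the pointwise $t^{-\beta/2}$ law by this integrated, measure-weighted analysis near $\mathcal{S}$.
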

Note that $\beta$ may not be an integer, unlike all earlier results, and that the canonical constant stated above is determined by the short time behavior of the heat kernel $\Heat(x,x,s^2)$ on the diagonal near the singular set. See also Theorem \ref{thm:weyl}. Moreover, it is worth mentioning that the regular set $\mathcal{R}_2$ of the example above is a smooth weighted $2$-dimensional (incomplete) Riemannian manifold and that the singular set $\mathcal{S}=X\setminus \mathcal{R}_2$ has null $2$-capacity with respect to $\meas$, in particular the eigenvalues on $X$ coincides with that of the Dirichlet Laplacian on $\mathcal{R}_2$. See Proposition \ref{propcap}.

In another direction we also show the surprising case that the power law $N(\lambda) \sim \lambda^\beta$ is not satisfied for any $\beta \ge 0$ as $\lambda \rightarrow \infty$. Namely,
\begin{thm}[Theorem \ref{2weyl}]\label{thmsingw}
	There is a compact $\RCD(-1,10)$ space $(X, \dist, \meas)$  with both the Hausdorff and rectifiable dimensions $2$ such that  
	\begin{equation*}
	\lim_{\lambda \to \infty}\frac{N(\lambda)}{\lambda \log \lambda} = \frac{1}{4 \pi}. 
	\end{equation*}
\end{thm}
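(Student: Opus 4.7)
The plan is to exhibit the example as a specific compact $\RCD(-1,10)$ model in the $\alpha$-Grushin halfplane family of \cite{PanWei}, chosen at the critical exponent $\alpha$ where the density of $\meas$ is borderline non-integrable near the singular set, and to extract the $\lambda\log\lambda$ asymptotic from a heat-trace computation via Karamata's Tauberian theorem.

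First, I would fix the compact $\RCD(-1,10)$ representative from \cite{PanWei} at the critical parameter. By Proposition \ref{propcap}, the singular set $\mathcal{S}=X\setminus \mathcal{R}_2$ has null $2$-capacity, so the spectrum coincides with the Dirichlet spectrum of the weighted Laplacian on the smooth incomplete Riemannian manifold $\mathcal{R}_2$, and the heat trace equals $\sum_i e^{-\lambda_i t}=\int_X \Heat(x,x,t)\,d\meas$. The slowly varying form of Karamata's Tauberian theorem then converts the target $N(\lambda)\sim \lambda\log\lambda/(4\pi)$ into the equivalent heat-trace asymptotic
\[
\int_X \Heat(x,x,t)\, d\meas(x) \;\sim\; \frac{\log(1/t)}{4\pi t}\quad \text{as } t\to 0^+,
\]
since $\Gamma(2)=1$.

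Second, I would apply the pointwise on-diagonal asymptotic $(4\pi t)\Heat(x,x,t)\to \omega_2\lim_{r\to 0^+} r^2/\meas(B_r(x))$ from \cite{AmbrosioHondaTewodrose}, valid at $\meas$-a.e.\ $x\in\mathcal{R}_2$. Setting $\varphi(x):=\omega_2\lim_{r\to 0^+} r^2/\meas(B_r(x))$, an explicit computation with the Grushin volume form at the critical exponent shows that $\varphi$ is locally bounded on $\mathcal{R}_2$ but fails to be $\meas$-integrable in the borderline logarithmic way, with
\[
\int_{\{d(\cdot,\mathcal{S})\ge\epsilon\}} \varphi\,d\meas \;\sim\; \log(1/\epsilon) \quad \text{as } \epsilon\to 0^+.
\]
Setting $\epsilon\asymp\sqrt{t}$ in this truncated integral and using the pointwise limit above on the outer region produces the leading term $\log(1/t)/(4\pi t)$ in the heat trace.

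The main obstacle is the transitional shell $\{d(x,\mathcal{S})\lesssim\sqrt{t}\}$, where the pointwise asymptotic degenerates. Here I would bound the contribution using the Gaussian upper bounds on $\Heat$ available under $\RCD(-1,10)$ combined with the Grushin tube-volume estimate, showing that the shell contributes only $O(1/t)$ and so is absorbed into the error. The complementary region is handled by dominated convergence on $\mathcal{R}_2$ against the locally bounded majorant given by the Li--Yau type heat-kernel bound. Assembling the two estimates yields the required heat-trace asymptotic, which via Karamata gives $N(\lambda)\sim \lambda\log\lambda/(4\pi)$; the exact constant $1/(4\pi)$ emerges from the $\alpha$-Grushin self-similarity and matches the Euclidean prefactor for the $2$-dimensional regular part.
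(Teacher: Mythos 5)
Your overall architecture is sound and matches the paper's in outline: reduce to the heat trace via Karamata with $L=\log$, identify the target asymptotic $\int_X \Heat(x,x,t)\,d\meas \sim \frac{\log(1/t)}{4\pi t}$, and split the trace integral into an outer region $\{\dist(\cdot,\mathcal S)\ge \sqrt t\}$ and a shell. Your computation of $\varphi$ from the Grushin volume form is correct (indeed $\varphi\,d\meas = d\mathcal{H}^2 = r^{-1}dr\,dv$ at the critical exponent, whose truncated integral diverges logarithmically), your shell bound $O(1/t)$ via Li--Yau is correct, and the constants check out formally. But there is a genuine gap where the theorem's content actually lives.

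The leading term $\frac{\log(1/t)}{4\pi t}$ is generated entirely by the transitional annulus $\sqrt t \le r \le \delta$, and on that annulus neither of your two tools applies. Dominated convergence cannot produce a divergent leading term (the limit $\varphi$ is not integrable), so it only covers the region at a fixed positive distance from $\mathcal S$. And the a.e.\ pointwise asymptotic of \cite{AmbrosioHondaTewodrose} is purely qualitative: writing $(4\pi t)\Heat(x,x,t)=\varphi(x)\bigl(1+E(x,t)\bigr)$, all you know is that $E$ is bounded (Li--Yau) and tends to $0$ pointwise. On the annulus the error integral
\begin{equation*}
\frac{1}{4\pi t}\int_{\{\sqrt t\le r\le \delta\}}\varphi(x)\,|E(x,t)|\,d\meas
\;=\;\frac{v_2-v_1}{4\pi t}\int_{\sqrt t/\delta}^{1}\frac{|E(\tau)|}{\tau}\,d\tau
\end{equation*}
is a priori of the \emph{same} order $\log(1/t)/t$ as the main term, so boundedness plus pointwise decay of $E$ yields only the growth rate $N(\lambda)\asymp\lambda\log\lambda$, not the constant $\frac{1}{4\pi}$. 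What is needed is a rate, $|E(\tau)|=O(\tau^{\delta})$, making $\int_0^1|E(\tau)|\tau^{-1}d\tau<\infty$. This is exactly what the paper's Lemma \ref{int-heat-Y-1/2} supplies, and it does not come from abstract RCD theory: it comes from the exact dilation invariance $h(r,s)=h(1,s/r)$ of Lemma \ref{lem:h} (reducing the on-diagonal kernel to one variable) combined with the classical smooth two-dimensional expansion $h(1,\tau)=\tfrac14+O(\tau)$ at the interior reference point and the explicit expansion $G(\tau)=\tfrac{4}{\pi(n-1)}+O(\tau)$ of the ball-volume ratio. Your appeal to ``the $\alpha$-Grushin self-similarity'' gestures at this but does not produce the quantitative error estimate.

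A secondary gap: the heat kernel is not a local object (the paper explicitly warns that local isomorphisms do not preserve it), so you cannot transfer the explicit Grushin computations on the noncompact model $Y$ to the compact space $X$ without an argument. The paper does this via the covering-space heat kernel identity (Proposition \ref{prop:heat-cover}) together with a Duhamel/parametrix gluing showing $\Heat_X$ agrees with $\Heat_Y$ near $\mathcal S$ up to $O(e^{-c/t})$. Your proposal omits this step, and it cannot be bypassed once you need the rate from the dilation structure, since the dilations exist only on $Y$.
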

Note that the metric structure of any examples above is not an Alexandrov spaces though they are topologically $2$-dimensional. This fact should be compared with \cite{KL, LS}. Moreover the value $\frac{1}{4\pi}$ also coincides with the $2$-dimensional Hausdorff measure of the singular set $\mathcal{S}$ up to multiplication by a ``canonical'' constant; see Theorem \ref{2weyl}. It is worth mentioning that any neighborhood of $\mathcal{S}$ has infinite $2$-dimensional Hausdorff measure, a very different situation from Theorem \ref{thm1} where the space has a finite $\beta$-dimensional Hausdorff measure; see Remark \ref{rem:infinite}. In particular this also provides a counterexample to \cite[Conjecture 1.34]{ChCo97}; see Remark \ref{rem:counterexample}.
\begin{rem} There are many studies of Weyl's law for singular, or subRiemannian, or almost Riemannian incomplete  metrics with various measures, see e.g. \cite{Lian, Boscain-PS2016, CPR, dVHT}. 
The techniques do not apply to our case though. In \cite{Boscain-PS2016, CPR}, the measure blows up at the singular set while our measure degenerates at the singular set. In \cite{dVHT} the measure is regular (i.e., neither blows up nor degenerates at the singularity).  Moreover in these three papers the spaces are not $\RCD$, and the power of the asymptotic growth is always rational (integer or half integer) while our power could be irrational.  With a change of variable as in Remark \ref{rem:change}, a weak form of our results  (i.e. only the rate of growth instead of precise asymptotics) can be derived from \cite[5.3]{Lian}.
But the  method in \cite{Lian} does not allow one to  obtain the precise asymptotics,  as examples in that paper show. Indeed, without curvature condition, precise asymptotics do not necessarily exist in that case (note that such a behavior is known in the fractal setting \cite[Corollary 7.2]{BH}). 
 Our focus is to find  the precise asymptotics as well as the exact leading order term. 
	 Moreover  we can write down the limit of $N(\lambda)$ as $\lambda \to \infty$ in terms of the singular set explicitly. The surprising points in the examples above are; the spaces have lower bounds of Ricci curvature in a synthetic sense and the limits can be written by sets of null-capacity (note that any set of null 2-capacity can be removable from the point of view of potential theory). Such behaviors seem to be unknown even in the fractal setting.
	 
In another direction, if one considers Schr\"odinger operators on complete noncompact manifolds, then Weyl's law could also involve log term, see \cite[Theorem 1.1 and Remark 1.2]{DaiYan22}. For related results on the small time heat kernel expansion of Witten Laplacian, see \cite[Theorem 1.1]{DaiYan20}. Such expansion implies a Weyl law by the Karamata-Tauberian type theorem \cite[Theorem 2.4]{DaiYan22}
\end{rem}
	
	Crucial to our construction we further develop  the examples constructed by the last two authors in \cite{PanWei}. These examples are the first Ricci limit spaces whose Hausdorff dimensions may not be integer and moreover,  the Hausdorff dimension of the singular set is bigger than that of the regular set, see Theorem~\ref{Pan-Wei example}. Thus the examples obtained above are actually Ricci limit spaces and we call the above asymptotic behaviors of eigenvalues \textit{singular} Weyl's laws.
	
Let us explain how to achieve the results above. By Karamata theorem \cite[Page 445, Theorem 2]{Feller}
\begin{equation}\label{taubTer}
\lim_{\lambda \to \infty}\frac{N(\lambda)}{\lambda^{\alpha/2} L(\lambda^{-1})}=C \in  [0, \infty) \Longleftrightarrow \lim_{t \to 0^+} \frac{\sqrt{t}^{\alpha}}{ L(t)} \sum_ie^{-\lambda_it}=\Gamma (\alpha+1)C \in [0, \infty) 
\end{equation}
for any slowly varying function $L(t): \mathbb R_+ \rightarrow \mathbb R_+$ as $t\rightarrow 0^+$ (i.e. $\lim_{t \rightarrow 0^+} \tfrac{L(a t)}{L(t)} = 1$ for any fixed $a>0$). We will apply this to $L$ a constant function for Theorem \ref{thm1} and $L$ the log function for Theorem \ref{thmsingw}.  
Hence 
the asymptotic of the eigenvalues is reduced to the short time behavior of the heat trace. To understand the heat kernel,  we first derive a complete and explicit description  of the metric and measure of the Pan-Wei examples $Y$ in \S 3.1, 3.2.   Surprisingly it turns out that $Y$ is isometric to the $2\alpha$-Grushin halfplane. The Grushin plane is a well known space in sub-Riemannian geometry \cite[Section 3.1]{Bel}; see \cite{Pan22} for further connection of Ricci limit space and sub-Riemannian geometry. In particular the space $Y$ has
 important metric dilations as follows.
\begin{thm}
	 For each $\lambda>0$, there is a homeomorphism $F_\lambda: Y \rightarrow Y$ such that
	$$\dist (F_\lambda(y_1),F_\lambda(y_2))=\lambda\cdot \dist (y_1,y_2)$$
	for all $y_1,y_2\in Y$. 
\end{thm}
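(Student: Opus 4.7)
The plan is to work in the Grushin halfplane model established in the sections preceding this theorem. Having identified $Y$ with the $2\alpha$-Grushin halfplane, $Y$ is (up to boundary behavior) the manifold $\{(x,y) \in \mathbb{R}^2 : x \geq 0\}$ equipped with the length distance induced on the interior $\{x>0\}$ by the degenerate Riemannian metric of the form $g = dx^2 + x^{-4\alpha} dy^2$ (equivalently, the Carnot--Carath\'eodory distance of the sub-Riemannian structure generated by $\partial_x$ and $x^{2\alpha}\partial_y$). The statement of the theorem is then a consequence of the standard anisotropic dilations of Grushin-type structures, so the task reduces to (i) writing down the correct dilation exponent and (ii) justifying that it behaves well on the singular line $\{x=0\}$.

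For fixed $\lambda>0$, define
\[
F_\lambda(x,y) = \bigl(\lambda x,\ \lambda^{2\alpha+1} y\bigr).
\]
First I would observe that $F_\lambda$ is a smooth diffeomorphism of $\mathbb{R}^2$ preserving the closed halfplane and the line $\{x=0\}$, hence a homeomorphism of $Y$ onto itself (with inverse $F_{\lambda^{-1}}$). On the regular (open) halfplane $\{x>0\}$, a direct computation gives
\[
F_\lambda^* g \;=\; (\lambda\, dx)^2 + (\lambda x)^{-4\alpha}\bigl(\lambda^{2\alpha+1} dy\bigr)^2 \;=\; \lambda^2\bigl(dx^2 + x^{-4\alpha} dy^2\bigr) \;=\; \lambda^2 g.
\]
Therefore the Riemannian length of any smooth curve $\gamma$ in $\{x>0\}$ satisfies $\mathrm{length}(F_\lambda\circ\gamma) = \lambda \cdot \mathrm{length}(\gamma)$. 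Taking infima over admissible (piecewise smooth) curves joining two points in the interior yields $\dist(F_\lambda(y_1),F_\lambda(y_2)) = \lambda\, \dist(y_1,y_2)$ for all $y_1,y_2$ in the regular set.

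To extend the equality to pairs of points where one or both lie on the singular set $\{x=0\}$, I would use that in the Grushin model the distance to any point is the limit of distances to approximating interior points, either by invoking the description of $\dist$ as the length distance of the metric completion of the open halfplane, or by approximating minimizing horizontal curves via curves that avoid the singular line. Since $F_\lambda$ is a homeomorphism of $Y$, it is continuous in both variables, and the scaling identity passes to the limit.

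The main obstacle is this last continuity/approximation step: one must be sure that the singular line is not an obstruction to approximation by interior curves and that the Grushin distance coincides with the completion distance used when the model was identified with $Y$. This should follow from the explicit description of geodesics in the $2\alpha$-Grushin halfplane developed earlier in the paper (Sections~3.1--3.2), which shows in particular that any two points can be joined by a minimizing curve that is a uniform limit of interior curves whose lengths converge; combined with $F_\lambda^* g = \lambda^2 g$, this yields the desired identity on all of $Y\times Y$.
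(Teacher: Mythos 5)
Your proposal is correct and follows essentially the same route as the paper's proof of Theorem~\ref{dist}(3): one checks the pullback identity $F_\lambda^\star g_\infty=\lambda^2 g_\infty$ on the regular halfplane (using that, by parts (1)--(2) of that theorem, $\dist$ on $Y$ is the length distance induced by the incomplete metric $g_\infty$ on $(0,\infty)\times\mathbb{R}$), deduces that $F_\lambda$ is a dilation there, and extends to the singular line by continuity. Your extra care about the boundary extension is exactly the content of part (2) of the paper's statement, so there is no gap.
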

See Theorem~\ref{dist} for more detail.

For Riemannian manifolds, only the standard Euclidean space $\mathbb R^n$ has such dilation property. For Alexandrov spaces with metric dilations, they must be metric cones. The Pan-Wei examples are the first Ricci limit spaces with dilations that are not metric cones. Note that in \cite{Menguy} a (nonpolar) Ricci limit space is constructed as a tangent cone at infinity which is invariant under certain scales, but not all scales.   We also mention that the Carnot groups admit dilations, but these spaces are not RCD spaces; they even cannot be $\mathrm{CD} (K,N)$ for any $K \in \mathbb R$, $N \ge 1$ \cite{Juillet2021}. 
	
Even with the explicit metric and measure, computing the heat kernel is still a challenge. By Li-Yau's estimate \eqref{eq:gauusian}, the study of the asymptotic rate (although not the precise asymptotic) can be reduced to that of the measure of small balls instead of the heat kernel.  Still, the measure of balls cannot be computed explicitly; instead we give good effective estimates with the help of explicit description of the metric and measure. To get a grip on the small time asymptotic of the heat kernel,  we consider the ratios of the measure of balls to suitable rectangular shaped domains whose measures can be computed explicitly.  With sophisticated use of the dilation property we obtain formulas for integrals of the heat kernel in terms of certain functions, some explicit and some not, but with good control of asymptotics. These allow us to estimate and compute several limits playing important role for later study of  asymptotic behaviors;  see Theorem~\ref{int-heat-Y}.  These are carried out in \S 3.3-3.5. 
	
The space $Y$ above is a noncompact $\RCD(0,N)$ space. To construct the compact  $\RCD(-1,N)$ space $X$, first we study the heat kernel of quotient $\bar Y = Y/\mathbb Z$ by extending the relation of heat kernels between covering spaces for manifolds in \cite{L} to $\RCD$ space, see Proposition~\ref{prop:heat-cover}. For $\alpha = \frac 12$, the eigenvalues and eigenfunctions of $\bar Y$ with respect to the unweighted measure have been computed explicitly in \cite{Boscain-PS2016}.  Amazingly in this case the eigenvalues for weighted measure with $n =9$ turns out to be exactly the same as in the unweighted case, see Subsection 4.2. In this case $\bar Y$ is $\RCD(0,10)$ ($10$ is the smallest to have $\RCD$ $0$ lower bound. See Remark \ref{bakryemery}). 
Then we do a  perturbation of $\bar Y$, and a doubling to obtain $X$.   By keeping track of the heat kernels in each step, we  show that the small time asymptotic of the heat kernel of $X$ is essentially determined by that of  the heat kernel of $Y$, see Theorem~\ref{int-heat-X}.  

Applying the above while changing the geometric parameters appeared in the process, we finally establish the results above. 

Naively, for a metric measure space, the short time behavior of the heat kernel depends more on the distance/metric as compared to the large time behavior,  where the measure character shows up more.  Therefore it is natural to make the following conjecture. 
\begin{conj}
	For a compact $\RCD(K,N)$ space $(X, \dist, \meas)$ with Hausdorff dimension $\beta$, the limit  $\lim_{t \rightarrow 0} t^{\beta/2} \int_X \Heat (x,x,t) d\meas (x)$ is the Hausdorff measure of $X$ up to multiplication by a ``canonical'' constant. 
\end{conj}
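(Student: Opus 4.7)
The plan is to prove a pointwise asymptotic for the diagonal heat kernel and then pass to the limit under the integral, mirroring the explicit computations carried out for the $\alpha$-Grushin halfplane in Section 3. The conjecture is stated directly in terms of the heat trace, so Karamata \eqref{taubTer} is not needed for the statement itself; it only links the heat trace asymptotic to Weyl's law.

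The first step is to establish, for $\mathcal{H}^{\beta}$-a.e.\ $x\in X$, the pointwise limit
$$\lim_{t\to 0^+} t^{\beta/2}\,\Heat(x,x,t) = \frac{c_\beta}{\theta_\beta(x)},\qquad \theta_\beta(x):=\lim_{r\to 0^+}\frac{\meas(B_r(x))}{r^\beta},$$
where $c_\beta$ is a canonical constant coming from the local model and $\theta_\beta(x)\in(0,\infty)$ on a set of full $\mathcal{H}^\beta$-measure. The Gaussian heat kernel estimate on $\RCD(K,N)$ spaces, $\Heat(x,x,t)\asymp 1/\meas(B_{\sqrt t}(x))$, already gives the correct order; the identification of the exact leading constant relies on (i) existence of a pmGH tangent at $x$, (ii) a metric-measure rescaling argument showing that $t^{\beta/2}\Heat(x,x,t)$ is asymptotic to the unit-time diagonal of the tangent heat kernel, and (iii) an identification of that tangent quantity in terms of $\theta_\beta(x)$. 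On the set where $\theta_\beta\in(0,\infty)$, the function $1/\theta_\beta$ is precisely the Radon--Nikodym density $d\mathcal{H}^\beta/d\meas$, so integrating the pointwise limit against $d\meas$ yields $c_\beta\,\mathcal{H}^\beta(X)$ as desired.

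The interchange of limit and integral should follow from the Gaussian majorant $t^{\beta/2}\Heat(x,x,t)\le C\,t^{\beta/2}/\meas(B_{\sqrt t}(x))$, Bishop--Gromov volume comparison, and a Vitali-type covering argument combined with a uniform lower bound on $\theta_\beta$ on a set of positive $\mathcal{H}^\beta$-measure, yielding uniform integrability and dominated convergence. The principal obstacle is the pointwise step at $\mathcal{H}^\beta$-generic points when $\beta$ strictly exceeds the rectifiable dimension $n$. In that case $\mathcal{H}^\beta$ is concentrated on the singular set $\Ss$, and there is at present no classification of pmGH tangents at $\mathcal{H}^\beta$-a.e.\ singular point, nor any universal formula for $c_\beta$. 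One natural attack is to stratify $\Ss$ by tangent-isometry type and promote the explicit computation of Theorem \ref{int-heat-Y} stratum by stratum; a cleaner but more ambitious route is to establish that at $\mathcal{H}^\beta$-a.e.\ point the tangent is unique and carries metric dilations (as the Pan--Wei example does globally), in which case self-similarity of the tangent heat kernel automatically produces the pointwise limit, with $c_\beta$ read off from the tangent model.
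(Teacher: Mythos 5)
This statement is a \emph{conjecture}: the paper offers no proof of it (the authors explicitly defer it to future work), so there is nothing to compare your argument against except the paper's worked examples --- and those examples show that your central mechanism cannot work. The decisive flaw is the step ``integrating the pointwise limit against $d\meas$ yields $c_\beta\,\mathcal{H}^\beta(X)$.'' In the regime the conjecture is actually about, namely $\beta$ strictly larger than the rectifiable dimension $n$ (as in Theorem \ref{thm1} with $\beta=1+2\alpha>2=n$), the measures $\meas$ and $\mathcal{H}^\beta$ are mutually singular: $\meas$ is concentrated on $\mathcal{R}_n$, where $\theta_\beta(x)=\lim_r \meas(B_r(x))/r^\beta=\infty$ for $\meas$-a.e.\ $x$, while $\mathcal{H}^\beta$ is concentrated on the $\meas$-null singular set $\mathcal{S}$. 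Consequently $t^{\beta/2}\Heat(x,x,t)\to 0$ for $\meas$-a.e.\ $x$ (this is exactly what Theorem \ref{int-heat-Y} records: the trace integral over any compact subset of $\mathcal{R}$ tends to $0$ after scaling by $s^k$), so the integral of the pointwise limit against $d\meas$ is $0$, not $c_\beta\,\mathcal{H}^\beta(X)>0$. There is no Radon--Nikodym density $d\mathcal{H}^\beta/d\meas$ to speak of, and dominated convergence/uniform integrability is precisely what \emph{fails}: the trace mass escapes to the singular set. This is why the paper's Theorem \ref{thm:weyl} is formulated as weak convergence of the measures $s^k\Heat(x,x,s^2)\,d\meas$ to $c\,1_{\mathcal{S}}\,d\mathcal{H}^k$, proved by box-by-box estimates exploiting the dilation structure, rather than as pointwise convergence of the integrand plus an interchange of limits.

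Two further points. First, the conjecture explicitly includes the case $\mathcal{H}^\beta(X)=\infty$ (realized at $\alpha=\tfrac12$, where the trace grows like $\tfrac{1}{2\pi}(-\log s)\cdot s^{-2}$ and every neighborhood of $\mathcal{S}$ has infinite $\mathcal{H}^2$-measure, Remark \ref{rem:infinite}); a dominated-convergence framework says nothing there. Second, you do honestly flag the absence of a tangent classification at $\mathcal{H}^\beta$-a.e.\ singular point, but that is not the only missing ingredient: even with a complete classification of dilation-invariant tangents, one would still need a concentration/localization argument replacing your interchange of limit and integral --- some analogue of the parametrix gluing and rectangle estimates of Sections 3.3--3.5 and 4.3 --- to convert pointwise tangent information into the asymptotics of $\int_X\Heat(x,x,t)\,d\meas$. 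As written, your sketch would prove the regular Weyl's law of Theorem \ref{R-weyl} (where $\mathcal{H}^n$ and $\meas$ are mutually absolutely continuous on $\mathcal{R}_n^*$ and $L^1$-convergence of $s^n\Heat(x,x,s^2)$ does hold), but it cannot reach the singular cases that motivate the conjecture.
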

Note that the conjecture includes the case when $\mathcal{H}^{\beta}(X)=\infty$. Actually for the example $(X, \dist, \meas)$ in Theorem \ref{thmsingw}, the regular part of any open ball at any singular point has infinite $2$-dimensional Hausdorff measure; see Remark \ref{rem:infinite}. This property has an independent interest and of course, gives supporting evidence for the conjecture. We plan to show the valid of the conjecture for a large class of $\RCD$ spaces in a future paper.

Finally in connection with the validity of  Weyl's law, we also deal with the asymptotic behavior of eigenfunctions in \S \ref{regw}. In particular, after introducing a negative answer to a question by Ding in \cite{Ding02}, we provide related Weyl type asymptotics. See Theorem \ref{prop:ding general} and Proposition \ref{weyl eig2}.

\textit{Acknowledgments.} The first named author thanks Junrong Yan for useful discussions. The second named author wishes to thank Atsushi Katsuda and Tatsuya Tate for explaining him on the quantum ergodicity, in particular \cite{JZ}. Moreover he is very grateful to Naotaka Kajino and Takashi Kumagai for suggestions on Theorem \ref{thmsingw} which help improve the result. The third named author would like to thank Richard Montgomery for introducing him the Grushin planes.
Finally we wish to thank the referee for valuable suggestions for the revision.


\section{Preliminaries: Heat Kernels on $\RCD(K, N)$ spaces}\label{pre}

All terminologies for singular spaces we will discuss in the sequel are prepared from the theory of metric measure spaces with Ricci curvature bounded below in a synthetic sense (namely $\RCD(K, N)$ spaces).
Here we give a quick introduction on the terminology and results that will be needed later. See for instance \cite{Ambrosio} for a nice survey.

A triple $(X, \dist, \meas)$ is said to be a metric measure space if $(X, \dist)$ is a complete separable metric space and $\meas$ is a Borel measure on $X$, where this is finite on each bounded subset and the support coincides with $X$. Roughly speaking a metric measure space $(X, \dist, \meas)$ is said to be an $\RCD(K, N)$ space for some $K \in \mathbb{R}$ and some $N \in [1, \infty]$ if the Ricci curvature is bounded below by $K$ and the dimension is bounded above by $N$ in a synthetic sense, moreover the $H^{1,2}$-Sobolev space $H^{1,2}(X, \dist, \meas)$ is a Hilbert space. 

The precise definition is as follows. Define the Cheeger energy $\mathrm{Ch}:L^2(X, \meas) \to [0, \infty)$ by
\begin{equation}\label{eq:1110}
\mathrm{Ch}(f):=\inf_{\{f_i\}_i}\left\{\liminf_{i\to \infty}\int_X(\mathrm{Lip}f_i)^2d\meas\right\},
\end{equation}
where the infimum of the right hand side above runs over all bounded Lipschitz functions $f_i \in L^2(X, \meas)$ $L^2$-converging to $f$ on $X$ and $\mathrm{Lip}f(x)$ denotes the local slope of $f$ at $x$ (see also \cite{Cheeger2}). Then the $H^{1,2}$-Sobolev space $H^{1,2}(X, \dist, \meas)$ is defined by the finiteness domain of $\mathrm{Ch}$. 
We denote by $|\nabla f| \in L^2(X, \meas)$ the minimal object as in the right hand side of (\ref{eq:1110}), so-called the minimal relaxed slope of $f$, then we know the following equality
\begin{equation}
\mathrm{Ch}(f)=\int_X|\nabla f|^2d\meas.
\end{equation}
Note that in general $H^{1,2}(X, \dist, \meas)$ equipped with the norm
\begin{equation}
\|f\|_{H^{1,2}}=\left(\|f\|_{L^2}^2+\mathrm{Ch}(f)\right)^{1/2}
\end{equation}
is a Banach space. We say that $(X, \dist, \meas)$ is infinitesimally Hilbertian (IH) if $H^{1,2}(X, \dist ,\meas)$ is Hilbert.
In the sequel let us assume that $(X, \dist, \meas)$ is IH.

Then for all $f_i \in H^{1,2}(X, \dist, \meas)(i=1,2)$, 
\begin{equation}
\langle \nabla f_1, \nabla f_2\rangle =\lim_{\epsilon \to 0}\frac{|\nabla (f_1+\epsilon f_2)|^2-|\nabla f_1|^2}{2\epsilon} \in L^1(X, \meas).
\end{equation}
is well-defined. The domain $D(\Delta)$ of the Laplacian $\Delta$ of $(X, \dist, \meas)$ is defined by the set of all $f \in H^{1,2}(X, \dist, \meas)$ such that there exists a unique $h \in L^2(X, \meas)$, denoted by $\Delta f$, with
\begin{equation}
\int_X\langle \nabla f, \nabla \phi \rangle d\meas=-\int_Xh\phi d\meas,\quad \text{for any $\phi \in H^{1,2}(X, \dist, \meas)$.}
\end{equation} 
We are ready to introduce the heat flow $\heat_t:L^2(X, \meas) \to L^2(X, \meas)$. It is defined by a continuous curve $t \mapsto \heat_tf \in L^2(X, \meas)$ on $[0, \infty)$, which is locally absolutely continuous on $(0, \infty)$ (more strongly smooth, namely in $C^{\infty}((0, \infty), H^{1,2}(X, \dist, \meas))$, see \cite[Proposition 5.2.12]{GP}), with $\heat_tf \in D(\Delta)$ for $t>0$, $\heat_0f=f$ and 
\begin{equation}\label{eq:heat flow}
\frac{d}{dt} \heat_tf=\Delta \heat_tf,\quad \text{for $\mathcal{L}^1$-a.e. (or equivalently for any) $t>0$}.
\end{equation}
This will play the core role to define a main target of the paper, so-called the heat kernel below. 

We are now in a position to introduce the $\RCD(K, N)$ condition. An IH metric measure space $(X, \dist, \meas)$ is said to be an $\RCD(K, N)$ space for some $K \in \mathbb{R}$ and some $N \in [1, \infty]$ if  the following three conditions are satisfied.
\begin{itemize}
\item{(Volume growth)} There exist $C>1$ and $x \in X$ such that $\meas (B_r(x))\le Ce^{Cr^2}$ holds for any $r>1$.
\item{(Sobolev-to-Lipschitz)} If $f \in H^{1,2}(X, \dist, \meas)$ satisfies $|\nabla f|(x) \le 1$ for $\meas$-a.e. $x \in X$, then $f$ has a $1$-Lipschitz representative.
\item{(Bochner inequality)} It holds that
\begin{equation}
\frac{1}{2}\Delta |\nabla f|^2 \ge \frac{(\Delta f)^2}{N}+\langle \nabla \Delta f, \nabla f\rangle +K|\nabla f|^2
\end{equation}
in a weak sense.
\end{itemize}
See for instance \cite{AmbrosioGigliSavare13, AmbrosioGigliSavare14, AmbrosioMondinoSavare, CavallettiMilman, G2,ErbarKuwadaSturm} for the details above. We often say that $X$ is an $\RCD(K, N)$ space, without noting the distance and measure, for short.

By Bishop-Gromov inequality (\cite[Theorem 5.31]{LottVillani} and \cite[Theorem 2.3]{Sturm06b}),  and a $(1,1)$-Poincar\'e inequality (\cite[Theorem 1]{Rajala}), it is known  from \cite[Proposition 2.3]{Sturm95} and \cite[Corollary 3.3]{Sturm96}  that there exists a unique continuous function $\Heat:X \times X \times (0, \infty) \to (0, \infty)$, so-called the heat kernel of $(X, \dist, \meas)$, such that 
\begin{equation}\label{eq:heat kernel}
\heat_tf(x)=\int_Xf(y)\Heat (x,y,t)d\meas(y)
\end{equation}
for any $f \in L^2(X, \meas)$.
Moreover by \cite[Theorem 1.2]{JiangLiZhang}, Li-Yau inequality is satisfied in this setting, namely we know  the following Gaussian estimates for $\Heat$: for any $\epsilon \in (0, 1)$
\begin{align}\label{eq:gauusian}
\frac{C_1^{-1}}{\meas (B_{\sqrt{t}}(x))}\exp \left( -\frac{\dist (x, y)^2}{(4-\epsilon)t}-C_2t\right) &\le \Heat (x,y, t) \nonumber \\
&\le \frac{C_1}{\meas (B_{\sqrt{t}}(x))}\exp \left( -\frac{\dist(x, y)^2}{(4+\epsilon)t}+C_2t\right)
\end{align}
for some $C_1=C_1(K, N, \epsilon)>1$ and some $C_2=C_2(K, N, \epsilon) \ge 0$. Moreover $C_2 =0$ when $K=0$. 

Furthermore, with \cite[Theorem 1.2]{Jiang2015} and \cite[Theorem 4]{Davies}  we have 
\begin{equation}\label{eq:grad gaus}
|\nabla_x\Heat (x,y, t)| \le \frac{C_1}{\sqrt{t}\meas (B_{\sqrt{t}}(x))}\exp \left( -\frac{\dist (x, y)^2}{(4+\epsilon)t}+C_2t\right),
\end{equation}
which shows that $\Heat$ is locally Lipschitz, and 
\begin{equation}\label{eq:lap gaussian}
\left|\Delta_x\Heat (x,y,t)\right|=\left|\frac{d}{dt}\Heat (x,y,t)\right| \le \frac{C_1}{t\meas (B_{\sqrt{t}}(x))}\left( -\frac{\dist (x, y)^2}{(4+\epsilon)t}+C_2t\right). 
\end{equation}
It is worth pointing out that if $K=0$, then $C_2$ can be chosen as $0$ (see \cite{JiangLiZhang} for the details) and that the heat flow can be extended to $L^p(X, \meas)$ for any $1\le p \le \infty$ by (\ref{eq:heat kernel}) based on the estimates above (or by density).

Note that under rescaling $(X, a\dist, b\meas)$ for some $a>0$ and some $b>0$, the space is an $\RCD(a^{-2}K, N)$ space and we have the following:
\begin{equation} \label{eq:heat scaling}
\Heat_{(X, a\dist, b\meas)}(x, y, t)=b^{-1}\Heat_{(X, \dist, \meas)}(x, y, a^{-2}t).
\end{equation}

When $(X, \dist, \meas)$ is compact,  denoting by $f_i$ a corresponding eigenfunction of $\lambda_i$ with $\|f_i\|_{L^2}=1$, the standard functional analysis allows us to show that $\{f_i\}_i$ (can be chosen to) form an $L^2$-orthonormal basis of $L^2(X, \meas)$  and that 
\begin{equation}\label{eq:heat exp}
\Heat (x,y,t)=\sum_ie^{-\lambda_it}f_i(x)f_i(y),\quad \text{in $C(X \times X)$}
\end{equation}
is satisfied.

Finally let us mention that the all objects above are preserved by isomorphisms of metric measure spaces. In particular if
$T: (X_1, \dist_1, \meas_1) \rightarrow (X_2, \dist_2, \meas_2)$ is an isomorphism, namely isometry and measure preserving, between $\RCD(K, N)$ spaces, then 
\begin{equation} \label{eq:heat isom}
 \Heat_{(X_2, \dist_2, \meas_2)}(T(x), T(y), t) = \Heat_{(X_1, \dist_1, \meas_1)}(x,y,t).  
\end{equation} 
\begin{rem}
The local isomorphism preserves the gradient operators  and the Laplacians because of their localities, however it does not preserve the heat kernel in general as discussed in \S \ref{subsec:weyl}.
\end{rem}
\section{Geometry of Pan-Wei's examples}

First we recall the construction of the Ricci limit space $Y$ in \cite{PanWei}. Consider 
$M=M^{n+1}=[0,\infty)\times_f \mathbb{S}^{n-1} \times_h \mathbb{S}^1$, with
$$f(r)=r(1+r^2)^{-\tfrac 14} \sim \sqrt{r},\quad h(r)=(1+r^2)^{-\alpha} \sim r^{-2\alpha} \ \ \mbox{as} \ r \rightarrow \infty, $$ where $\alpha>0$. Then $M$ is a Riemannian manifold diffeomorphic to $\mathbb{R}^n\times \mathbb{S}^1$.
When integer $n\ge \max\{4\alpha+3, 16\alpha^2+8\alpha+1\}$, $M$ has positive Ricci curvature.

 Let $p\in M$ be a point in $\{r=0\}$,  $(\widetilde{M},\tilde{p})$ be the universal cover of $(M,p)$. Let $Y$ be the asymptotic cone of $(\widetilde{M},\tilde{p})$. Then $Y$ is clearly an $\RCD(0, N)$ space with $N=n+1$.

\begin{thm}[Pan-Wei] \label{Pan-Wei example} When $\alpha\ge 1/2$, the Hausdorff dimension of the Ricci limit space $Y$ is $1+2\alpha$, which can be arbitrarily large by raising $\alpha$ and $n$, while the rectifiable dimension of $Y$ is $2$.  One can also construct a compact $\RCD(K, n+1)$ space $X$ with this feature for some negative $K$.
\end{thm}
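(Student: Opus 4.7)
The plan is to identify the asymptotic cone $Y$ explicitly as a metric measure space and then read off both dimensions from the resulting description. Writing the doubly warped metric as $g_M = dr^2 + f(r)^2 g_{\mathbb{S}^{n-1}} + h(r)^2\,d\theta^2$ and lifting to $\widetilde{M} = [0,\infty)\times\mathbb{S}^{n-1}\times\mathbb{R}$, I would take the pointed Gromov--Hausdorff limit of $(\widetilde{M}, r_i^{-1}\dist, \tilde p)$ as $r_i\to\infty$. The $\mathbb{S}^{n-1}$-fiber collapses since $f(r_i\rho)/r_i \sim \sqrt{\rho/r_i}\to 0$. For the lifted $\mathbb{R}$-fiber, rescaling $y = \tilde\theta/r_i^{1+2\alpha}$ turns $h(r_i\rho)^2\,d\tilde\theta^2 \sim (r_i\rho)^{-4\alpha}r_i^{2+4\alpha}\,dy^2$, which after dividing by $r_i^2$ becomes $\rho^{-4\alpha}\,dy^2$. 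Hence the limit metric is
\[
g_Y = d\rho^2 + \rho^{-4\alpha}\,dy^2 \quad\text{on}\quad [0,\infty)\times\mathbb{R},
\]
the $2\alpha$-Grushin halfplane. Rescaling the Riemannian volume $f(r)^{n-1}h(r)\,dr\,d\sigma\,d\tilde\theta$ accordingly and integrating out $\sigma\in\mathbb{S}^{n-1}$ yields a limit reference measure $\meas = c\,\rho^{(n-1)/2-2\alpha}\,d\rho\,dy$, whose weight vanishes at $\rho=0$ since $(n-1)/2 - 2\alpha > 0$ under the hypothesis $n\ge 4\alpha+3$.

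With this description the rectifiable dimension is immediate: at any $(\rho_0,y_0)$ with $\rho_0>0$ the metric is smoothly Riemannian (rescale $y$ by $\rho_0^{2\alpha}$) and the weight is smooth and positive, so tangent cones are $\mathbb{R}^2$ and every such point lies in $\mathcal{R}_2$. Since the singular axis is $\meas$-null, $\mathcal{R}_2$ has full $\meas$-measure and the essential dimension is $2$. For the Hausdorff dimension I would apply a ball--box estimate: in the Grushin halfplane, the metric ball $B_r(0,y_0)$ is contained in $\{0\le\rho\le Cr,\ |y-y_0|\le Cr^{1+2\alpha}\}$, and partitioning this box into strips $\rho\in[k\epsilon,(k+1)\epsilon]$ and counting $\epsilon$-balls in each strip shows that $N(\epsilon)\asymp r^{1+2\alpha}\epsilon^{-(1+2\alpha)}$ $\epsilon$-balls suffice to cover it when $2\alpha\ge 1$. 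This yields $\mathcal{H}^{1+2\alpha}$ positive and finite on compact neighborhoods of the singular axis, whereas the regular part has local Hausdorff dimension $2\le 1+2\alpha$. Hence $\dim_{\mathcal{H}}(Y) = 1+2\alpha$, which can be made arbitrarily large by raising $\alpha$ (and accordingly $n$).

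For the compact $\RCD(K,n+1)$ example $X$ with the same Hausdorff--rectifiable dimension gap, I would first quotient $Y$ by the integer translations $y\mapsto y+1$ to obtain $\bar Y = Y/\mathbb{Z}$, which remains $\RCD(0,n+1)$ but is still noncompact in $\rho$. Next I would perturb $\bar Y$ on the region $\{\rho\ge R\}$ by replacing its unbounded tail with a smooth Riemannian cap (smoothly matching the metric and weight at the seam $\rho = R$), and then double the resulting space across $\{\rho = R\}$ to obtain a closed $X$. The dimension computations survive because the modification happens away from the singular axis. To verify $\RCD(K,n+1)$ with $K<0$, one can realize $X$ as a pointed Gromov--Hausdorff limit of smooth closed Riemannian $(n+1)$-manifolds obtained from the Pan--Wei manifold $M$ by an analogous truncate--double--smooth procedure; since each approximant has $\mathrm{Ric}\ge K$ for a suitable $K<0$, $\RCD(K,n+1)$ is inherited by the limit via Cheeger--Colding stability. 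The main technical obstacle I foresee is quantitative: the doubling seam introduces negatively curved regions, and one must design the smoothing so that the Ricci lower bound $K$ is uniform in the truncation parameter; this forces $K<0$ and depends delicately on the warping profiles $f$ and $h$.
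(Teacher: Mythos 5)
Your proposal is correct and follows essentially the same route as the paper (and as \cite{PanWei}): collapse the $\mathbb{S}^{n-1}$ factor, rescale the lifted circle direction to identify $Y$ with the weighted $2\alpha$-Grushin halfplane (this is the content of Lemmas \ref{M to S}, \ref{S convergence} and \ref{measure density}), read off the rectifiable dimension from the smooth weighted structure off the axis, obtain the Hausdorff dimension from the snowflake axis together with a ball--box covering, and build the compact $X$ by quotienting, capping and doubling, with the $\RCD(K,n+1)$ bound inherited from smooth closed approximants with $\mathrm{Ric}\ge K<0$ as in \cite[Remark 1.8]{PanWei}. One small inaccuracy: at the borderline case $\alpha=1/2$ your strip count acquires a logarithm, $N(\epsilon)\asymp \epsilon^{-2}\log(1/\epsilon)$, and indeed $\mathcal{H}^{2}$ of any neighborhood of the axis is infinite (see Remark \ref{rem:infinite}), so the claim that $\mathcal{H}^{1+2\alpha}$ is \emph{finite} on compact neighborhoods holds only for $\alpha>1/2$; the conclusion $\dim_H Y=1+2\alpha$ is unaffected, since the logarithmic factor does not change the upper box dimension.
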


For more details, see \cite[Theorem A]{PanWei}; for construction of a compact $X$, see \cite[Remark 1.8]{PanWei} and \S \ref{heat=X}. Also see \cite{PanWei22}, where similar constructions give examples whose Busemann functions at a point are not proper.

In order to study the Weyl's law of $X$, we derive precise descriptions of the distance, metric, measure of $Y$  in the subsections below, extending the distance estimate in \cite{PanWei}.

\subsection{Metric and dilations of $Y$}

Let $S$ be a surface of revolution $[0,\infty)\times_h \mathbb{S}^1$ with boundary and let $\widetilde{S}$ be its universal cover. Let $q\in S$ at $r=0$ and let $\tilde{q}\in \widetilde{S}$ be a lift of $q$. 

\begin{lem}\label{M to S}
	Let $r_i\to\infty$ be a sequence. Then the two sequences $(r_i^{-1}\widetilde{M},\tilde{p})$ and $(r_i^{-1}\widetilde{S},\tilde{q})$ are equivalent in the pointed Gromov-Hausdorff topology, namely, pointed Gromov-Hausdorff distances between $(r_i^{-1}\widetilde{M},\tilde{p})$ and $(r_i^{-1}\widetilde{S},\tilde{q})$ go to $0$ as $i \to \infty$.
\end{lem}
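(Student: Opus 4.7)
The plan is to use the natural projection $\pi \colon \widetilde{M} \to \widetilde{S}$ that collapses the $\mathbb{S}^{n-1}$-fibers, and to show that, once we rescale by $r_i^{-1}$, it becomes an $\varepsilon_i$-Gromov--Hausdorff approximation on each fixed ball, with $\varepsilon_i \to 0$. Because $\mathbb{S}^{n-1}$ is simply connected (we have $n \ge 4\alpha+3 \ge 5$), the universal cover of $M$ is the doubly warped product $\widetilde{M} = [0,\infty) \times_f \mathbb{S}^{n-1} \times_h \mathbb{R}$ with line element $dr^2 + f(r)^2 g_{\mathbb{S}^{n-1}} + h(r)^2 d\phi^2$, while $\widetilde{S} = [0,\infty) \times_h \mathbb{R}$ carries $dr^2 + h(r)^2 d\phi^2$. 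I take $\pi(r,\theta,\phi) = (r,\phi)$, which sends $\tilde p$ to $\tilde q$.

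The key step is to prove the two-sided distance comparison
\begin{equation*}
d_{\widetilde{S}}(\pi(x),\pi(y)) \le d_{\widetilde{M}}(x,y) \le d_{\widetilde{S}}(\pi(x),\pi(y)) + \pi f(r_y),
\end{equation*}
where $r_y$ is the radial coordinate of $y$. The left inequality is immediate from comparing line elements: any curve in $\widetilde{M}$ projects to a curve in $\widetilde{S}$ of no greater length. For the right inequality I would lift a minimizing $\widetilde{S}$-geodesic from $\pi(x)$ to $\pi(y)$ horizontally at $\theta \equiv \theta_x$ (obtaining a path of the same length ending at $(r_y, \theta_x, \phi_y)$), and then concatenate with an arc on the round sphere $\{r = r_y\}$ of radius $f(r_y)$ joining $\theta_x$ to $\theta_y$, whose length is at most $\pi f(r_y)$.

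To turn this into a GH approximation, I would note that the radial projection $(r,\theta,\phi) \mapsto r$ is $1$-Lipschitz (the coefficient of $dr^2$ is $1$), so any $x \in B_{Rr_i}^{\widetilde{M}}(\tilde p)$ satisfies $r_x \le Rr_i$, and therefore
\begin{equation*}
\frac{\pi f(r_x)}{r_i} \le \frac{\pi R r_i}{r_i (1 + R^2 r_i^2)^{1/4}} \le \pi\sqrt{R/r_i} \longrightarrow 0
\end{equation*}
uniformly in $x$. Surjectivity of $\pi$ onto $B_R(\tilde q) \subset r_i^{-1}\widetilde{S}$ is trivial, since every $(r,\phi) \in \widetilde{S}$ is $\pi(r,\theta_0,\phi)$ for any choice of $\theta_0$, and $1$-Lipschitzness keeps such a preimage in the corresponding ball of $r_i^{-1}\widetilde{M}$. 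Hence $\pi$ restricted to $B_R(\tilde p) \subset r_i^{-1}\widetilde{M}$ is an $\varepsilon_i(R)$-GH approximation onto $B_R(\tilde q) \subset r_i^{-1}\widetilde{S}$ with $\varepsilon_i(R) \to 0$, which is precisely what is needed. The conceptual heart of the argument is the upper distance bound together with the sub-linear growth $f(r) \sim \sqrt{r}$: it is this sub-linearity that forces the $\mathbb{S}^{n-1}$-fiber to collapse under rescaling by $r_i^{-1}$, whereas a linear $f$ would have prevented any such collapsing.
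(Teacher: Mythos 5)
Your proof is correct and rests on the same geometric fact as the paper's: the $\mathbb{S}^{n-1}$-fibers of $\widetilde{M}$ have diameter at most $\pi f(r)\sim \pi\sqrt{r}=o(r)$, so they collapse under rescaling by $r_i^{-1}$. The only difference is packaging: the paper realizes $\widetilde{S}$ as a convex, isometrically immersed copy inside $\widetilde{M}$ that is $f(r)$-dense in each level set $\widetilde{M}_r$, whereas you run the dual argument via the fiber-collapsing projection $\pi\colon\widetilde{M}\to\widetilde{S}$ together with an explicit two-sided distance comparison, which has the mild advantage of deriving the lower distance bound from the $1$-Lipschitz projection rather than from the (asserted) convexity of the embedded surface.
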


\begin{proof}
	We write $(r,v)$ as a point in $S$, where $r\in[0,\infty)$ and $v\in \mathbb{S}^1=[-1/2,1/2]/\sim$. This also naturally defines an $(r,v)$-coordinate on $\widetilde{S}$, where $r\in[0,\infty)$ and $v\in \mathbb{R}$, such that the preimage of $(r,0)\in S$ is
	$\{(r,v)|v\in\mathbb{Z}\}.$
	Recall that the Riemannian metric on $S$ is given by 
	$g_S=dr^2 + h(r)^2 dv^2;$
	this expression also gives the Riemannian metric on $\widetilde{S}$. 
	
	On $M$, we can write each point in the form of $(r,x,v)$, where $r\in[0,\infty)$, $x\in \mathbb{S}^{n-1}$, and $v\in \mathbb{S}^1$. From $g$ on $M$, it is clear that
	$$\dist^g((r,x,v),(r,x',v))\le f(r) \dist_{n-1}(x,x'),$$
	where $\dist_{n-1}$ is the standard distance on the unit sphere $\mathbb{S}^{n-1}$. We fix a point $x_0 \in \mathbb{S}^{n-1}$ and define an immersion $$\phi: S \to M ,\quad (r,v) \mapsto (r,x_0,v).$$
	Note that $\phi$ is indeed a Riemannian immersion and $\phi(S)$ is convex. For $r>0$, we write
	$$S_r=\{(r,v)| v\in \mathbb{S}^1\},\quad M_r=\{(r,x,v)|x\in S^{n-1},v\in S^1\}.$$
	$\phi(S_r)$ is $f(r)$-dense in $M_r$.
	
	Similar to $\widetilde{S}$, we can naturally assign an $(r,x,v)$-coordinate on $\widetilde{M}$. We also write
	$$\widetilde{S}_r=\{(r,v)\in\widetilde{S}| v\in \mathbb{R}\},\quad \widetilde{M}_r=\{(r,x,v)\in\widetilde{M}|x\in \mathbb{S}^{n-1},v\in \mathbb{R}\}.$$
	The map $\phi$ lifts to a Riemannian immersion
	$$\tilde{\phi}:\widetilde{S}\to \widetilde{M},\quad (r,v)\mapsto (r,x_0,v)$$
	with a convex image. Moreover, $\tilde{\phi}(\widetilde{S}_r)$ is $f(r)$-dense in $\widetilde{M}_r$. Now the result follows from the fact that $f(r)\sim \sqrt{r}$ as $r\to \infty$. 
\end{proof}

\begin{lem}\label{S convergence}
	We write $\widetilde{S}_{+}=\{(r,v)\in\widetilde{S}|r>0,v\in \mathbb{R}\}$ and set a reference point $z=(1,0)\in\widetilde{S}_{+}$. Then as $\lambda\to\infty$, $(\widetilde{S}_+,\lambda^{-2}g_{\widetilde{S}},(1,0))$ converges (smoothly on each compact subset of $(0,\infty)\times \mathbb{R}$) to a limit (incomplete) Riemannian metric
	\begin{equation}
	g_\infty=dr^2+r^{-4\alpha} dv^2
	\end{equation}
	defined on $(0,\infty)\times \mathbb{R}$; moreover, the reference point $z$ converges to $(0,0)$, a point in the metric completion of $g_\infty$.
\end{lem}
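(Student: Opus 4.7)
The plan is to exhibit explicit scaling diffeomorphisms that realize the asserted smooth convergence, reducing the lemma to a straightforward asymptotic calculation of the rescaled coefficient.

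First I would introduce the family of dilations $\Phi_\lambda : (0,\infty) \times \mathbb{R} \to \widetilde{S}_+$ defined by $\Phi_\lambda(\tilde r, \tilde v) = (\lambda \tilde r,\, \lambda^{1+2\alpha}\tilde v)$. The scalings are forced: the factor $\lambda$ on $r$ is dictated by matching the $dr^2$ term after multiplying by $\lambda^{-2}$, and the exponent $1+2\alpha$ on the $v$-scale is then determined by requiring the large-$r$ behavior of $h(r)^2\,dv^2 = (1+r^2)^{-2\alpha}\,dv^2$ to produce the coefficient $\tilde r^{\,-4\alpha}$ in the limit. A direct substitution gives
\[
\Phi_\lambda^{*}\!\bigl(\lambda^{-2} g_{\widetilde S}\bigr) \;=\; d\tilde r^{\,2} + \lambda^{4\alpha}(1+\lambda^2 \tilde r^{\,2})^{-2\alpha}\, d\tilde v^{\,2} \;=\; d\tilde r^{\,2} + \bigl(\tilde r^{\,2}+\lambda^{-2}\bigr)^{-2\alpha}\, d\tilde v^{\,2},
\]
where the second equality is just factoring out $\lambda^{4\alpha}$ inside the power.

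Next I would verify $C^\infty$-convergence of this pullback to $g_\infty = d\tilde r^{\,2} + \tilde r^{\,-4\alpha}\, d\tilde v^{\,2}$ on every compact $K \subset (0,\infty) \times \mathbb{R}$. Since $K$ is bounded away from $\tilde r = 0$, the function $\bigl(\tilde r^{\,2}+\lambda^{-2}\bigr)^{-2\alpha}$ converges uniformly on $K$ to $\tilde r^{\,-4\alpha}$ with error $O(\lambda^{-2})$; the partial derivatives of all orders in $\tilde r$ (the $\tilde v$-derivatives vanish) are polynomial expressions in $\tilde r$ and $(\tilde r^{\,2}+\lambda^{-2})^{-1}$, and the same uniform comparison applies term by term, giving $C^k$-convergence for every $k$. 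This establishes the claimed smooth convergence of metrics on compact subsets.

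Finally I would address the basepoint. Under $\Phi_\lambda$, the reference point $(1,0) \in \widetilde{S}_+$ corresponds to $\Phi_\lambda^{-1}(1,0) = (\lambda^{-1},0)$, which converges to $(0,0)$ as $\lambda \to \infty$. To see that $(0,0)$ belongs to the metric completion of $\bigl((0,\infty) \times \mathbb{R}, g_\infty\bigr)$, it suffices to note that the radial segment $\tilde r \mapsto (\tilde r,0)$ has $g_\infty$-length $\int_0^\varepsilon d\tilde r = \varepsilon$, so $(\tilde r_n,0)$ is a Cauchy sequence whenever $\tilde r_n \to 0$. The proof is essentially a bookkeeping computation, so there is no serious obstacle; the only point worth flagging is that the line $\{\tilde r = 0\}$ is genuinely singular for $g_\infty$ (the coefficient $\tilde r^{\,-4\alpha}$ blows up, precluding any smooth extension), which is why the convergence must be restricted to the open half-plane and the basepoint is described as lying in the completion rather than in the Riemannian manifold itself. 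This singular boundary behavior is exactly the feature that will later manifest as the Grushin-type geometry of $Y$.
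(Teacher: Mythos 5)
Your proof is correct and follows essentially the same route as the paper: the paper performs exactly the change of variables $s=\lambda^{-1}r$, $w=\lambda^{-1-2\alpha}v$ (the inverse of your $\Phi_\lambda$) and arrives at the same rescaled metric $ds^2+((1+\lambda^2s^2)/\lambda^2)^{-2\alpha}dw^2$ with basepoint $(\lambda^{-1},0)$, then lets $\lambda\to\infty$. Your additional verification of uniform $C^k$-convergence on compacta away from $\{\tilde r=0\}$ and of the basepoint lying in the metric completion simply fills in details the paper leaves implicit.
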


\begin{proof}
	For convenience, we write $\lambda^{-2}g_{\widetilde{S}}=g_\lambda$ below. We apply a change of variables $s=\lambda^{-1}r$ and $w=\lambda^{-1-2\alpha}v$. Then
	\begin{align*}
		g_\lambda&=\lambda^{-2}(dr^2+(1+r^2)^{-2\alpha}dv^2)\\
		&=ds^2+\left(\dfrac{1+\lambda^2s^2}{\lambda^2}\right)^{-2\alpha} dw^2.
	\end{align*}
	In other words, the space $(\widetilde{S}_+,g_\lambda,z)$ is isometric to 
	$$((0,\infty)\times \mathbb{R},ds^2+((1+\lambda^2s^2)/\lambda^2)^{-2\alpha}dw^2,(\lambda^{-1},0)).$$
	Let $\lambda\to\infty$, then the result follows.
\end{proof}

\begin{rem}\label{rem:change}
The coordinate change $y=(\frac{r}{1+2\alpha})^{1+2\alpha}, x=\frac{v}{(1+2\alpha)^{2\alpha}}$ turns the metric into the form
$$
\frac{dx^2+dy^2}{y^{2c}}, \ \ \ \ \ \ c=\frac{2\alpha}{1+2\alpha}.
$$
Note that the hyperbolic metric corresponds to $c=1$.
\end{rem}

\begin{thm}\label{dist}
	$\widetilde{M}$ has a unique asymptotic cone $(Y,y)$ with the following properties:\\
	(1) $Y$ has an $(r,v)$-coordinate, where $(r,v)\in[0,\infty)\times\mathbb{R}$, and $y=(0,0)$;\\
	(2) the distance function $\dist$ on $Y$ is induced by the incomplete Riemannian metric $g_\infty=dr^2+ r^{-4\alpha} dv^2$ defined on $(0,\infty)\times \mathbb{R}$;\\
	(3) for each $\lambda>0$, the map $F_\lambda: (r,v)\mapsto (\lambda r, \lambda^{1+2\alpha }v)$ is a metric dilation of $Y$ with scale $\lambda$, that is,
	$$\dist (F_\lambda(y_1),F_\lambda(y_2))=\lambda\cdot \dist (y_1,y_2)$$
	for all $y_1,y_2\in Y$; \\
	(4) $\dist ((0,v),(0,0))=C|v|^{\frac{1}{1+2\alpha}}$ for all $v\in\mathbb{R}$, where $C>0$ is a constant.
\end{thm}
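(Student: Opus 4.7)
My plan is to reduce to the surface of revolution $\widetilde{S}$ via Lemma~\ref{M to S}, upgrade the smooth convergence of Lemma~\ref{S convergence} to a pointed Gromov--Hausdorff statement about metric completions, and then derive (3) and (4) directly from the explicit form of $g_\infty$.

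For (1)--(2), Lemma~\ref{M to S} shows that the pmGH distance between $(\lambda^{-1}\widetilde{M},\tilde p)$ and $(\lambda^{-1}\widetilde{S},\tilde q)$ tends to zero, so the two families share their pmGH subsequential limits. Nonnegative Ricci on $\widetilde M$ together with Gromov precompactness yields existence of such a limit, and I only need to identify every subsequential limit $(Y,y)$ with the metric completion of $((0,\infty)\times\mathbb{R},g_\infty)$ based at $(0,0)$, independent of the rescaling sequence. The isometric change of variables in the proof of Lemma~\ref{S convergence} rewrites $(\widetilde{S}_+,\lambda^{-2}g_{\widetilde S})$ as $\bigl((0,\infty)\times\mathbb{R},ds^2+((1+\lambda^2s^2)/\lambda^2)^{-2\alpha}dw^2\bigr)$ and sends $\tilde q=(0,0)\in\widetilde S$ to $(s,w)=(0,0)$ for every $\lambda$; this metric converges smoothly to $g_\infty$ on each compact subset of the open half plane. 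Hence $Y$ contains an open dense subset isometric to $((0,\infty)\times\mathbb{R},g_\infty)$ and must equal its metric completion. To pin the boundary down I would use the explicit comparison path $(0,0)\to(t,0)\to(t,v)\to(0,v)$ in the completion, whose $g_\infty$-length is $2t+|v|t^{-2\alpha}$: this bounds the limiting distance from above uniformly in $\lambda$, while the matching lower bound comes from Lemma~\ref{S convergence} applied to paths with intermediate radius bounded away from $0$. Uniqueness of the limit is then automatic since $g_\infty$ does not depend on the rescaling sequence.

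For (3), a direct pullback gives
\begin{equation*}
F_\lambda^* g_\infty = (\lambda\,dr)^2 + (\lambda r)^{-4\alpha}(\lambda^{1+2\alpha}\,dv)^2 = \lambda^2\bigl(dr^2 + r^{-4\alpha}\,dv^2\bigr) = \lambda^2 g_\infty,
\end{equation*}
so $F_\lambda$ is a $\lambda$-homothety of $g_\infty$ on $(0,\infty)\times\mathbb{R}$ and, by continuous extension to the metric completion, of $\dist$ on $Y$. For (4), $v\mapsto -v$ is a $g_\infty$-isometry, so it suffices to treat $v>0$; applying (3) with $\lambda=v^{1/(1+2\alpha)}$ to the pair $\{(0,0),(0,1)\}$ yields $\dist((0,v),(0,0))=C\,v^{1/(1+2\alpha)}$ with $C:=\dist((0,1),(0,0))$, and finiteness of $C$ follows by minimizing $2t+t^{-2\alpha}$ over $t>0$ along the comparison path, while positivity is immediate since $(0,1)\neq(0,0)$ in the completion. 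The main obstacle throughout is the boundary analysis, because Lemma~\ref{S convergence} only gives smooth convergence on the open half-plane while $Y$ must include the entire singular axis $\{r=0\}$ and no spurious extra points; the asymmetric dilation factor $\lambda^{1+2\alpha}$ in the $v$-direction is precisely what makes this bookkeeping nontrivial.
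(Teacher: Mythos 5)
Your proposal is correct and follows essentially the same route as the paper: (1)--(2) are reduced to Lemmas~\ref{M to S} and \ref{S convergence}, (3) is the pullback identity $F_\lambda^*g_\infty=\lambda^2 g_\infty$ extended by continuity, and (4) applies the dilation with $\lambda=|v|^{1/(1+2\alpha)}$ to the pair $\{(0,0),(0,1)\}$. The only difference is that you spell out the boundary/completion bookkeeping for (1)--(2) (comparison paths, uniqueness of the limit) which the paper leaves implicit; that is a legitimate filling-in of detail, not a different argument.
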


\begin{proof}
	(1) and (2) follow directly from Lemmas \ref{M to S} and \ref{S convergence}. 
	
	We prove (3). It is direct to check that $F_\lambda$ satisfies
	$$F_\lambda^\star g_\infty=\lambda^2 g_\infty.$$
	Therefore, $F_\lambda$ is a metric dilation with scale $\lambda$ on $(0,\infty)\times \mathbb{R}$. We extend $F_\lambda$ to $Y$ by continuity. The result follows.
	
	For (4), we assume $v>0$ without lose of generality. Applying the metric dilation $F_\lambda$ with $\lambda=v^{\frac{1}{1+2\alpha}}$, we have
	$$\dist ((0,v),(0,0))=\dist (F_\lambda(0,1),(0,0))=\lambda \dist((0,1),(0,0)).$$
\end{proof}

We make some remarks related to Theorem \ref{dist}.

\begin{rem}
	It follows from the metric dilations in Theorem \ref{dist}(3) that $Y$ is scaling invariant, that is, $(sY,y)$ is isometric to $(Y,y)$ for all $s>0$. In particular, the tangent cone of $Y$ at $y$ and the one at infinity are both unique and isometric to $Y$.
\end{rem}

\begin{rem}\label{rem:infinite}
Take a box $B=(0,1]\times [0,1]\subseteq \mathcal{R}$. Then
	$$\mathcal{H}^2(B)=\int_B r^{-2\alpha} drdv = \int_0^1 r^{-2\alpha} dr. $$
	The indefinite integral is finite if and only if $\alpha< 1/2$. Recall that $\dim_H(\mathcal{S})=1+2\alpha$ and $\dim_H(\mathcal{R})=2$. Thus in these examples, we always have
	$$\dim_H (\mathcal{S})<\dim_H(\mathcal{R}) \iff \mathcal{H}^2(B)<\infty.$$
\end{rem}

\begin{rem}\label{rem:counterexample}
	When $\alpha=1/2$, $Y$ has both Hausdorff dimension and rectifiable dimension $2$. Based on Remark \ref{rem:infinite}, we can see that $(Y,\dist,\mathcal{H}^2)$ does not satisfy the doubling condition. In fact, let $x=(1,0)\in Y$ expressed in the $rv$-coordinate and let $r=2/3$. Then by triangle inequality, $B_{2r}(x)$ contains the set $D=[0,1/3]\times [-(1/3C)^2,(1/3C)^2]$, where $C$ is the constant in Theorem \ref{dist}(4). $D$, thus $B_{2r}(x)$, has infinite $\mathcal{H}^2$ measure from Remark \ref{rem:infinite}. On the other hand, $\mathcal{H}^2(B_r(x))$ is finite. Therefore, there is no upper bound for the ratio
	$$\dfrac{\mathcal{H}^2(B_{2r}(x))}{\mathcal{H}^2(B_{r}(x))}.$$
	This answers \cite[Conjecture 1.34]{ChCo97} in the negative even in the case when the space is $2$-dimensional with the full support $\mathcal{H}^2$.
\end{rem}

\begin{rem}
	The Grushin plane is $\mathbb{R}^2$ with a subRiemannian metric (see \cite[Section 3.1]{Bel}). Under the $(x,y)$-coordinate, its distribution is generated by $X=\partial_x$ and $Y=|x| \partial_y$. Setting $\{X,Y\}$ orthonormal defines a subRiemannian metric on $\mathbb{R}^2$. Note that $Y$ degenerates only along the $y$-axis. Thus the distribution has full rank almost everywhere; a sub-Riemannian manifold with this property are called almost-Riemannian. One can also define the $\alpha$-Grushin planes by alternatively setting $Y=|x|^\alpha \partial_y$, where $\alpha>0$. Outside the $y$-axis, the sub-Riemannian metric becomes Riemannian and equals 
	$$g= dx^2 + |x|^{-2\alpha} dy^2.$$
	By Theorem \ref{dist}, our example $Y$ is isometric to the $2\alpha$-Grushin halfplane.
\end{rem}

\begin{rem}
It is conjectured in \cite{KKK} that any $\RCD(K,N)$ space with an upper curvature bound in the sense of Alexandrov, namely CAT$(\kappa)$ space, satisfies that the topological dimension coincides with the Hausdorff dimension. See \cite[Theorem 3.15]{KKK} and a sentence afterwords.
Since the metric structure of $Y$ is CAT$(0)$ (the proof is the same to that of a standard fact that any Hadamard manifold is a CAT$(0)$ space, because the smooth part of $Y$ has nonpositive sectional curvature and all two points in $Y$ can be joined by a unique minimal geodesic), $Y$ gives a counterexample to this conjecture.
\end{rem}

Let 
$$\gamma:Y\to Y,\quad (r,v)\mapsto (r,v+1),$$
which is an isometry of $Y$. We prove the following distance estimate which will be used later. 

\begin{lem} \label{r-distance}
	There is a function $C(r)>0$ such that
	$$C(r) l^{\frac{1}{1+2\alpha}} \le \dist(\gamma^l x,x)\le 3 l^{\frac{1}{1+2\alpha}}$$
	holds for all $x=(r,v)\in Y$ and all $l\in \mathbb{Z}_+$.
\end{lem}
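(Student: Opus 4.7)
The plan is to exploit the translation invariance of $g_\infty$ and the metric dilation $F_\lambda$ from Theorem \ref{dist} to reduce both bounds to properties of the single-variable function
\[
\rho(s) := \dist\bigl((s,1),(s,0)\bigr), \qquad s \in [0,\infty).
\]
First, since $g_\infty = dr^2 + r^{-4\alpha}\,dv^2$ has no $v$-dependence, the shift $(r,w)\mapsto (r,w-v)$ is an isometry of $Y$, so $\dist(\gamma^l x,x)=\dist((r,l),(r,0))$. Then, applying $F_\lambda$ with $\lambda = l^{1/(1+2\alpha)}$ — which maps $(r l^{-1/(1+2\alpha)},1)\mapsto (r,l)$ and $(r l^{-1/(1+2\alpha)},0)\mapsto (r,0)$ — yields the key scaling identity
\[
\dist(\gamma^l x, x) = l^{1/(1+2\alpha)}\, \rho\bigl(r\, l^{-1/(1+2\alpha)}\bigr).
\]
Since $l\ge 1$ forces the argument $r\, l^{-1/(1+2\alpha)}$ into $(0,r]$, it will suffice to prove $\rho(s)\le 3$ for all $s\ge 0$ and $\rho(s)\ge C(r)>0$ for all $s\in[0,r]$.

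For the upper bound I will exhibit two explicit piecewise paths and select the shorter one depending on $s$. If $s\le 1$, the broken path $(s,0)\to(1,0)\to(1,1)\to(s,1)$ consists of two horizontal segments each of length $1-s$ (the metric being Euclidean in $r$) and a vertical segment at height $r=1$ of length $\int_0^1 1^{-2\alpha}\,dv = 1$, for a total of $2(1-s)+1\le 3$. If $s>1$, a single vertical segment $(s,0)\to(s,1)$ has length $s^{-2\alpha}<1$. Either way $\rho(s)\le 3$.

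For the lower bound the idea is compactness: $\rho$ is continuous and strictly positive on $[0,\infty)$, so it attains a positive minimum $C(r):=\min_{s\in[0,r]}\rho(s)$ on the compact interval $[0,r]$. Positivity is immediate since $(s,0)\ne (s,1)$, and continuity on the open half-line $(0,\infty)$ is inherited from continuity of $\dist$. The only delicate point — and what I expect to be the main obstacle — is continuity at the boundary point $s=0$: one must use that the horizontal segments from $(s,j)$ to $(0,j)$, $j=0,1$, each have length $s$ in the metric completion of $Y$, so the triangle inequality gives $|\rho(s)-\rho(0)|\le 2s\to 0$ as $s\to 0^+$. Once this boundary continuity is secured, combining it with the scaling identity above delivers the claimed lower bound $C(r)\, l^{1/(1+2\alpha)}$.
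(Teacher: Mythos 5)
Your proof is correct, but it takes a genuinely different route from the paper's. The paper argues directly on $Y$: the upper bound comes from the same kind of detour path (go out to $r+s$ with $s=l^{1/(1+2\alpha)}$, traverse the $v$-direction, come back), but the lower bound is obtained by analyzing a minimal geodesic $c$ from $(r,0)$ to $(r,l)$ via the quantity $R=\max\{t-r:(t,v)\in c\}$, feeding the upper bound back into the inequality $\mathrm{length}(c)\ge (r+R)^{-2\alpha}l$ and splitting into the cases $2r\le C_1 l^{1/(1+2\alpha)}$ and $2r>C_1 l^{1/(1+2\alpha)}$; this produces the explicit constant $C(r)=\min\{C_1, C_2 r^{-2\alpha}\}$. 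You instead use the $v$-translation invariance of $g_\infty$ together with the dilation $F_\lambda$ of Theorem \ref{dist}(3) to obtain the exact identity $\dist(\gamma^l x,x)=l^{1/(1+2\alpha)}\rho(r\,l^{-1/(1+2\alpha)})$ with $\rho(s)=\dist((s,1),(s,0))$, which reduces everything to bounding $\rho$ on $[0,r]$: the upper bound by two explicit competitor paths, the lower bound by continuity and positivity of $\rho$ on a compact interval (your $2s$-Lipschitz estimate at $s=0$ is exactly what is needed to include the boundary point). What each buys: the paper's argument is fully quantitative and yields an explicit, computable $C(r)$, whereas yours is softer on the lower bound but conceptually cleaner, gives the sharper structural scaling identity for $\dist(\gamma^l x,x)$, and your $C(r)=\min_{[0,r]}\rho$ is automatically non-increasing in $r$, so it still provides the uniform bound over $0\le r\le r_0$ used later in the heat-kernel estimate for $\bar Y$.
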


\begin{proof}
	Without lose of generality, we assume that $x=(r,0)$. When $r=0$, we have
	$$\dist (\gamma^l x,x)=\dist ((0,l),(0,0))=C l^{\frac{1}{1+2\alpha}}$$
	from (4) of Theorem \ref{dist}. Below we assume $r>0$.
	
	Let $s>0$ and we consider a path $\sigma_s$ from $x$ to $\gamma^l(x)=(r,l)$ as follows: $\sigma_s$ first go through a horizontal segment from $(r,0)$ to $(r+s,0)$, then go through a $v$-curve to $(r+s,l)$, then go back to $(r+s,0)$ via a horizontal segment. We have
	$$\dist (\gamma^l x,x) \le \mathrm{length}(\sigma_s) =2s+(r_0+s)^{-2\alpha}l$$
	for all $s\ge 0$. Using $s=l^{\frac{1}{1+2\alpha}}$, we have
	$$\dist (\gamma^l x,x)\le 2l^{\frac{1}{1+2\alpha}}+\left(r+l^{\frac{1}{1+2\alpha}}\right)^{-2\alpha}l \le 3 l^{\frac{1}{1+2\alpha}}.$$
	
	Let $c$ be a minimal geodesic from $(r,0)$ to $(r,l)$. We set 
	$$R:=\max \{t-r|(t,v)\in c\}\ge 0.$$
	Then 
	$$\dist (\gamma^l x,x)= \mathrm{length}(c)\ge (r+R)^{-2\alpha} l.$$
	Together with the upper bound, they yield
	$$(r+R)^{-2\alpha} l \le 3 l^{\frac{1}{1+2\alpha}}.$$
	Thus
	$$R\ge C_1 l^{\frac{1}{1+2\alpha}} -r,$$
	where $C_1=3^{-1/(2\alpha)}$. When $2r\le C_1l^{\frac{1}{1+2\alpha}}$, we have
	$$\dist (\gamma^l x,x)\ge 2R \ge 2C_1l^{\frac{1}{1+2\alpha}}-2r\ge C_1 l^{\frac{1}{1+2\alpha}}.$$ 
	
	Next, we consider the case $2r>C_1l^{\frac{1}{1+2\alpha}}$, that is, $r^{-2\alpha} l<(2/C_1)^{1+2\alpha}r$. Note that
	$$R\le \dfrac{1}{2}\dist (\gamma^lx,x)\le \dfrac{1}{2}r^{-2\alpha} l.$$
	Thus 
	\begin{align*}
		\dist (\gamma^l x,x)&\ge (r+R)^{-2\alpha} l\ge \left(r+\dfrac{1}{2}r^{-2\alpha}l\right)^{-2\alpha}l\\
		&\ge \left(r+\dfrac{1}{2} \left(\dfrac{2}{C_1}\right)^{1+2\alpha}r \right)^{-2\alpha}l\\
		&= C_2 r^{-2\alpha} l \ge C_2 r^{-2\alpha} l^{\frac{1}{1+2\alpha}},    	
	\end{align*}
	where $C_2=(1+2^{2\alpha}/C_1^{1+2\alpha})^{-2\alpha}$. The last inequality holds because $l\ge 1$.
	
	In conclusion, we choose $C(r)=\min \{C_1,C_2r^{-2\alpha}\}$. Then the desired inequality holds.
\end{proof}

\subsection{Limit measure}
 In this subsection we derive the limit measure of $Y$. 


\begin{lem}\label{measure density}
		Let $\meas$ be a limit measure on $Y$. Then 
		\begin{equation}  d\meas=c_{\meas} r^{\frac{n-1}{2}-2\alpha} drdv  \label{limit-measure}
				\end{equation}
		for some constant $c_{\meas} \in [C_1,C_2]$, where $C_1,C_2>0$ are constants depending only on $n$ and $\alpha$.
	\end{lem}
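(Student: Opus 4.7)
The plan is to identify $\meas$ as the weak limit of normalized Riemannian volumes on $\widetilde{M}$ and then compute it explicitly from the warped-product structure. By Cheeger--Colding / Ricci-limit theory, any limit measure on $Y$ arises as $\lim_{i\to\infty} c_i^{-1}\,d\mathrm{vol}_{\widetilde M}$ (weakly), for $c_i = \mathrm{vol}(B_{\lambda_i}(\tilde p))$ and some sequence $\lambda_i\to\infty$ realizing the pGH convergence in Theorem \ref{dist}. In the warped-product coordinates $(r,x,v)\in[0,\infty)\times\mathbb{S}^{n-1}\times\mathbb{R}$ on $\widetilde M$,
\begin{equation*}
 d\mathrm{vol}_{\widetilde M}=r^{n-1}(1+r^2)^{-\frac{n-1}{4}-\alpha}\,dr\,d\mathrm{vol}_{\mathbb{S}^{n-1}}(x)\,dv.
\end{equation*}

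Following the substitution $s=r/\lambda_i$, $w=v/\lambda_i^{1+2\alpha}$ used in Lemmas \ref{M to S}--\ref{S convergence}, I would push $c_i^{-1}d\mathrm{vol}_{\widetilde M}$ forward. For each fixed $s>0$, the sphere fiber has rescaled radius $\lambda_i^{-1}f(\lambda_i s)\sim\lambda_i^{-1/2}s^{1/2}\to 0$, so $\mathbb{S}^{n-1}$ collapses and $d\mathrm{vol}_{\mathbb{S}^{n-1}}$ integrates to $\mathrm{vol}(\mathbb{S}^{n-1})$. The Jacobian of the $(r,v)$-change contributes $\lambda_i^{2+2\alpha}$, and using $(1+\lambda_i^2 s^2)^{-(n-1)/4-\alpha}\sim(\lambda_i s)^{-(n-1)/2-2\alpha}$ a direct calculation gives
\begin{equation*}
\lambda_i^{-(n+3)/2}\,f(\lambda_i s)^{n-1}h(\lambda_i s)\,\lambda_i^{2+2\alpha}\longrightarrow s^{(n-1)/2-2\alpha}.
\end{equation*}
Passing to the weak limit on the $(r,v)$-halfplane therefore produces $d\meas=c_\meas\,r^{(n-1)/2-2\alpha}\,dr\,dv$, with
\begin{equation*}
 c_\meas=\mathrm{vol}(\mathbb{S}^{n-1})\lim_{i\to\infty}\frac{\lambda_i^{(n+3)/2}}{\mathrm{vol}(B_{\lambda_i}(\tilde p))}.
\end{equation*}

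It remains to pin $c_\meas$ between two positive constants depending only on $n,\alpha$, which reduces to the estimate $\mathrm{vol}(B_R(\tilde p))\asymp R^{(n+3)/2}$ for large $R$. For the lower bound I would inscribe the product box $\{0\le r\le R/3,\ x\in\mathbb{S}^{n-1},\ |v|\le 3^{-(1+2\alpha)}R^{1+2\alpha}\}\subset B_R(\tilde p)$: each of its points is reached from $\tilde p$ by concatenating a radial segment to height $r=R/3$, a $v$-curve of length at most $R/3$, and a radial return, with total length $\le R$. Integrating the volume form over the box yields $\asymp R^{(n+3)/2}$. For the upper bound, any arclength-parametrized curve from $\tilde p$ of length at most $R$ satisfies $r(t)\le t\le R$, hence $h(r(t))\ge h(R)\sim R^{-2\alpha}$, which forces $|v|\lesssim R^{1+2\alpha}$; thus $B_R(\tilde p)\subset\{r\le R,\ |v|\le CR^{1+2\alpha}\}$, and integration again gives $\lesssim R^{(n+3)/2}$. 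The main obstacle I anticipate is making these asymptotic comparisons for $f$ and $h$ effective enough to yield explicit constants depending only on $n,\alpha$; each estimate is straightforward but needs careful bookkeeping, particularly near $r=0$ where $f(r)^{n-1}h(r)$ is not purely a power of $r$.
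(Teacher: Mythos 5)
Your proof is correct, and the core of it --- rescaling the warped-product volume form via $s=r/\lambda_i$, $w=v/\lambda_i^{1+2\alpha}$, checking that the rescaled density converges to $s^{\frac{n-1}{2}-2\alpha}$, and reducing the bound on $c_{\meas}$ to the two-sided estimate $\mathrm{vol}(B_R(\tilde p))\asymp R^{\frac{n+3}{2}}$ --- is exactly the computation the paper performs (the paper phrases the density computation as convergence of the ratios $\mathrm{vol}(R_i)/\mathrm{vol}(B_{\lambda_i}(\tilde p))$ for coordinate rectangles $R_i\to R$, citing \cite[Theorem 4.6]{ChCo00b} for $\meas(\partial R)=0$, but this is the same limit). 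Where you genuinely diverge is in how the ball-volume growth is pinned down. The paper traps the ball between the coordinate box $D_i$ and a dilated ball, $B_{\lambda_i}(\tilde p)\subseteq D_i\subseteq B_{C_4\lambda_i}(\tilde p)$, using the distance estimate of \cite[Lemma 1.1]{PanWei} along the collapsing axis, and then invokes Bishop--Gromov to compare $\mathrm{vol}(B_{C_4\lambda_i})$ with $\mathrm{vol}(B_{\lambda_i})$. You instead prove both inclusions directly: an inscribed box via the explicit three-segment path (radial out, $v$-curve, radial back, total length $\le R$ thanks to $h(r)\le r^{-2\alpha}$) and a circumscribing box via the curve-length bound $|\dot v|\le h(r(t))^{-1}\le h(R)^{-1}$. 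Your route is more elementary and self-contained (no curvature input beyond the explicit metric), at the cost of slightly more bookkeeping; the paper's route outsources the hard inclusion to an already-proved distance lemma and lets Bishop--Gromov absorb the constant. Both correctly leave $c_{\meas}$ determined only up to a subsequential limit in $[C_1,C_2]$, which is all the lemma claims.
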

	
	\begin{proof}
		We fix a sequence $\lambda_i\to \infty$. We first estimate the volume of $B_{\lambda_i}(\tilde{p})$. We continue to use the $(r,x,v)$-coordinate on $\widetilde{M}$, introduced in the proof of Lemma \ref{M to S}. By \cite[Lemma 1.1]{PanWei}, we have distance estimate
		$$C_1|l|^{\frac{1}{1+2\alpha}}-2\le \dist ((0,x,0),(0,x,l))\le C_2|l|^{\frac{1}{1+2\alpha}}$$
		for all $l\in\mathbb{Z}$ large. We denote 
		$$D_i=\{(r,x,v)|0\le r\le \lambda_i, x\in \mathbb{S}^{n-1},|v|\le C_3\lambda^{1+2\alpha}_i \}\subseteq \widetilde{M},$$
		where $C_3= (2C_1)^{-1-2\alpha}$. $D_i$ has volume
		$$\mathrm{vol}(D_i)=2C_3 \lambda_i^{1+2\alpha} \int_0^{\lambda_i} f(r)^{n-1} h(r) dr.$$
		 Note that the boundary $\partial D_i$ equals 
		$$\left([0,\lambda_i]\times \mathbb{S}^{n-1} \times \{\pm C_3\lambda_i^{1+2\alpha}\}\right)\cup \left( \{\lambda_i\}\times \mathbb{S}^{n-1} \times [-C_3\lambda^{1+2\alpha}_i,C_3\lambda^{1+2\alpha}_i] \right)$$
		By triangle inequality, for any $(r,x,v)\in D_i$,
		$$\dist (\tilde{p},(r,x,v))\le C_2|v|^{\frac{1}{1+2\alpha}}+1+r\le(C_2C_3^{\frac{1}{1+2\alpha}}+1)\lambda_i+1.$$
		Also, when $(r,x,v)\in \{\lambda_i\}\times \mathbb{S}^{k-1} \times [-C_3\lambda^{1+2\alpha}_i,C_3\lambda^{1+2\alpha}_i]$,$$\dist (\tilde{p},(r,x,v))\ge \lambda_i;$$
		when $(r,x,v)\in [0,\lambda_i]\times \mathbb{S}^{k-1} \times \{\pm C_3\lambda_i^{1+2\alpha}\}$,
		$$\dist (\tilde{p},(r,x,v))\ge \dist (\tilde{p},(0,x,v))-\dist ((0,x,v),(r,x,v))\ge C_1|v|^{\frac{1}{1+2\alpha}}-\lambda_i \ge \lambda_i.$$ 
		Therefore,
		$$B_{\lambda_i}(\tilde{p})\subseteq D_i \subseteq B_{C_4 \lambda_i} (\tilde{p})$$
		holds for all $i$ large, where $C_4=C_2C_3^{\frac{1}{1+2\alpha}}+2$. By Bishop-Gromov inequality, there is a constant $C_5>0$, depending on $n$ and $\alpha$, such that
		$$  C_5 \mathrm{vol}(D_i) \le \mathrm{vol}(B_{\lambda_i}(\tilde{p}))\le \mathrm{vol}(D_i).$$ 
		Passing to a subsequence of $\lambda_i$, we assume that
		$$\dfrac{\mathrm{vol}(B_{\lambda_i}(\tilde{p}))}{\mathrm{vol}(D_i)}\to c_0 \in [C_5,1]$$
		as $i\to\infty$.
		
		To check that the limit measure $\meas$ on $Y$ has the stated expression, it suffices to check $\meas$ on each rectangle in $Y$. Let 
	    $$R=[a,b]\times [c,d]\subseteq Y$$
	    written in $(r,v)$-coordinate of $Y$, where $0\le a<b$ and $c<d$.
	    Let 
	    $$R_i=\{(r,x,v)|r\in[\lambda_i a,\lambda_i b],x\in \mathbb{S}^{n-1},v\in [\lambda_i^{1+2\alpha}c,\lambda_i^{1+2\alpha}d] \}\subseteq \widetilde{M}.$$
	    By Lemma \ref{M to S} and the proof of Lemma \ref{S convergence}, $R_i$ converges to $R$ with respect to the convergent sequence
	    $$(\widetilde{M},\lambda_i^{-2}g_{\widetilde{M}},\tilde{p})\overset{GH}\longrightarrow (Y,\dist,y).$$
	    $R_i$ has volume 
	    $$\mathrm{vol}(R_i)=\lambda_i^{1+2\alpha}(c-d)\int_{\lambda_i a}^{\lambda_i b} f(r)^{n-1}h(r) dr.$$
	    Then 
	    \begin{align*}
	    \meas (R)&=\lim\limits_{i\to\infty}\dfrac{\mathrm{vol}(R_i)}{\mathrm{vol}(B_{\lambda_i}(\tilde{p}))}= \lim\limits_{i\to\infty}\dfrac{\mathrm{vol}(R_i)}{c_0 \mathrm{vol}(D_i)}\\
	    &=c_0^{-1} \lim\limits_{i\to\infty} \dfrac{\lambda_i^{1+2\alpha}(c-d)\int_{\lambda_i a}^{\lambda_i b} f(r)^{n-1}h(r) dr}{2C_3 \lambda_i^{1+2\alpha} \int_0^{\lambda_i} f(r)^{n-1} h(r) dr}\\
	    &= (2c_0C_3)^{-1}\cdot (c-d) \cdot \left( b^{\frac{n+1}{2}-2\alpha}-a^{\frac{n+1}{2}-2\alpha} \right)\\
	    &= \int_R c_{\meas} r^{\frac{n-1}{2}-2\alpha} drdv,
	    \end{align*}
    where $c_{\meas}=(2c_0C_3)^{-1}\cdot (\frac{n+1}{2}-2\alpha)^{-1}$. Here in order to realize the limit formula for $\meas(R)$, we used a fact that the boundary of $R$ is $\meas$-negligible which is justified by, for instance, \cite[Theorem 4.6]{ChCo00b}.
	\end{proof}

\begin{rem}
	One can show directly that our  $Y=[0, \infty)\times \mathbb R$ with the metric $dr^2 + r^{-4\alpha} dv^2 (\alpha >0)$, and measure $r^{\frac{n-1}{2} -2\alpha} dr dv$ is $\RCD(0, n+1)$ when $n\ge \max\{4\alpha+3, 16\alpha^2+8\alpha+1\}$ by checking the $n$-Bakry-\`Emery Ricci curvature is nonnegative under the condition in the interior and that the second fundamental form of the boundary of $Y_{r_0}=[r_0, \infty)\times \mathbb R$ is strictly positive for $r_0 >0$, and let $r_0 \rightarrow 0$. 
\end{rem}

Let us introduce another immediate consequence of the lemma above. For any open subset $U$, $H^{1,2}_0(U, \dist, \meas)$ denotes the $H^{1,2}$-closure of $\mathrm{Lip}_c(U, \dist)$. See for instance \cite{bb} for the definition of the capacity.
\begin{prop}\label{propcap}
The singular set $\mathcal{S}$ has null $2$-capacity with respect to $\meas$, namely $H^{1,2}(Y, \dist, \meas)=H^{1,2}_0(\mathcal{R}, \dist, \meas)$. In particular $H^{1,2}(Y, \dist, \meas)$ coincides with the $H^{1,2}$-closure of $C^{\infty}_c(\mathcal{R})$. 
\end{prop}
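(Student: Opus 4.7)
The plan is to build logarithmic cutoffs near $\mathcal{S}=\{r=0\}$ whose Dirichlet energy is asymptotically zero, then multiply any $f\in H^{1,2}(Y,\dist,\meas)$ by $(1-\Phi_\epsilon)$ to push it into $H^{1,2}_0(\mathcal{R},\dist,\meas)$. First I would define $\phi_\epsilon:[0,\infty)\to[0,1]$ by $\phi_\epsilon\equiv 1$ on $[0,\epsilon]$, $\phi_\epsilon\equiv 0$ on $[1,\infty)$, and $\phi_\epsilon(r)=\log(1/r)/\log(1/\epsilon)$ on $[\epsilon,1]$, and set $\Phi_\epsilon(r,v):=\phi_\epsilon(r)$ on $Y$. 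Because $\Phi_\epsilon$ depends only on $r$ and the smooth metric on $\mathcal R$ is $g_\infty=dr^2+r^{-4\alpha}dv^2$, we have $|\nabla\Phi_\epsilon|^2=(\phi_\epsilon'(r))^2=1/(r\log(1/\epsilon))^2$ on $\mathcal R\cap\{\epsilon<r<1\}$.

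The core estimate is the following. For any $M>0$, by Lemma~\ref{measure density},
\begin{equation*}
\int_{\{|v|\le M\}}|\nabla\Phi_\epsilon|^2\,d\meas \;=\; \frac{2Mc_\meas}{(\log(1/\epsilon))^2}\int_\epsilon^1 r^{\frac{n-5}{2}-2\alpha}\,dr.
\end{equation*}
The Pan--Wei construction requires $n\ge 4\alpha+3$, so the exponent satisfies $(n-5)/2-2\alpha\ge -1$; hence the $r$-integral grows at most like $\log(1/\epsilon)$, and the whole expression is $O(1/\log(1/\epsilon))\to 0$. Simultaneously, $\int_{\{|v|\le M\}}\Phi_\epsilon^2\,d\meas\to 0$ by dominated convergence, since $\Phi_\epsilon\to 0$ $\meas$-a.e.\ on $\mathcal R$.

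Next, I would reduce to approximating a bounded Lipschitz function $f$ of bounded support (contained in some $\{|v|\le M\}$), using that $(Y,\dist,\meas)$ is $\RCD(0,N)$ with locally doubling measure, so such functions are dense in $H^{1,2}(Y)$. For such $f$, expanding $\nabla(\Phi_\epsilon f)=\Phi_\epsilon\nabla f+f\nabla\Phi_\epsilon$ gives
\begin{equation*}
\|\nabla(\Phi_\epsilon f)\|_{L^2}^2 \;\le\; 2\int\Phi_\epsilon^2|\nabla f|^2\,d\meas \;+\; 2\|f\|_\infty^2\int_{\{|v|\le M\}}|\nabla\Phi_\epsilon|^2\,d\meas,
\end{equation*}
and both terms tend to $0$ (the first by dominated convergence, the second by the energy estimate above). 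Thus $(1-\Phi_\epsilon)f\in\mathrm{Lip}_c(\mathcal R)$ converges to $f$ in $H^{1,2}$, showing $f\in H^{1,2}_0(\mathcal R,\dist,\meas)$. By density this gives $H^{1,2}(Y,\dist,\meas)=H^{1,2}_0(\mathcal R,\dist,\meas)$, which is equivalent to $\mathcal S$ having null $2$-capacity. The final claim that the $H^{1,2}$-closure of $C^\infty_c(\mathcal R)$ equals $H^{1,2}(Y,\dist,\meas)$ follows since $\mathcal R$ is a smooth (incomplete) weighted Riemannian manifold, on which standard mollification yields density of $C^\infty_c(\mathcal R)$ in $\mathrm{Lip}_c(\mathcal R)$ in the $H^{1,2}$-norm.

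The main obstacle is the borderline nature of the energy estimate: a linear cutoff would fail, so the logarithmic profile is essentially forced, and it succeeds \emph{precisely} at the threshold exponent $n=4\alpha+3$ built into Pan--Wei's Ricci positivity hypothesis. A secondary technical point is density of compactly supported bounded Lipschitz functions in $H^{1,2}(Y)$ on this non-compact space, but this follows from local doubling of $\meas$ (via Bishop--Gromov) together with standard $\RCD$ theory.
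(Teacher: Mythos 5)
Your argument is correct, but it is not the route the paper takes. The paper also reduces to $f\in\mathrm{Lip}_c(Y,\dist)$ and multiplies by a cutoff vanishing near $\mathcal S$, but it uses the \emph{linear} cutoff $\phi_r$ with $|\nabla\phi_r|\le r^{-1}$ supported on $B_{2r}(\mathcal S)\setminus B_r(\mathcal S)$; Lemma~\ref{measure density} then yields $\int_U\dist_{\mathcal S}^{-2}\,d\meas<\infty$ on bounded sets, hence only a \emph{uniform bound} (not decay) on $\int_Y|\nabla(\phi_rf)|^2\,d\meas$, and the proof is finished by passing to an $H^{1,2}$-weak limit and invoking Mazur's lemma to recover strong approximation by convex combinations. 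Your logarithmic profile replaces this compactness step by the direct strong convergence $\Phi_\epsilon f\to 0$ in $H^{1,2}$, since the factor $(\log(1/\epsilon))^{-2}$ beats the at-most-logarithmic growth of $\int_\epsilon^1 r^{(n-5)/2-2\alpha}\,dr$ guaranteed by $n\ge 4\alpha+3$. What this buys: null $2$-capacity is witnessed by explicit cutoffs of vanishing energy (the textbook formulation), and the estimate survives the borderline exponent $(n-5)/2-2\alpha=-1$, where the paper's integrability condition $\int_U\dist_{\mathcal S}^{-2}\,d\meas<\infty$ degenerates; the paper's version is shorter and needs no special profile. Two small caveats. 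First, your claim that a linear cutoff "would fail" is accurate only at the borderline; away from it the linear cutoff gives bounded (rather than vanishing) energy, which is precisely what the paper exploits via Mazur. Second, justify $|\nabla\Phi_\epsilon|\le|\phi_\epsilon'(r)|$ intrinsically, via $\dist_{\mathcal S}((r,v))=r$, the bound $|\nabla\dist_{\mathcal S}|\le 1$, and the chain rule for minimal relaxed slopes, rather than by a purely formal Riemannian computation on $\mathcal R$; the inequality is all your estimate needs.
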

\begin{proof}
It is enough to check that for any $f \in \mathrm{Lip}_c(X, \dist)$ there exists a sequence $f_i \in \mathrm{Lip}_c(\mathcal{R}, \dist)$ such that $f_i \to f$ in $H^{1,2}(X, \dist, \meas)$. First the  lemma above tells us that for any bounded open subset $U$
\begin{equation}\label{eq:bdd}
\int_U\frac{1}{\dist_{\mathcal{S}}^{2}}d\meas<\infty
\end{equation}
holds, where $\dist_{\mathcal{S}}$ is the distance function from $\mathcal{S}$. Take cut-off functions $\phi_r$ with $\phi_r =0$ on $B_r(\mathcal{S})$, $\phi_r=1$ on $Y \setminus B_{2r}(\mathcal{S})$ and $|\nabla \phi_r| \le r^{-1}$. Then the sequence $\phi_rf \in \mathrm{Lip}_c(\mathcal{R}, \dist)$ satisfies 
\begin{equation}
\int_Y|\nabla (\phi_rf)|^2d\meas \le 2\sup |f|^2\int_Y|\nabla \phi_r|^2d \meas +2\int_Y|\nabla f|^2d\meas \le C<\infty
\end{equation}
because of (\ref{eq:bdd}),
namely $\phi_rf$ is a bounded sequence in $H^{1,2}(Y, \dist, \meas)$. Thus $\phi_rf$ $H^{1,2}$-weakly converge to $f$ as $r \to 0^+$. Thus Mazur's lemma completes the proof.
\end{proof}

\begin{rem}
We can prove that if
\begin{equation}\label{eq:dense}
\int_U\frac{1}{\dist_{\mathcal{S}}^4}d \meas<\infty
\end{equation}
for any bounded open subset $U \subset Y$, 
then  $D(\Delta)$ coincides with the closure of $C^{\infty}_c(\mathcal{R})$, where the norm of $D(\Delta)$ is $(\|f||_{H^{1,2}}^2+\|\Delta f\|_{L^2}^2)^{1/2}$. In particular the Laplacian of any $f \in D(\Delta)$ can be approximated in $L^2$ by the Witten Laplacians of smooth functions with compact supports in $\mathcal{R}$.
Since this is not directly related to our works below, let us only give a sketch of the proof as follows.

Let $f \in D(\Delta)$. Considering the heat flow $\heat_tf$ (which converge to $f$ in $D(\Delta)$ as $t \to 0^+$), it is enough to consider the case when $f$ is smooth on $\mathcal{R}$ (c.f. \cite[Theorem 7.20]{Gr}). 
Recalling the proof of the existence of good cut-off functions in \cite[Lemma 3.1]{MN}, applying such cut-off functions, with no loss of generality, we can assume that $f$ has bounded support and that for any $0<r<1$ we can find a smooth cut-off function $\phi_r \in C^{\infty}(\mathcal{R})$ with $\phi_r=0$ on $B_r(\mathcal{S})$, $\phi_r=1$ on $Y \setminus B_{2r}(\mathcal{S})$, and $r|\nabla \phi_r|+r^2|\Delta \phi_r| \le C$. Then a similar argument as in the proof of  Proposition \ref{propcap} with (\ref{eq:dense}) allows us to conclude that a sequence of finite convex combinations of $\phi_{r_i}f$ converge to $f$ in $D(\Delta)$, where $r_i \to 0^+$. Thus we get the desired result.

It is worth pointing out that (\ref{eq:dense}) is satisfied if $n$ is large. 
\end{rem}

\subsection{ Heat Kernel of $Y$}
In the rest of this subsection we denote by $\Heat (x,y,t)$ the heat kernel of the Ricci limit space $(Y,\dist, \meas)$.

Since $F_\lambda: (Y, \lambda \dist, \meas) \rightarrow (Y,\dist, (F_\lambda)_*\meas)$ is an isomorphism, and $$(F_\lambda)_*\meas = \lambda^{-\frac{n+3}{2}} \meas,$$ by \eqref{eq:heat isom} and \eqref{eq:heat scaling} we have
\begin{equation}
\Heat (x,y,t) = \lambda^{\frac{n+3}{2}} \Heat (F_\lambda(x), F_\lambda(y), \lambda^{2}t). \label{heat-dilation}
\end{equation}

Denote $s = \sqrt{t},$  and put $$ q(x,y,s) = \meas(B_s(x)) \cdot \Heat (x,y,s^2).$$ 
Since $\meas(B_s(x)) = s^{\frac{n+3}{2}} \meas(B_1(F_{1/s}(x)))$, combining with \eqref{heat-dilation}, we have 
\begin{equation}
q(x,y,s) = q(F_\lambda(x),F_\lambda(y), \lambda s).  \label{q-scale}
\end{equation}

For $x= (r,v) \in Y$, $q(x,x,s)$ only depends on $r, \ s$. Denote $h(r,s) = q(x,x,s)$. 
\begin{lem} \label{lem:h} $h$ is invariant under the space-time scaling, i.e.
for all $\lambda >0, \ s >0, \ r \ge 0$ 	\[ h(\lambda r, \lambda s) = h(r, s).\]
In particular $h(0,s) = h(0,1)$. 
	Moreover  for some $C=C(n)>1$ \[
		C^{-1} \le h(r,s) \le C. \]		
\end{lem}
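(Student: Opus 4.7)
The plan is to observe that both assertions reduce directly to tools already in hand: the scaling identity \eqref{q-scale} coming from the metric dilation $F_\lambda$, and the $K=0$ Li–Yau Gaussian bounds \eqref{eq:gauusian} applied on the diagonal.

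First I would verify the scaling invariance. Fix $x=(r,v)\in Y$ and $\lambda>0$. Because $F_\lambda(r,v)=(\lambda r, \lambda^{1+2\alpha}v)$, the $r$-coordinate of $F_\lambda(x)$ is $\lambda r$, so $h(\lambda r,\lambda s)=q(F_\lambda(x),F_\lambda(x),\lambda s)$. Applying \eqref{q-scale} with $y=x$ gives $q(F_\lambda(x),F_\lambda(x),\lambda s)=q(x,x,s)=h(r,s)$, which is the desired identity $h(\lambda r,\lambda s)=h(r,s)$. The second assertion, $h(0,s)=h(0,1)$, is then immediate: take $r=0$ and $\lambda=1/s$, and note that $F_\lambda$ fixes the $r=0$ axis as a set in the sense that the $r$-coordinate of $F_\lambda(0,v)$ is still $0$.

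Next I would establish the two-sided bound. Since $Y$ is an $\RCD(0,N)$ space with $N=n+1$, the Gaussian estimate \eqref{eq:gauusian} can be used with $K=0$, so $C_2=0$. Evaluated on the diagonal $x=y$ (where $\dist(x,y)=0$ kills the exponential factors) this becomes
\[
\frac{C_1^{-1}}{\meas(B_s(x))}\le \Heat(x,x,s^2)\le \frac{C_1}{\meas(B_s(x))},
\]
for some $C_1=C_1(n)>1$ independent of $x$ and $s$. Multiplying through by $\meas(B_s(x))$ gives $C_1^{-1}\le h(r,s)\le C_1$, which is the claimed uniform bound with $C=C_1$.

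There is essentially no hard step here: the only thing to be careful about is that one may legitimately drop the $C_2 t$ terms in \eqref{eq:gauusian} because the $\RCD(0,N)$ hypothesis is preserved under the scaling (confirmed in the excerpt's discussion just after \eqref{eq:gauusian}), so the constants really are uniform in $s$. Everything else is a one-line application of the dilation formula \eqref{q-scale}.
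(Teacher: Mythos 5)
Your proof is correct and follows exactly the paper's argument: the scaling identity is read off from \eqref{q-scale} with $x=y$, and the uniform two-sided bound comes from the $K=0$ Li--Yau estimate \eqref{eq:gauusian} evaluated on the diagonal, where the exponential factors disappear. The paper's own proof is just a terser version of what you wrote.
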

\begin{proof}
	The space-time scaling invariance is a reformulation of  \eqref{q-scale} by setting $x=y=(r, 0)$. The inequality follows from that $Y$ is an $\RCD(0,n+1)$ space and 
 the Li-Yau estimate (\ref{eq:gauusian}) when $K=0$. 
\end{proof}

\subsection{Measure of balls in $Y$}

First we give the following estimate on $\meas (B_s(x))$ when the ball is away from the singular set of $Y$.
\begin{lem}  \label{ball-asym}
	For $x = (r_0, v_0) \in Y$ and $r_0 > s>0$, we have 
	\begin{equation}
c_{\meas} \pi s^2 (r_0 -s)^{\frac{n-1}{2}} \le		\meas (B_s(x)) \le c_{\meas} \pi s^2 (r_0 +s)^{\frac{n-1}{2}}. 
	\end{equation}
In particular, $\meas (B_s(x)) =  c_{\meas} \pi s^2  r_0^{\frac{n-1}{2}} [1+ O(r^{-1}_0s)]$ when $ s \rightarrow 0$ or $ r_0 > > s.$
\end{lem}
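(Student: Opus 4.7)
The plan is to exploit the fully explicit Riemannian description of the regular part. First, I would observe that the hypothesis $r_0 > s$ together with the fact that the projection $(r,v) \mapsto r$ is $1$-Lipschitz for $g_\infty = dr^2 + r^{-4\alpha}\,dv^2$ (since along any curve $|\gamma'|_{g_\infty}^2 \ge r'(t)^2$) forces every $y \in B_s(x)$ to have $r$-coordinate in $(r_0 - s, r_0 + s)$; in particular $B_s(x)$ sits inside the strip $\Sigma := (r_0 - s, r_0 + s) \times \mathbb{R}$ and hence in the smooth regular set. Consequently $\meas(B_s(x))$ can be computed using the explicit formula $d\meas = c_{\meas}\, r^{\frac{n-1}{2} - 2\alpha}\,dr\,dv$ from Lemma~\ref{measure density}.

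Since $r \mapsto r^{-4\alpha}$ is monotone decreasing for $\alpha > 0$, on the strip $\Sigma$ the metric $g_\infty$ is sandwiched between two \emph{flat} metrics
\begin{equation*}
g_- := dr^2 + (r_0+s)^{-4\alpha}\,dv^2 \;\le\; g_\infty \;\le\; dr^2 + (r_0-s)^{-4\alpha}\,dv^2 =: g_+.
\end{equation*}
Combining this with the $1$-Lipschitz property (so that any almost length-minimizing curve between points at $g_\infty$-distance less than $s$ is automatically trapped in $\Sigma$), I would deduce the nested inclusion $B_s^{g_+}(x) \subseteq B_s(x) \subseteq B_s^{g_-}(x)$. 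Under the affine change of variable $\tilde v = (r_0 \pm s)^{-2\alpha}(v - v_0)$, the metric $g_\pm$ becomes Euclidean and the corresponding ball becomes the standard disk $\{(r-r_0)^2 + \tilde v^2 < s^2\}$ of area $\pi s^2$; simultaneously the reference measure picks up a Jacobian factor, giving $d\meas = c_{\meas}(r_0 \pm s)^{2\alpha}\, r^{\frac{n-1}{2}-2\alpha}\,dr\,d\tilde v$.

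Since the Pan--Wei dimension condition forces $n \ge 4\alpha + 3$, the exponent $\frac{n-1}{2} - 2\alpha \ge 1$ is positive, so $r \mapsto r^{\frac{n-1}{2}-2\alpha}$ is increasing on $[r_0 - s, r_0 + s]$. Bounding this factor above by $(r_0+s)^{\frac{n-1}{2}-2\alpha}$ on $B_s^{g_-}(x)$ and below by $(r_0-s)^{\frac{n-1}{2}-2\alpha}$ on $B_s^{g_+}(x)$, then integrating over the Euclidean disk, yields
\begin{equation*}
c_{\meas}\pi s^2 (r_0 - s)^{\frac{n-1}{2}} \;\le\; \meas\bigl(B_s^{g_+}(x)\bigr) \;\le\; \meas(B_s(x)) \;\le\; \meas\bigl(B_s^{g_-}(x)\bigr) \;\le\; c_{\meas}\pi s^2 (r_0 + s)^{\frac{n-1}{2}},
\end{equation*}
and the stated asymptotic follows by the binomial expansion $(r_0 \pm s)^{\frac{n-1}{2}} = r_0^{\frac{n-1}{2}}(1 + O(r_0^{-1} s))$. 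The main obstacle, such as it is, lies in justifying the ball-nesting respects the intrinsic length metric on $Y$ (a priori defined as a Ricci limit), but this is immediate from the $1$-Lipschitz $r$-projection argument together with Theorem~\ref{dist}(2).
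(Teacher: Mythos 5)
Your proof is correct and takes essentially the same route as the paper's: trap $B_s(x)$ in the strip $(r_0-s,r_0+s)\times\mathbb{R}$, sandwich $g_\infty$ there between the two flat metrics with coefficients $(r_0\pm s)^{-4\alpha}$, deduce the nested ball inclusions, and bound the density $r^{\frac{n-1}{2}-2\alpha}$ by its endpoint values before integrating over the flat disk of area $\pi s^2$. The extra justifications you supply (the $1$-Lipschitz $r$-projection and the monotonicity of the density from $n\ge 4\alpha+3$) are exactly the points the paper leaves implicit.
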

\begin{proof}
	Since the distance on the regular set of $Y$ is induced by the Riemannian metric \[
	g = dr^2 + r^{-4\alpha} dv^2,\]
	we have $B_s(x) \subseteq \{ (r,v) \, | \, r_0-s < r < r_0 +s\}$ and \[
	B_s^{g_2}(x) \subseteq B_s(x) \subseteq B_s^{g_1}(x), \]
	where $g_1 = dr^2 + (r_0 +s)^{-4\alpha} dv^2, \ g_2 =  dr^2 + (r_0 -s)^{-4\alpha} dv^2$ are Euclidean metrics on regular set of $Y$. Hence 
	\begin{eqnarray*}
		\meas (B_s(x)) & \le & \int_{ B_s^{g_1}(x)} d\meas = c_{\meas} \int_{ B_s^{g_1}(x)} r^{\frac{n-1}{2}-2\alpha} dr dv \\
		& \le & c_{\meas} \int_{ B_s^{g_1}(x)} (r_0+s)^{\frac{n-1}{2}-2\alpha} dr dv = (r_0 +s)^{\frac{n-1}{2}} \mathrm{vol}^{g_1} (B_s^{g_1}(x))\\
		& =  & c_{\meas} (r_0 +s)^{\frac{n-1}{2}}  \pi s^2.
	\end{eqnarray*}
Similarly we have the lower bound. 
\end{proof}

In general, to get a handle on $\meas (B_s(x))$, we compare it to a ``rectangular" domain which is easier to compute, as follows. 

Note $B_s(x) = F_s (B_1 (x_s))$, where $x_s=F_s^{-1} (x) = (r_0 s^{-1}, v_0 s^{-2\alpha -1})$ for $x = (r_0, v_0) \in Y$. Denote the ``unit rectangle" at $x_s$ by
 $$C_1(x_s) = \{(r,v)\, |\, (r_0 s^{-1} -1)_+ < r < r_0 s^{-1} +1, -1 < v < 1 \}.$$ 
Then 
\begin{equation}
	\frac{\meas (B_s(x))}{\meas (F_s (C_1(x_s)))} = \frac{\meas(B_1 (x_s))}{\meas(C_1(x_s))} 
\end{equation}

Since $F_s (C_1(x_s)) = \{(r,v)\, |\, (r_0  - s)_+ < r < r_0  +s,\ - s^{2\alpha +1} < v < s^{2\alpha +1} \}$, we compute, as $n+1 > 4\alpha$, 
\begin{align*}
	&\meas (F_s (C_1(x_s)))\\
	=\ & c_{\meas} \int_{-s^{1+2\alpha}}^{s^{1+2\alpha}} \int_{(r_0-s)_+}^{r_0+s} r^{\frac{n-1}{2}-2\alpha} dr dv \\
	=\ & c_{\meas} \frac{2}{\frac{n+1}{2}-2\alpha} s^{1+2\alpha} \left((r_0 +s)^{\frac{n+1}{2}-2\alpha} -  (r_0-s)_+^{\frac{n+1}{2}-2\alpha} \right) \\
	=\ & \frac{4c_{\meas}}{n+1-4\alpha} s^{1+2\alpha} r_0^{\frac{n+1}{2}-2\alpha}\left((1 +sr_0^{-1})^{\frac{n+1}{2}-2\alpha} -  (1-sr_0^{-1})_+^{\frac{n+1}{2}-2\alpha} \right) \\
	= \ & 4c_{\meas}  s^{2+2\alpha} r_0^{\frac{n-1}{2}-2\alpha} [1 + O(r_0^{-1}s)] \ \mbox{when} \ \ r_0 > > s.
\end{align*}
 Combining the above with Lemma~\ref{ball-asym}, we have for  $r_0 > > s,$ 
$$	\frac{\meas (F_s (C_1(x_s)))}{\meas (B_s(x))} = \frac{4}{\pi} (sr_0^{-1})^{2\alpha}  [1 + O(r_0^{-1}s)]. $$ 

Since $x_s = (r_0 s^{-1}, v_0 s^{-2\alpha -1})$ and the measure is independent of the $v$-coordinate, $\frac{\meas(C_1(x_s))}{\meas(B_1 (x_s))}$ is a continuous function of $r_0 s^{-1}$ only. Denote $$\tau = sr_0^{-1}, \ \  \ \ 
\frac{\meas(C_1(x_s))}{\meas(B_1 (x_s))} =  f(\tau^{-1}).$$

In summary we have 
\begin{prop}\label{prop:vol}
	Let $C= \frac{4c_{\meas}}{n+1-4\alpha}$. For $x = (r_0, v_0) \in Y$, when $r_0 >0$, \begin{equation}
\frac{1}{	\meas (B_s(x))} = 	C^{-1} s^{-1-2\alpha} r_0^{-\frac{n+1}{2}+2\alpha}G(\tau),  \label{ball-formula}
	\end{equation}
where 
\begin{equation} \label{eq:G}
G(\tau)=\left((1 +\tau)^{\frac{n+1}{2}-2\alpha} -  (1-\tau)_+^{\frac{n+1}{2}-2\alpha} \right)^{-1} f(\tau^{-1}).
\end{equation}
Moreover, $f$ is a continuous function with $f(0) >0$ and   $f(\tau^{-1}) = \frac{4}{\pi} \tau^{2\alpha} [1+ O(\tau)]$ when $\tau \rightarrow 0$. \\
When $r_0 =0$, 	\begin{equation} \meas (B_s(x)) = \frac{C}{f(0)} \,  s^{\frac{n+3}{2}}. \end{equation}
\end{prop}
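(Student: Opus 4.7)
The plan is to exploit the dilation $F_s$ of Theorem~\ref{dist}(3) together with the explicit measure formula \eqref{limit-measure} to reduce $\meas(B_s(x))$ to a one-parameter problem. A direct change of variables $r\mapsto sr$, $v \mapsto s^{1+2\alpha}v$ in \eqref{limit-measure} gives $(F_s)_*\meas = s^{-(n+3)/2}\meas$, hence $\meas(F_s(A)) = s^{(n+3)/2}\meas(A)$ for any Borel set $A$. Setting $x_s := F_s^{-1}(x) = (r_0 s^{-1}, v_0 s^{-(1+2\alpha)})$, the dilation property yields $B_s(x) = F_s(B_1(x_s))$, so
\[\meas(B_s(x)) = s^{(n+3)/2}\meas(B_1(x_s)).\]
The obstruction is that $\meas(B_1(x_s))$ has no closed form; I sidestep this by comparing against the explicit rectangle $C_1(x_s)$ defined in the body of the excerpt, whose measure is directly computable from the density $c_\meas r^{(n-1)/2 - 2\alpha}$.

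Next I would compute $\meas(F_s(C_1(x_s)))$ by integration: the $v$-integral is trivial (the density is $v$-independent) and contributes $2s^{1+2\alpha}$, while the $r$-integral over $[(r_0 - s)_+, r_0 + s]$ is evaluated by the power rule, producing the factor $\bigl((r_0+s)^{(n+1)/2-2\alpha} - (r_0-s)_+^{(n+1)/2-2\alpha}\bigr)/((n+1)/2 - 2\alpha)$. The hypothesis $n+1 > 4\alpha$ built into the Pan--Wei construction ensures that this exponent is strictly positive, so no divergence occurs. Substituting $\tau = s/r_0$ extracts $s^{1+2\alpha} r_0^{(n+1)/2 - 2\alpha}$ with leading constant $C = 4c_\meas/(n+1-4\alpha)$. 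Because both $B_1(\cdot)$ and $C_1(\cdot)$ commute with $v$-translation and the density is $v$-independent, the ratio $\meas(C_1(x_s))/\meas(B_1(x_s))$ depends only on the $r$-coordinate $\tau^{-1}$ of $x_s$; I \emph{define} this ratio as $f(\tau^{-1})$, and combining the two identities delivers the displayed formula with $G(\tau)$ as in \eqref{eq:G}.

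For the asymptotic as $\tau \to 0$, I would divide the sharp sandwich of Lemma~\ref{ball-asym} (giving $\meas(B_s(x)) = c_\meas \pi s^2 r_0^{(n-1)/2}[1 + O(\tau)]$) by the explicit formula for $\meas(F_s(C_1(x_s)))$, applying the Taylor expansion $(1+\tau)^a - (1-\tau)^a = 2a\tau + O(\tau^3)$ with $a = (n+1)/2 - 2\alpha$, which yields $f(\tau^{-1}) = (4/\pi)\tau^{2\alpha}[1 + O(\tau)]$. For the $r_0 = 0$ case, the dilation identity and $v$-translation invariance give $\meas(B_s(x)) = s^{(n+3)/2}\meas(B_1((0,0)))$, while $\meas(C_1((0,0))) = c_\meas\int_{-1}^1\int_0^1 r^{(n-1)/2-2\alpha}\,dr\,dv = C$, so once $f(0)$ is defined by continuity the formula $\meas(B_s(x)) = Cs^{(n+3)/2}/f(0)$ is immediate. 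The main obstacle I anticipate is establishing the continuity of $f$ at $\tau^{-1} = 0$ together with $f(0) > 0$: continuity of $f$ on $(0, \infty)$ follows from a dominated-convergence argument applied to the explicit density, while at $0$ one must show $\meas(B_1((0,0)))$ is finite and strictly positive, which I would handle by sandwiching $B_1((0,0))$ between a bounded rectangle (using Theorem~\ref{dist}(4) to control its $v$-extent) and a small neighborhood of the origin inside the regular part.
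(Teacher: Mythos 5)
Your argument is correct and follows essentially the same route as the paper: the dilation identity $\meas(B_s(x))=s^{(n+3)/2}\meas(B_1(x_s))$, comparison with the explicitly integrable rectangle $C_1(x_s)$, the definition of $f$ as the ratio $\meas(C_1(x_s))/\meas(B_1(x_s))$, and the derivation of the $\tau\to 0$ asymptotic from Lemma~\ref{ball-asym}. The only difference is that you spell out the continuity of $f$ at $0$ and the finiteness and positivity of $\meas(B_1((0,0)))$, which the paper asserts without detail; this is a welcome addition rather than a deviation.
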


\subsection{ Heat Trace Integral }
Using the estimates from the above two subsections,  we can compute the integral of the heat kernel on any rectangular coordinate region $\{(r,v) | 0 \le r_1\le r \le r_2 \leq \infty,\  -\infty <  v_1 \le v \le v_2 < \infty\} \subseteq Y$. 

Recall that, for $x=(r, v)$ (we also say that $r=r(x)$), 
\[\Heat (x,x,s^2) = \frac{1}{\meas (B_s(x))}  q(x,x,s) = \frac{1}{\meas (B_s(x))} h(r,s).  \]
Set $\tau = \frac{s}{r}$. 
Using \eqref{ball-formula} and Lemma \ref{lem:h} (also denoting $C'= \frac{n+1-4\alpha}{4}$)
\begin{eqnarray}
\lefteqn{	\int_{v_1}^{v_2} \int_{r_1}^{r_2} \Heat (x,x,s^2) d\meas} \\  & = & C' s^{-1-2\alpha} \int_{v_1}^{v_2} \int_{r_1}^{r_2}  h(r,s) r^{-\frac{n+1}{2}+2\alpha}G(\tfrac{s}{r}) r^{\frac{n-1}{2}-2\alpha} dr dv  \nonumber \\
	& = & C' s^{-1-2\alpha} (v_2-v_1) \int_{r_1}^{r_2}  h(1,\tfrac{s}{r}) r^{-1}G(\tfrac{s}{r})  dr \nonumber \\
	& = &C'  s^{-1-2\alpha} (v_2-v_1) \int_{s/r_2}^{s/r_1}  h(1,\tau) \tau^{-1}G(\tau)  d\tau. \label{eq:Heat Integral}
\end{eqnarray}
Since $h$ is bounded (Lemma \ref{lem:h}) and $n-1 > 4 \alpha$, by \eqref{eq:G}, the integral is convergent at $\tau = \infty$. When $\tau \rightarrow 0$, since $f(\tau^{-1}) \sim \tau^{2\alpha}$, the integrand is $\sim \tau^{2\alpha -2}$. Thus the integral is convergent at $\tau=0$ when $\alpha > \frac 12$; diverges when $\alpha \le \frac 12$. Therefore, by taking various $r_1, r_2$, we obtain
\begin{thm}  \label{int-heat-Y}
	For $\alpha > \frac 12$, $k = 2\alpha + 1$, 
	\begin{eqnarray*}
0 < 	\lim_{s \rightarrow 0}	s^k	\int_{v_1}^{v_2} \int_{0}^{\infty} \Heat (x,x,s^2) d\meas  < \infty, \\
	\lim_{s \rightarrow 0}	s^k	\int_{v_1}^{v_2} \int_{r_1}^{\infty} \Heat (x,x,s^2) d\meas  =  0 \ \ \mbox{for any} \ \ r_1 >0, \\
 \lim_{L \rightarrow \infty}	\lim_{s \rightarrow 0}	s^k	\int_{v_1}^{v_2} \int_{sL}^{\infty} \Heat (x,x,s^2) d\meas =  0. 
	\end{eqnarray*}
For $\alpha = \frac 12, \ k > 2$, 
\begin{eqnarray*}
		\lim_{s \rightarrow 0}	s^k	\int_{v_1}^{v_2} \int_{0}^{r_2} \Heat (x,x,s^2) d\meas  = 0.
\end{eqnarray*}
For $0< \alpha < \frac 12, \ k = 2$, 
\begin{eqnarray*}
0 < 	\lim_{s \rightarrow 0}	s^2	\int_{v_1}^{v_2} \int_{0}^{r_2} \Heat (x,x,s^2) d\meas  < \infty.  
\end{eqnarray*}
\end{thm}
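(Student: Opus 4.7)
The proof rests on the master formula just derived,
\[
s^k \int_{v_1}^{v_2}\int_{r_1}^{r_2}\Heat(x,x,s^2)\,d\meas
= C'(v_2-v_1)\, s^{k-1-2\alpha}\int_{s/r_2}^{s/r_1} h(1,\tau)\,\tau^{-1}G(\tau)\,d\tau,
\]
combined with sharp two-sided asymptotics of $G$. Expanding $(1+\tau)^{a}-(1-\tau)^{a}$ with $a=(n+1)/2-2\alpha$ and using $f(\tau^{-1})=\tfrac{4}{\pi}\tau^{2\alpha}[1+O(\tau)]$ gives $G(\tau)\sim \tfrac{4}{\pi(n+1-4\alpha)}\tau^{2\alpha-1}$ as $\tau\to 0^+$, while the definition of $G$ gives $G(\tau)=O(\tau^{-(n+1)/2+2\alpha})$ as $\tau\to\infty$. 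Combined with the boundedness of $h$ from Lemma \ref{lem:h}, the integrand $h(1,\tau)\tau^{-1}G(\tau)$ is of order $\tau^{2\alpha-2}$ near $0$ (integrable iff $\alpha>1/2$) and always integrable near $\infty$ since $n\ge 4\alpha+3$.

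\textbf{Cases $\alpha\ge 1/2$.} For $\alpha>1/2$, $k=2\alpha+1$, the prefactor $s^{k-1-2\alpha}$ equals $1$, so the right-hand side depends on $s$ only through the endpoints. Taking $(r_1,r_2)=(0,\infty)$ yields the fixed positive finite value $C'(v_2-v_1)\int_0^\infty h(1,\tau)\tau^{-1}G(\tau)d\tau$; taking $r_1>0$, $r_2=\infty$ shrinks the upper limit $s/r_1\to 0$, forcing the integral to vanish by integrability at $0$; and taking $r_1=sL$, $r_2=\infty$ gives the $s$-independent quantity $C'(v_2-v_1)\int_0^{1/L}h(1,\tau)\tau^{-1}G(\tau)d\tau$, vanishing as $L\to\infty$. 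For $\alpha=1/2$, $k>2$, the integrand is $O(\tau^{-1})$ near $0$, so $\int_{s/r_2}^{\infty}h(1,\tau)\tau^{-1}G(\tau)d\tau=O(\log(1/s))$ and $s^{k-2}\log(1/s)\to 0$.

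\textbf{Case $0<\alpha<1/2$, $k=2$.} This is the main content: $s^{1-2\alpha}\to 0$ competes with an integral diverging as $(s/r_2)^{2\alpha-1}$. I would substitute $u=\tau r_2/s$ to rewrite the quantity as
\[
C'(v_2-v_1)\, s^{1-2\alpha}\int_{1}^{\infty} h(1,su/r_2)\,u^{-1}G(su/r_2)\,du.
\]
The small-$\tau$ asymptotic of $G$ gives $s^{1-2\alpha}G(su/r_2)\to \tfrac{4}{\pi(n+1-4\alpha)}r_2^{1-2\alpha}u^{2\alpha-1}$ pointwise for each $u\ge 1$. Splitting the integration at $u=r_2/s$ and applying the two asymptotic regimes of $G$ (on $u\ge r_2/s$ the a priori bad factor $s^{1-(n+1)/2}$ is absorbed using $s\ge r_2/u$ together with $1-(n+1)/2<0$) yields a uniform dominating function $Cr_2^{1-2\alpha}u^{2\alpha-2}$, integrable on $[1,\infty)$ since $2\alpha-2<-1$. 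Dominated convergence then produces a positive finite limit proportional to $h(1,0^+)\,r_2^{1-2\alpha}\int_1^{\infty}u^{2\alpha-2}du$.

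The main obstacle, and the only non-algebraic ingredient, is establishing the pointwise limit $h(1,\tau)\to h_0>0$ as $\tau\to 0^+$. Since $(1,0)$ is an interior regular point carrying a smooth weighted Riemannian structure with density $c_\meas r^{(n-1)/2}$ relative to $dV_g=r^{-2\alpha}dr\,dv$, the classical short-time on-diagonal expansion for the Witten Laplacian gives $\Heat((1,0),(1,0),\tau^2)\sim \tfrac{1}{4\pi c_\meas \tau^2}$; with $\meas(B_\tau(1,0))\sim \pi c_\meas\tau^2$ from Lemma~\ref{ball-asym}, this yields $h_0=1/4$. Transferring this smooth local heat kernel asymptotic to the global $\RCD$ heat kernel on $Y$, via domain monotonicity and the finite propagation speed of the heat flow away from the singular set, is the only non-routine analytic step.
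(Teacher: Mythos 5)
Your proposal is correct and follows essentially the same route as the paper: the identity \eqref{eq:Heat Integral} together with the small- and large-$\tau$ asymptotics of $G$ from Proposition \ref{prop:vol} and the bounds on $h$ from Lemma \ref{lem:h}, then reading off each case from the exponent bookkeeping. Your treatment of the subcritical case $0<\alpha<\tfrac12$ is in fact more complete than the paper's one-line "by taking various $r_1,r_2$": you correctly identify that the existence (not just finiteness) of the limit requires the pointwise limit $h(1,\tau)\to\tfrac14$, which the paper only establishes later, in the proof of Lemma \ref{int-heat-Y-1/2}, via the standard short-time expansion at the smooth point $(1,0)$ — exactly the justification you sketch (though what you call "finite propagation speed of the heat flow" is really the Gaussian off-diagonal decay/principle of not feeling the boundary).
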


When $\alpha < \frac 12,\  k =2$, it is in the setting studied in \cite[Theorem 4.3]{AmbrosioHondaTewodrose}, and the  condition \cite[(4.10)]{AmbrosioHondaTewodrose} (namely (\ref{eq:110}))  is satisfied.  

For the critical case $\alpha = \frac 12$, when $r_1=0$, we can show the integral in \eqref{eq:Heat Integral} diverges slowly like $\log$.

\begin{lem}
	\label{int-heat-Y-1/2} For any fixed $r_2 >0$,  let $\tilde{L}(s) = \int_{s/r_2}^{\infty}  h(1,\tau) \tau^{-1}G(\tau)  d\tau$. Then for $\alpha = \frac 12$, $s\in (0, s_0]$ and $s_0$ finite, 
	\[ \tilde{L} (s) =   \frac{1}{(n-1) \pi} (-\log s + \log r_2) +\tilde{C}_0 +O(s)  \]
	for some constant $\tilde{C}_0$ given explicitly in the proof. 	
 In particular 
	 $\tilde{L}$ is slowly varying as $s \rightarrow 0^+$, i.e. for any $a>0$ fixed,  \[
	\lim_{s \rightarrow 0^+} \frac{\tilde{L}(a s)}{\tilde{L}(s)} =1. \]
\end{lem}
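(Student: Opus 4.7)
The plan is to pin down the leading singularity of the integrand $h(1,\tau)\tau^{-1}G(\tau)$ as $\tau\to 0^+$. The integrability at $\tau=\infty$ was already handled in the derivation of Theorem~\ref{int-heat-Y}, where one sees the integrand decays like $\tau^{-2}$, so $\tilde C_\infty := \int_1^{\infty} h(1,\tau)\tau^{-1}G(\tau)\,d\tau$ is a finite constant.

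First I will expand $G(\tau)$. With $\alpha=\tfrac12$, Proposition~\ref{prop:vol} gives $f(\tau^{-1})=\tfrac4\pi\tau+O(\tau^2)$, while the binomial formula yields $(1+\tau)^{(n-1)/2}-(1-\tau)^{(n-1)/2}=(n-1)\tau+O(\tau^3)$. Dividing,
\[
G(\tau)=\frac{4}{(n-1)\pi}+O(\tau), \qquad \tau\to 0^+,
\]
so $\tau^{-1}G(\tau)=\tfrac{4}{(n-1)\pi\,\tau}+O(1)$ near $0$.

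Next I will compute $h(1,0^+)$. Since $x=(1,0)\in\mathcal{R}$, the space $Y$ is locally a smooth weighted Riemannian manifold around $x$ with metric $dr^2+r^{-2}dv^2$ and smooth positive weight proportional to $r^{(n-1)/2-1}$ (Lemma~\ref{measure density}). By locality of the heat kernel at short times (a consequence of the Gaussian estimate \eqref{eq:gauusian} with $K=0$), $\Heat(x,x,\tau^2)$ agrees with the heat kernel of this smooth weighted model up to exponentially small errors, so the classical Minakshisundaram--Pleijel expansion applies: $\Heat(x,x,\tau^2)=\tfrac{e^{\phi(x)}}{4\pi\tau^2}(1+O(\tau^2))$ where $e^{-\phi(x)}$ is the weight density at $x$. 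Combined with the local expansion $\meas(B_\tau(x))=e^{-\phi(x)}\pi\tau^2(1+O(\tau^2))$ (obtainable from Lemma~\ref{ball-asym} together with Bertrand--Diguet--Puiseux for the Riemannian volume, the weight being Taylor-expanded around $x$ with odd linear term integrating to zero), the density $e^{\pm\phi(x)}$ cancels, yielding
\[
h(1,\tau) = \tfrac14 + O(\tau^2)\qquad \text{as }\tau\to 0^+.
\]
The constant $\tfrac14=\omega_2/(4\pi)^{2/2}$ is exactly the universal Euclidean prefactor for rectifiable dimension $2$.

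Combining the two expansions gives $h(1,\tau)\tau^{-1}G(\tau)=\tfrac{1}{(n-1)\pi\,\tau}+E(\tau)$ with $E$ bounded on $(0,1]$. Splitting at $\tau=1$,
\[
\tilde L(s)=\int_{s/r_2}^{1}\!\!\frac{d\tau}{(n-1)\pi\,\tau}+\int_{s/r_2}^{1} E(\tau)\,d\tau+\tilde C_\infty =\frac{-\log s+\log r_2}{(n-1)\pi}+\tilde C_0+O(s),
\]
where $\tilde C_0=\int_0^1 E(\tau)\,d\tau+\tilde C_\infty$, using boundedness of $E$ to absorb $\int_0^{s/r_2}|E|\,d\tau=O(s)$. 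Slow variation is then immediate: for any fixed $a>0$, $\tilde L(as)/\tilde L(s)\to 1$ because both are asymptotic to $-\tfrac{\log s}{(n-1)\pi}\to+\infty$.

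The main obstacle is the quantitative rate $h(1,\tau)-\tfrac14=O(\tau^2)$, which requires transferring the weighted Minakshisundaram--Pleijel expansion from the smooth local model to the $\RCD$ heat kernel on $Y$ via heat-kernel locality. Even a weaker rate $h(1,\tau)-\tfrac14=O(\tau^\epsilon)$ would yield the same structural expansion with $O(s^\epsilon)$ in place of $O(s)$, which still suffices for the slow-variation conclusion.
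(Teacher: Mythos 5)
Your proposal is correct and follows essentially the same route as the paper: expand $G(\tau)=\frac{4}{(n-1)\pi}+O(\tau)$ and $h(1,\tau)=\frac14+O(\tau)$ near $\tau=0$ via smoothness of $Y$ near $(1,0)$ and heat-kernel locality, subtract the $\frac{1}{(n-1)\pi\tau}$ singularity, split the integral at $\tau=1$, and absorb the bounded remainder into $\tilde C_0$ with an $O(s)$ tail. The only difference is your stronger claimed rate $h(1,\tau)-\frac14=O(\tau^2)$, which is not needed — the paper's $O(\tau)$ already makes $E$ bounded and yields the stated $O(s)$ error.
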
  
\begin{proof} By Lemma~\ref{lem:h}, $ h(1, \tau)$ is bounded between two positive constant.
	 Since $(1,0) \in Y$ is a regular point with tangent cone $\mathbb R^2$, in fact a smooth point contained in a smooth neighborhood a definite distance away from the singular set, by the standard small time asymptotic expansion of the heat kernel, for small $\tau$,
	 \[
	  h(1, \tau) = \frac 14 + O(\tau).
	 \]
	 When $\alpha = \frac 12$, by Proposition~\ref{prop:vol}   $G(\tau)$ is bounded on $(0, \infty)$ and, for small $\tau$,
	 \[   G(\tau) = \frac{4}{\pi} \cdot \frac{1}{n-1} + O(\tau). 
	 \] 
Recall the integral $\int h(1, \tau) \tau^{-1}G(\tau)  d\tau $ is always convergent at infinity. 	Denote $C_0 = \int_{1}^{\infty}  h(1, \tau) \tau^{-1}G(\tau)  d\tau$. Then  
	\begin{eqnarray*} \tilde{L}(s) & = &  \int_{s/r_2}^{1}  h(1, \tau) \tau^{-1}G(\tau)  d\tau +  C_0 \\
		& =  &  \int_{s/r_2}^{1} \tau^{-1} \left( h(1, \tau) G(\tau) - \frac{1}{(n-1) \pi} \right)  d\tau +  \int_{s/r_2}^{1}  \frac{\tau^{-1}}{(n-1) \pi}  d\tau  + C_0 \\
		& = & \frac{-1}{(n-1) \pi} (\log (s/r_2)) +C_0 + \int_{s/r_2}^{1} \tau^{-1} \left( h(1, \tau) G(\tau) - \frac{1}{(n-1) \pi} \right)  d\tau.
	\end{eqnarray*}
The last integral is 
\begin{eqnarray*} \int_{0}^{1} \tau^{-1} \left(h(1, \tau) G(\tau) - \frac{1}{(n-1) \pi} \right)  d\tau
	-\int_0^{s/r_2} \tau^{-1} \left( h(1, \tau) G(\tau) - \frac{1}{(n-1) \pi} \right)  d\tau.
	\end{eqnarray*}	
Since $h(1, \tau) G(\tau) - \frac{1}{(n-1) \pi}=O(\tau)$, the first integral is convergent at $\tau=0$ and the second one is $O(s)$.  
	\end{proof}

As a consequence, for the critical case, 
\begin{thm}  \label{int-heat-Y:crit}
For $\alpha = \frac 12, \ k =2$, and $s\in (0, s_0]$, $s_0$ finite, 
$$
 	s^2	\int_{v_1}^{v_2} \int_{0}^{r_2} \Heat (x,x,s^2) d\meas  =  \frac{v_2-v_1}{4 \pi} (-\log s + \log r_2) +\tilde{C} +O(s),
$$
where $\tilde{C}= \tfrac{(n-1)(v_2-v_1)}{4}\tilde{C}_0$.
And
$$	0 < \lim_{s \rightarrow 0}	s^2	\int_{v_1}^{v_2} \int_{r_1}^{r_2} \Heat (x,x,s^2) d\meas 
< \infty  \ \ \mbox{for any} \ r_1>0. $$
\end{thm}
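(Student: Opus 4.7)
The plan is to reduce both statements directly to the one-variable integral identity \eqref{eq:Heat Integral} combined with the asymptotic from Lemma \ref{int-heat-Y-1/2}. Specialize \eqref{eq:Heat Integral} to $\alpha = \tfrac{1}{2}$, so that $s^{-1-2\alpha} = s^{-2}$ exactly cancels the prefactor $s^2$ on the left-hand side, and $C' = (n-1)/4$. This yields the clean identity
\begin{equation*}
s^2 \int_{v_1}^{v_2}\int_{r_1}^{r_2} \Heat(x,x,s^2)\,d\meas = \frac{n-1}{4}(v_2-v_1)\int_{s/r_2}^{s/r_1} h(1,\tau)\tau^{-1}G(\tau)\,d\tau,
\end{equation*}
which is the master formula driving both claims.

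For the first claim, I would let $r_1 \to 0^+$ (i.e.\ take $r_1 = 0$), so the upper endpoint of the inner integral becomes $+\infty$, and the right-hand side becomes exactly $\tfrac{n-1}{4}(v_2-v_1)\,\tilde{L}(s)$ in the notation of Lemma \ref{int-heat-Y-1/2}. Convergence at $\tau = +\infty$ was already verified in the discussion following \eqref{eq:Heat Integral} (the integrand is $O(\tau^{-2-\alpha})$ there), so $\tilde{L}(s)$ is well-defined. Substituting the asymptotic from Lemma \ref{int-heat-Y-1/2},
\begin{equation*}
\tilde{L}(s) = \frac{1}{(n-1)\pi}(-\log s + \log r_2) + \tilde{C}_0 + O(s),
\end{equation*}
and multiplying through by $\tfrac{n-1}{4}(v_2-v_1)$ produces the first stated asymptotic with $\tilde{C} = \tfrac{(n-1)(v_2-v_1)}{4}\tilde{C}_0$ exactly as claimed.

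For the second claim with $r_1 > 0$, as $s\to 0^+$ both endpoints $s/r_2$ and $s/r_1$ tend to $0$, so the entire integration happens in the small-$\tau$ regime where the expansion
\begin{equation*}
h(1,\tau)G(\tau) = \frac{1}{(n-1)\pi} + O(\tau)
\end{equation*}
from the proof of Lemma \ref{int-heat-Y-1/2} applies. Splitting off the leading constant gives
\begin{equation*}
\int_{s/r_2}^{s/r_1} h(1,\tau)\tau^{-1}G(\tau)\,d\tau = \frac{1}{(n-1)\pi}\log(r_2/r_1) + \int_{s/r_2}^{s/r_1}\tau^{-1}\cdot O(\tau)\,d\tau,
\end{equation*}
and the remainder is bounded by $O(s/r_1 - s/r_2) = O(s)$. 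Therefore the limit as $s\to 0^+$ equals $\tfrac{v_2-v_1}{4\pi}\log(r_2/r_1)$, which is strictly positive and finite whenever $v_1<v_2$ and $0<r_1<r_2$.

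Both parts are thus immediate consequences of \eqref{eq:Heat Integral} together with Lemma \ref{int-heat-Y-1/2}; no genuine obstacle remains, since the analytical content (small-$\tau$ expansion of $hG$, convergence at infinity, and slow variation) has already been packaged into that lemma. The only bookkeeping step is matching the constant $\tilde{C}$, which is forced by direct multiplication.
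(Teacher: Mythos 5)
Your proposal is correct and follows essentially the same route as the paper, which presents Theorem \ref{int-heat-Y:crit} precisely as a consequence of specializing \eqref{eq:Heat Integral} to $\alpha=\tfrac12$ (so $C'=\tfrac{n-1}{4}$ and $s^{-1-2\alpha}=s^{-2}$) and inserting the expansion of $\tilde{L}(s)$ from Lemma \ref{int-heat-Y-1/2}; your treatment of the $r_1>0$ case via the small-$\tau$ expansion $h(1,\tau)G(\tau)=\tfrac{1}{(n-1)\pi}+O(\tau)$ is the intended argument as well. The only cosmetic slip is the claimed decay rate $O(\tau^{-2-\alpha})$ at infinity (the true rate is $O(\tau^{-\frac{n+3}{2}+2\alpha})$), but convergence there was already established in the paper and nothing in your argument depends on the exact exponent.
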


%

\begin{rem}\label{remarkweight} Similarly in above theorems we can show that for any continuous function $\varphi(x)$ with compact support, the same results hold for 
$$
	\lim_{s \rightarrow 0}	s^k	\int\varphi(x) \Heat (x,x,s^2) d\meas. 
$$
For the statements about the nonzero limit, one needs to make sure that the support of $\varphi$ contains the singular set $\mathcal S= \{ r=0\}$.
\end{rem}

\section{Heat Kernel and Weyl's Law of $X$}
\subsection{Heat Kernel and Covering Spaces}\label{subsec:weyl}
Let $\bar{Y} = Y/\mathbb Z$ with the quotient distance $\bar \dist$ and the measure $\bar \meas$, and $\bar H (x,y,t)$ be its heat kernel, where the $\mathbb{Z}$-action is generated by the translation isomorphism $\gamma: Y \rightarrow Y, \ (r,v) \mapsto (r, v+1)$. This is also an $\RCD(0, n+1)$ space (see \cite[Remark 1.8]{PanWei}). In this subsection we study the heat kernel $\bar H$ of $\bar Y$. 

First we prove the following relation of heat kernels for covering spaces of $\RCD$ spaces which is of independent interest.  
\begin{prop}\label{prop:heat-cover} Let $(X, \dist, \meas)$ be an $\RCD(K,N)$ space for some $K \in \mathbb{R}$ and some $N \in [1, \infty)$, $\tilde X$ a connected covering space with lifted metric $\tilde \dist$ and measure $\tilde \meas$ and deck transformation $\Gamma$ (then $(\widetilde X, \tilde \dist, \tilde \meas)$ is also an $\RCD(K, N)$ space because of the same reason as in \cite[Lemma 2.8]{MW}). 
Assume that there exists a fundamental domain $D$ such that $\tilde \meas (\partial D)=0$. Then for all $\tilde x,\ \tilde y \in \tilde X$, and $x = \pi(\tilde x), \ y = \pi(\tilde y)$, we have
	\begin{equation} \Heat_X (x,y,t) = \sum_{\gamma \in \Gamma} \Heat_{\widetilde X}(\tilde x, \gamma \tilde y, t). \label{eq:heat-covering} \end{equation}
\end{prop}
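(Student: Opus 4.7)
The plan is to adapt the classical covering-space argument from \cite{L} to the $\RCD$ setting, leveraging the Gaussian estimates \eqref{eq:gauusian}--\eqref{eq:lap gaussian} and the isomorphism invariance \eqref{eq:heat isom}.

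First I would define
$$K(\tilde x, \tilde y, t) := \sum_{\gamma \in \Gamma} \Heat_{\widetilde X}(\tilde x, \gamma \tilde y, t)$$
and prove absolute, locally uniform convergence on $\widetilde X \times \widetilde X \times (0, \infty)$. The Gaussian upper bound on $\Heat_{\widetilde X}$, combined with Bishop--Gromov volume comparison on the $\RCD(K,N)$ cover, controls $\#\{\gamma : \tilde\dist(\tilde x, \gamma \tilde y) \le R\}$ by a function of at most exponential growth in $R$, which is dominated by the Gaussian decay $e^{-R^2/Ct}$ for each fixed $t>0$. The corresponding bounds on $\nabla \Heat_{\widetilde X}$ and $\Delta \Heat_{\widetilde X}$ then allow term-by-term differentiation, so $K$ is continuous, locally Lipschitz in space, and solves the heat equation on $\widetilde X$ in each variable separately.

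Next I would check that $K$ is bi-$\Gamma$-invariant. Since each deck transformation is an isomorphism of $(\widetilde X, \tilde\dist, \tilde\meas)$, \eqref{eq:heat isom} gives $\Heat_{\widetilde X}(\gamma_0 \tilde x, \gamma \tilde y, t) = \Heat_{\widetilde X}(\tilde x, \gamma_0^{-1}\gamma \tilde y, t)$, and reindexing $\gamma \mapsto \gamma_0 \gamma$ yields invariance in the first slot; a symmetric argument (using symmetry of the heat kernel) handles the second. Thus $K$ descends to a continuous function $\bar K$ on $X \times X \times (0, \infty)$. To identify $\bar K$ with $\Heat_X$, I would test against an arbitrary $f \in L^2 \cap L^\infty(X, \meas)$ and unfold the integral through the fundamental domain $D$: since $\tilde\meas(\partial D) = 0$ the sets $\{\gamma D\}_{\gamma \in \Gamma}$ tile $\widetilde X$ up to a null set, and Fubini together with the change of variable $\tilde z = \gamma \tilde y$ gives
$$\int_X \bar K(x,y,t) f(y) \, d\meas(y) = \int_{\widetilde X} \Heat_{\widetilde X}(\tilde x, \tilde z, t)\, (f \circ \pi)(\tilde z) \, d\tilde\meas(\tilde z) = \heat_t^{\widetilde X}(f \circ \pi)(\tilde x).$$
Provided the covariance identity $\heat_t^{\widetilde X}(f \circ \pi) = (\heat_t^X f) \circ \pi$ holds, the right-hand side equals $\heat_t^X f(x) = \int_X \Heat_X(x,y,t) f(y)\, d\meas(y)$, and varying $f$ forces $\bar K \equiv \Heat_X$ by continuity of both kernels.

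The main obstacle is precisely this covariance identity. In the smooth setting it is immediate from PDE uniqueness, but on an $\RCD$ space I would argue instead via the Cheeger-energy/gradient-flow characterization of $\heat_t$: because $\pi$ is a local metric-measure isomorphism, it preserves minimal relaxed slopes, hence the Laplacian acting on $\Gamma$-invariant functions, so $(\heat_t^X f) \circ \pi$ solves the heat equation on $\widetilde X$ with initial datum $f \circ \pi$. The delicate point is that $f \circ \pi$ typically does not belong to $L^2(\widetilde X)$ when $\Gamma$ is infinite, which forces me to work with the $L^\infty$-extension of the heat semigroup (available from \eqref{eq:gauusian}) and to establish uniqueness within the class of bounded $\Gamma$-invariant solutions; this is the step that requires the most care.
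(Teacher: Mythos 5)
Your proposal follows the paper's architecture for the first three steps: defining the sum $\Heat_1(x,y,t)=\sum_{\gamma}\Heat_{\widetilde X}(\tilde x,\gamma\tilde y,t)$, establishing its absolute convergence and term-by-term differentiability from the Gaussian bounds \eqref{eq:gauusian}--\eqref{eq:lap gaussian} (the paper gets summability slightly more cleanly from the parabolic Harnack inequality, summing over the disjoint balls $B_s(\gamma\tilde y)$, rather than from an orbit-counting/packing argument, but both work), and unfolding integrals through the fundamental domain with $\tilde\meas(\partial D)=0$ to obtain $\int_X\Heat_1(x,y,t)f(y)\,d\meas(y)=\int_{\widetilde X}\Heat_{\widetilde X}(\tilde x,\tilde z,t)(f\circ\pi)(\tilde z)\,d\tilde\meas(\tilde z)$. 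Up to this point you and the paper agree.

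The divergence, and the gap, is in the final identification, exactly where you say "this is the step that requires the most care." You reduce the proposition to the covariance identity $\heat_t^{\widetilde X}(f\circ\pi)=(\heat_t^Xf)\circ\pi$ and propose to prove it by uniqueness of bounded solutions of the Cauchy problem on the (noncompact, infinite-measure) cover. That uniqueness statement is not available off the shelf in the $\RCD$ setting at the level of generality you need: you would have to specify in what weak sense $(\heat_t^Xf)\circ\pi$ solves the heat equation globally on $\widetilde X$ (it is only locally in the domain of the Laplacian, and lies outside $L^2(\widetilde X,\tilde\meas)$), verify the attainment of the initial datum in a class where uniqueness holds, and then prove an $L^\infty$-uniqueness theorem (e.g.\ via stochastic completeness) for such solutions. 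None of this is carried out, so as written the argument is incomplete. The paper circumvents this entirely: it shows that $\Heat_1(x,\cdot,t)$ lies in the global domain $D(\Delta)$ on $X$ with quantitative $L^2$ bounds obtained by unfolding (the key identity $\int_X\Heat_1(x,y,t)^2\,d\meas(y)=\Heat_1(x,x,2t)<\infty$ and its analogues for $\nabla_y\Heat_1$ and $\Delta_y\Heat_1$), uses the unfolding identity only to get the pointwise approximate-identity property $\heat^1_tf(x)\to f(x)$, and then identifies $\Heat_1$ with $\Heat_X$ by a Duhamel computation of $\int_0^t\frac{d}{ds}\int_X\phi_R^2\,\Heat_1(x,z,t-s)\Heat_X(z,y,s)\,d\meas(z)\,ds$ with Lipschitz cutoffs $\phi_R$, where after integration by parts only $\nabla\phi_R$-terms survive and vanish as $R\to\infty$. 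That route stays entirely within $L^2$ estimates already established and requires no parabolic uniqueness theorem; if you want to salvage your route, you should either prove the $L^\infty$-uniqueness statement you invoke or replace that step by this cutoff Duhamel argument.
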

We call $D$ as above a metric measure fundamental domain (Definition \ref{mmdomain}). 
See also Remarks \ref{stbr} and \ref{Ye}.

When $(X, \dist, \meas)$ is an unweighted Riemnnian manifold with Ricci curvature bounded from below, Proposition \ref{prop:heat-cover} is proved in \cite[Pages 182-186]{L} (see also \cite[Proposition 2.12]{B}). The same strategy works in the setting of Proposition \ref{prop:heat-cover}, but  
it is more technical. We write the proof in the appendix \S \ref{appp}. From now on we will abuse notation and simply denote $[x]$ by $x$. 

Clearly  our $\bar Y$ has a fundamental domain $D=(0, \infty) \times (0,1)$ with $\meas (\partial D)=0$, thus we can apply Proposition~\ref{prop:heat-cover}  to $\bar Y$. Combining 
with Lemma~\ref{r-distance},  we can then quickly get 
\begin{prop}
	 For $x=(r, v)\in \bar Y$, as $s \rightarrow 0$,  \[
	\bar \meas (B^{\bar Y}_s(x)) \bar \Heat (x,x,s^2) = h(r,s) + O(e^{-\frac{C(r_0)}{6s^2} }). \] Moreover
 \begin{equation}  \label{equ:heatYbar}
		\lim_{s \rightarrow 0} s^k \int_0^{1} \int_0^{r_0}\bar \Heat (x,x,s^2) d \bar \meas  =  \lim_{s \rightarrow 0} s^k \int_0^{1} \int_0^{r_0} \Heat (x,x,s^2) d \meas \end{equation}
	for any $0<r_0<\infty$ and all cases of $\alpha, k$ in Theorem~\ref{int-heat-Y}. 
\end{prop}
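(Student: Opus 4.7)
The plan is to apply the covering-space heat kernel formula \eqref{eq:heat-covering} of Proposition~\ref{prop:heat-cover} to $\pi: Y \to \bar Y = Y/\mathbb{Z}$, taking $D = [0, \infty) \times [0, 1]$ as metric measure fundamental domain (its boundary lies in two vertical half-lines, hence has $\meas$-measure zero by Lemma~\ref{measure density}). Lifting $x \in \bar Y$ to $\tilde x = (r, v) \in Y$, the formula gives
\[
\bar \Heat(x, x, s^2) = \Heat(\tilde x, \tilde x, s^2) + \sum_{l \neq 0} \Heat(\tilde x, \gamma^l \tilde x, s^2),
\]
so the task reduces to estimating the off-diagonal sum and comparing $\bar \meas(B^{\bar Y}_s(x))$ with $\meas(B^Y_s(\tilde x))$.

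For the off-diagonal sum, I would use the Gaussian upper bound \eqref{eq:gauusian} on $Y$ (with $C_2 = 0$, since $Y$ is $\RCD(0, n+1)$) together with Lemma~\ref{r-distance}, which gives $\dist(\tilde x, \gamma^l \tilde x) \ge C(r) |l|^{1/(1+2\alpha)}$. Since $C(r) = \min\{C_1, C_2 r^{-2\alpha}\}$ is non-increasing in $r$, we have $C(r) \ge C(r_0)$ whenever $r \le r_0$, so multiplying the covering formula by $\meas(B_s(\tilde x))$ and summing yields
\[
\meas(B_s(\tilde x)) \sum_{l \neq 0} \Heat(\tilde x, \gamma^l \tilde x, s^2) \le 2 C_1 \sum_{l \ge 1} \exp\!\left( -\frac{C(r_0)^2 l^{2/(1+2\alpha)}}{(4+\epsilon) s^2} \right) = O\!\left( e^{-C(r_0)/(6 s^2)} \right),
\]
the $l = 1$ term dominating, where the $C(r_0)$ on the right denotes (by abuse of notation) a positive constant depending only on $r_0$. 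Moreover, for $s \le C(r_0)/2$ Lemma~\ref{r-distance} also gives $\gamma^l B_s(\tilde x) \cap B_s(\tilde x) = \emptyset$ for all $l \neq 0$, so $\pi$ is injective on $B_s(\tilde x)$ and $\bar \meas(B^{\bar Y}_s(x)) = \meas(B_s(\tilde x))$. Combined with $h(r, s) = \meas(B_s(\tilde x)) \Heat(\tilde x, \tilde x, s^2)$, this gives the first identity.

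For the integral statement, I would divide the pointwise identity by $\meas(B_s(\tilde x))$ and integrate over $[0,1] \times [0, r_0] \subset \bar Y$, obtaining
\[
\int_0^1 \int_0^{r_0} \bar \Heat(x, x, s^2) \, d\bar \meas = \int_0^1 \int_0^{r_0} \Heat(\tilde x, \tilde x, s^2) \, d\meas + O\!\left(e^{-C(r_0)/(6 s^2)}\right) \int_0^1 \int_0^{r_0} \frac{d\meas}{\meas(B_s(\tilde x))}.
\]
To deduce \eqref{equ:heatYbar} after multiplying by $s^k$, it suffices to show that the last integral grows only polynomially in $s^{-1}$. Splitting into $r \ge 2s$ and $r < 2s$ and applying Proposition~\ref{prop:vol} ($\meas(B_s(\tilde x)) \gtrsim s^2 r^{(n-1)/2}$ in the former and $\gtrsim s^{(n+3)/2}$ in the latter) yields the polynomial bound, which the exponential prefactor kills.

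The main technical obstacle is the uniformity of these estimates as the singular axis $r = 0$ is approached, where the metric degenerates. It is resolved by the monotonicity $C(r) \ge C(r_0)$ from Lemma~\ref{r-distance} (ensuring a uniform exponential gain on $[0, r_0]$) and the $r = 0$ formula of Proposition~\ref{prop:vol} (which keeps the error integrand integrable near the singular set). Apart from these uniformity issues, the argument is a direct adaptation of the classical covering-space heat-kernel computation in \cite{L}.
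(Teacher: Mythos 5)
Your proposal is correct and follows essentially the same route as the paper: apply the covering-space formula of Proposition~\ref{prop:heat-cover} to $\pi:Y\to\bar Y$, control the off-diagonal sum $\sum_{l\neq 0}q(x,\gamma^l x,s)$ by combining the Li--Yau upper bound \eqref{eq:gauusian} with the uniform lower bound $\dist(x,\gamma^l x)\ge C(r_0)|l|^{1/(1+2\alpha)}$ from Lemma~\ref{r-distance} on $\{r\le r_0\}$, and then deduce the integral identity from the fact that $1/\meas(B_s(x))$ grows only polynomially in $s^{-1}$. Your added remarks on the injectivity of $\pi$ on small balls (so that $\bar\meas(B^{\bar Y}_s(x))=\meas(B_s(\tilde x))$) and on the integrability of $1/\meas(B_s(x))$ near the singular axis just make explicit points the paper treats tersely.
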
 
\begin{proof}
When $s < < 1,\ \bar \meas (B^{\bar Y}_s(x)) = \meas (B^{Y}_s(x))$, therefore \begin{eqnarray*}
\bar \meas (B^{\bar Y}_s(x)) \bar H(x,x,s^2) & = & \meas (B^{Y}_s(x)) \left(\Heat (x,x,s^2) + \sum_{l \in \mathbb Z, l \not= 0} \Heat (x, \gamma^l x, s^2)\right)  \\
& = & q(x,x,s) + \sum_{l \in \mathbb Z, l \not= 0} q(x, \gamma^l x,s).
\end{eqnarray*}
By the Li-Yau estimate \eqref{eq:gauusian} \[
q(x, \gamma^l x,s) \le C(n) \exp (-\tfrac{\bar \dist^2(x,\  \gamma^l x)}{6s^2}). \]
By Lemma~\ref{r-distance}, $$ \dist(x, \  \gamma^l x) \ge C(r_0) |l|^{\frac{1}{1+2\alpha}}$$ for all $x = (r,v)$ with $0\le r \le r_0$ and all $l \in \mathbb Z$.  Therefore 
\[ \sum_{l \in \mathbb Z, l \not= 0} q(x, \gamma^l x,s) \le C(p, \alpha) e^{-\frac{C(r_0)}{6s^2} }  \ \ \mbox{for} \ \ s \in (0,1) \]
is exponentially decaying as $s \rightarrow 0$. This proves the first equation. 

The second equation follows from the first since 
 $\frac{1}{\bar \meas (B^{\bar Y}_s(x))}$ grows at most polynomially of degree $\tfrac{n+3}{2}$. 
\end{proof}

\subsection{Eigenvalues of $\bar Y$ when $\alpha = \frac 12$}
We use separation of variables (see e.g. \cite[Page 41]{Chavel})  to study the eigenvalues of $\bar Y = [0, \infty) \times \mathbb{S}^1$. First we do the computation with general $\alpha$. Recall $\bar Y$ equipped with the metric $g = dr^2 + r^{-4\alpha} d\theta^2$ (Theorem \ref{dist}), the weighted measure $d\bar \meas = c r^{\frac{n-1}{2} -2\alpha}drd\theta$ (Lemma \ref{measure density}). 
Therefore the weighted Laplacian is $$\frac{\partial^2}{\partial r^2} + \frac{1}{r}\left(\frac{n-1}{2} -2\alpha\right) \frac{\partial}{\partial r} + r^{4\alpha} \frac{\partial^2}{\partial \theta^2}.$$
Write the eigenfunction $u = \varphi (r) \psi (\theta)$, then we have 
\[\psi'' = -k^2 \psi, \ \ \ k \in \mathbb Z\]
and \[
\varphi'' (r) + \left(\tfrac{n-1}{2} -2\alpha\right) \frac 1r \varphi'(r) - k^2  r^{4\alpha} \varphi = -\lambda \varphi.  \]
The change of variable $y(r) = \varphi(r) r^{\frac{n-1}{4} -\alpha}$ gives \[
y''(r) = \left[ \left(\tfrac{n-1}{4} -\alpha\right) \left(\tfrac{n-1}{4} -\alpha -1\right) \frac{1}{r^2} +k^2  r^{4\alpha} -\lambda \right] y. \]
When $n=1$ or $9$ and $\alpha = \frac 12$, this becomes \[ y''(r) = \left[ \tfrac 34  \tfrac{1}{r^2} +k^2  r^{2} -\lambda \right] y.
\]
For $n=1$, it corresponds to the  Grushin half cylinder with unweighted measure
 studied in \cite{Boscain-PS2016}. Surprisingly for $n=9$ it gives the same constant $\tfrac 34$ in above.  For $k \not= 0$,  with the change of variable $z = |k| r^2$, and divide by $4k^2r^2$, we have \footnote{In \cite{Boscain-PS2016} the sentence after equation (3.2) is not exactly correct; we would like to thank Prof. Dario Prandi for communicating this and explaining the correction.} 
\[ \frac{\partial^2y}{\partial z^2} + \frac{z}{2} \frac{\partial y}{\partial z}  = \left(\frac{3}{16z^2} + \frac 14 -\frac{\lambda}{4|k|z} \right) y. 
\]
Letting $\phi (z) = z^{\frac 14} y(z)$ gives 
\[
\phi'' = \left(\frac 14 -\frac{\lambda}{4|k|z} \right) \phi,  \]
which is the Whittaker equation.  Following \cite{Boscain-PS2016}, the eigenvalues are $\lambda_{n, k} = 4 |k| n$ for $n \in \mathbb N$, $k \in \mathbb Z/\{0\}$ ($k=0$ leads to the continuous spectrum).
\begin{rem}\label{bakryemery}
If $(\bar Y, \bar \dist, \bar \meas)$ is an $\RCD(K, N)$ space, then the $N$-Bakry-\`Emery Ricci curvature on the regular set of $\bar Y$ is bounded below by $K$. For our $\bar Y$  with $\alpha = \frac 12$, the $N$-Bakry-\`Emery Ricci curvature is nonnegative if and only if $N \ge 10$. In particular the optimal dimension to have nonnegative Ricci curvature in the RCD sense is indeed $10$, i.e., $n=9$. The same observation is valid for $Y$.

Therefore this allows us to conclude that the optimal dimension to have a lower bound in the RCD sense for our $2$-(Hausdorff) dimensional spaces is also equal to $10$ because of the following two facts;
\begin{itemize}
\item any tangent cone at any point of any $\RCD(K, N)$ space is an $\RCD(0, N)$ space;
\item any tangent cone at any singular point of such spaces is isometric to $Y$.
\end{itemize}

\end{rem}

\subsection{Heat Kernel of Compact Space $X$}  \label{heat-X}
We consider the compact space $X$ obtained by perturbing the warping function of $\bar Y$ to $\tilde h = constant$ for $r\ge 2$, and then double. Precisely let $\widetilde Y$ be the finite cylinder $[0,3] \times \mathbb{S}^1$ with the singular  metric $dr^2 + \tilde h(r)^2 dv^2$, where 
\[ \tilde h(r) = \left\{ \begin{array}{ll} r^{-2\alpha} &  0<r \le 1 \\
		\tfrac 12 & 2 \le r \le 3 \end{array} \right. \] 
	and connect smoothly between $r=1$ and $r=2$ with decreasing and convex function. Correspondingly, the measure $d\tilde \meas= {\tilde h}^{-\frac{n-1}{4\alpha}+1} dr\, dv$. Let $X = \widetilde Y \cup  \widetilde Y$ by gluing at the smooth end $r=3$. Then $\widetilde Y$ and $X$ are $\RCD (K, n+1)$ spaces with $K$ some negative number. 

\begin{rem} 
	The above $X$ can be constructed as the limit of a sequence of closed manifolds $N_i$ with $\mathrm{Ric}\ge K$, where $K<0$ (see \cite[Remark 1.8]{PanWei}); then $\widetilde{Y}\subseteq X$ is the limit of a sequence of open convex subsets in $N_i$. We also point out that nonnegative Ricci curvature cannot hold on $N_i$; otherwise, the universal cover $\widetilde{N_i}$ would split off a line isometrically, resulting in $X$ being isometric to a standard bounded cylinder. 
\end{rem}

 Since $\widetilde Y$ is a smooth weighted Riemannian manifold near the end $r=3$, each eigenfunction on $\widetilde Y$ is smooth near the smooth end and it satisfies the Neumann boundary value condition because of the elliptic regularity theorem. 
In particular the heat kernel $\Heat_{\widetilde Y}$ of $\widetilde Y$ is smooth near the smooth end with the same boundary value condition by (\ref{eq:heat exp}).

	By the symmetry of $X$, the heat kernel $\Heat_X$ of $X$ can be identified with the heat kernel of $\widetilde Y$ with the Neumann boundary condition at the smooth end. Now the heat kernel of $\widetilde Y$ can be constructed by gluing the heat kernel of $\bar Y$,  with the Neumann heat kernel of cylinder $\hat Y = [1/2,3] \times \mathbb{S}^1$ as a smooth weighted manifold. The warping function $\hat h$ of $\hat Y$ is defined as  $\hat h = \tilde h$ for $r \in [\tfrac 12, 3]$ and the measure $d\hat \meas= {\hat h}^{-\frac{n-1}{4\alpha}+1} dr\, dv$. 
	
	Following \cite{APS}, let $\rho(a,b) \in C^\infty (\mathbb R)$ be a smooth increasing cut-off function such that $\rho \equiv 0$ when $r\le a$ and $\rho \equiv 1$ when $r\ge b$. 
	
	Define $\phi_1, \phi_2, \psi_1, \psi_2$ as follows. 
	$$\psi_1 = 1-\psi_2, \quad \psi_2 = \rho (1+\tfrac 14, 2- \tfrac 14), \quad \phi_1 =1 - \rho(2,3), \quad \phi_2 = \rho(\tfrac 12, 1).$$ Then 
	$$\psi_1 + \psi_2 \equiv 1,\quad \phi_i \equiv 1 \text{ on supp}\, \psi_i,\quad \dist_{\mathbb R} ({\rm supp}\, \phi_i', {\rm supp}\, \psi_i) \ge \tfrac 14.$$ 
	It follows that
\begin{equation}\label{eq:heat distance}
\tilde \dist ({\rm supp}\, \phi_i', {\rm supp}\, \psi_i) \ge \tfrac 14.
\end{equation}
Here $\tilde \dist$ is the distance of $\tilde Y$.
	
	Let $$\HeatK(x,y,t) = \phi_1(r(x)) \Heat_{\bar Y}(x,y,t) \psi_1 (r(y)) + \phi_2(r(x)) \Heat_{\hat Y}(x,y,t) \psi_2 (r(y)),$$  where $H_{\hat Y}(x,y,t)$ is the heat kernel of the smooth weighted manifold $\hat Y$ with Neumann boundary condition. Then $\HeatK$ is a parametrix of the heat equation on $\tilde Y$. Indeed
\begin{equation}\label{eq:heat parametrix}
(\partial_t -\Delta_x)\HeatK (x, y, t)=-\phi_1'' \Heat_{\bar Y}\psi_1 - \langle \nabla \phi_1, \nabla \Heat_{\bar Y}\rangle \psi_1 -\phi_2'' \Heat_{\widetilde Y}\psi_2 - \langle \nabla \phi_2, \nabla \Heat_{\widetilde Y}\rangle \psi_2,
\end{equation}
where $\Delta_x$ means the Laplacian acting on the $x$-variable. Similarly the gradient $\nabla$ is with respect to the $x$-variable as well. Note that by our choice of the cut-off functions, the right hand side of \eqref{eq:heat parametrix} is away from the singular set in the $x$-variable, and, by the regularity theory, the differentiations can be taken in the usual sense. Let $$\HeatQ (x, y, t)=(\partial_t -\Delta_x)\HeatK (x, y, t).$$ 
 By \eqref{eq:heat parametrix}, $\HeatQ(x, y, t)$ is nonzero only when $r(x) \in {\rm supp}\, \phi_1'$ and $r(y) \in {\rm supp}\,\psi_1$ or when $r(x) \in {\rm supp}\, \phi_2'$ and $r(y) \in {\rm supp}\,\psi_2$. By the Li-Yau estimates \eqref{eq:gauusian}, \eqref{eq:grad gaus} and \eqref{eq:heat distance}, we deduce for $0\leq s <t$ and  $r(z) \in {\rm supp}\, \phi_1'\cup {\rm supp}\, \phi_2' \subset [1/2, 1] \cup [2, 3]$ ,
\begin{eqnarray}\label{estimateq}
| \HeatQ(z, y, t-s)| & \leq  & \frac{C_1}{(t-s)\meas (B_{\sqrt{t-s}}(z))}\exp\left( -\frac{1}{90(t-s)}+C_2(t-s)\right) \nonumber \\
& \leq & \frac{C_3}{(t-s)}\exp\left( -\frac{1}{90(t-s)}+C_2(t-s)\right) \nonumber \\
& \leq & C_4\exp\left( -\frac{1}{180(t-s)}+C_2(t-s)\right) \leq C_5 \exp\left( -\frac{1}{180t}\right)
\end{eqnarray}
for $0\leq s <t\leq 1$.
Duhamel Principle expresses the heat kernel $\Heat_{\tilde Y}(x,y,t)$ in terms of the parametrix $\HeatK (x, y, t)$ (i.e. the approximate solution) and an error term.

\begin{lem}[Duhamel Principle]\label{Duhamelpri}
We have 
\begin{equation}\label{eq:heat duhamel}
\Heat_{\widetilde Y}(x,y,t)=\HeatK(x,y,t) - \int_0^t \int_{\widetilde Y} \Heat_{\tilde Y}(x,z,s) \HeatQ(z, y, t-s) d\tilde \meas(z) ds.
\end{equation} 
\end{lem}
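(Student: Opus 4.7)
The plan is to deduce the kernel identity from the classical Duhamel principle, testing both sides against an arbitrary function $f$ and then reading off the pointwise statement.

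\textbf{Step 1: Approximate identity.} First I would verify that as $t\to 0^+$, the kernel $\HeatK(\cdot,\cdot,t)$ is an approximate identity, i.e., $\int_{\widetilde Y}\HeatK(x,y,t)f(y)d\tilde\meas(y)\to f(x)$ for every $f\in C_b(\widetilde Y)$. The key point is that $\phi_i\equiv 1$ on $\mathrm{supp}\,\psi_i$ together with $\psi_1+\psi_2\equiv 1$: if $r(y)\in \mathrm{supp}\,\psi_i$ then $\phi_i(r(y))=1$, and since $\Heat_{\bar Y}(\cdot,y,t)$ and $\Heat_{\hat Y}(\cdot,y,t)$ concentrate near $y$, continuity of $\phi_i$ eliminates the cutoff in the limit and reassembles $\psi_1+\psi_2=1$.

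\textbf{Step 2: Inhomogeneous heat equation for the parametrix.} Given $f\in L^2(\widetilde Y,\tilde\meas)$, set
\[
v(x,t):=\int_{\widetilde Y}\HeatK(x,y,t)f(y)d\tilde\meas(y),\qquad g(x,t):=\int_{\widetilde Y}\HeatQ(x,y,t)f(y)d\tilde\meas(y).
\]
By \eqref{eq:heat parametrix}, and because $\HeatQ(\cdot,y,t)$ is supported where $r(x)\in\mathrm{supp}\,\phi_1'\cup\mathrm{supp}\,\phi_2'\subset[1/2,1]\cup[2,3]$, a region where $\widetilde Y$ is a smooth weighted Riemannian manifold, one has $(\partial_t-\Delta_x)v=g$ in the classical sense there and distributionally elsewhere, with $v(\cdot,0)=f$ by Step 1. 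The estimate \eqref{estimateq} will be used throughout to guarantee that $g$ and all later iterated integrals are absolutely convergent (in fact with exponential decay as $t\to 0^+$).

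\textbf{Step 3: Apply Duhamel.} Writing $u(x,t):=\heat_t^{\widetilde Y}f(x)=\int_{\widetilde Y}\Heat_{\widetilde Y}(x,y,t)f(y)d\tilde\meas(y)$, the difference $w:=v-u$ satisfies $(\partial_t-\Delta_x)w=g$ with $w(\cdot,0)=0$. The standard Duhamel formula on a compact $\RCD$ space (via the $L^2$-semigroup generated by $\Delta$, or equivalently the spectral expansion \eqref{eq:heat exp}) then yields
\[
w(x,t)=\int_0^t\heat_{t-s}^{\widetilde Y}\bigl(g(\cdot,s)\bigr)(x)\,ds=\int_0^t\int_{\widetilde Y}\Heat_{\widetilde Y}(x,z,t-s)\,g(z,s)\,d\tilde\meas(z)\,ds.
\]
Substituting the definition of $g$, invoking Fubini (justified by \eqref{estimateq} and the Gaussian bound \eqref{eq:gauusian}), and changing variables $s\mapsto t-s$ gives
\[
v(x,t)-u(x,t)=\int_0^t\!\!\int_{\widetilde Y}\Heat_{\widetilde Y}(x,z,s)\left[\int_{\widetilde Y}\HeatQ(z,y,t-s)f(y)\,d\tilde\meas(y)\right]d\tilde\meas(z)\,ds.
\]

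\textbf{Step 4: Extract the pointwise identity.} Both sides are continuous kernels tested against an arbitrary $f\in C_c(\widetilde Y)$, so Lebesgue differentiation and continuity of $\Heat_{\widetilde Y}$, $\HeatK$, $\HeatQ$ upgrade the identity to the pointwise statement \eqref{eq:heat duhamel}.

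\textbf{Main obstacle.} The delicate step is Step~2, where one must actually make sense of $(\partial_t-\Delta_x)\HeatK$ on a space which is only $\RCD$. The resolution, already foreshadowed in the excerpt, is that $\mathrm{supp}\,\phi_i'$ sits a definite distance $\tfrac14$ from the singular set, so the offending derivatives land in a smooth weighted-manifold region. Controlling the tail behaviour then reduces to the Li--Yau estimate \eqref{eq:gauusian} and its gradient companion \eqref{eq:grad gaus}, exactly as packaged in \eqref{estimateq}; this simultaneously takes care of the absolute convergence needed for Fubini in Step~3.
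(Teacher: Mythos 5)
Your argument is correct and is essentially the paper's own: the paper defines the right-hand side as a kernel $\Heat_2$, shows it is an approximate identity as $t\to 0^+$, verifies by the same integration by parts in $s$ (with boundary terms controlled by \eqref{estimateq}) that it solves the heat equation, and concludes by uniqueness of the heat flow — which is precisely the derivation of the variation-of-constants formula you invoke in Step 3. The only difference is the direction of presentation (you derive the formula for $v-u$ from the semigroup Duhamel identity, while the paper checks that the candidate kernel solves the equation and appeals to uniqueness), so the two proofs rest on identical ingredients and estimates.
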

This is well known in the smooth case; see \cite{Cheeger1} for the most general formulation. The proof of the lemma goes by directly checking the heat equation. Namely, it follows from a direct calculation that the right hand side of (\ref{eq:heat duhamel}) 
solves the heat equation. Then the lemma follows by a similar argument as in  Proposition \ref{prop:heat-cover}. See \S \ref{duh} for the detail.

Plugging (\ref{estimateq}) into \eqref{eq:heat duhamel}, we obtain
\begin{equation}\label{eq:heat gluing}
\Heat_{\widetilde Y}(x,y,t)=\HeatK (x,y,t) + O(e^{-C/t})
\end{equation}
for $C=\frac{1}{180}$. From here, we obtain by using Theorems \ref{int-heat-Y} and \ref{int-heat-Y:crit}

\begin{thm}  \label{int-heat-X}
	For $\alpha > \frac 12$, $k = 2\alpha + 1$, the following limit exists and
	\begin{equation}
0 < 	\lim_{s \rightarrow 0}	s^k	\int_X \Heat_X(x,x,s^2) d\meas  = 2 \lim_{s \rightarrow 0}	s^k	\int_0^1 \int_0^3 \Heat_Y(x,x,s^2) d\meas < \infty.  \label{eq:heatX-Y}
	\end{equation}
Here $v$ is from $0$ to $1$, $r$ is from $0$ to $3$, and $3$ can be replaced by any positive number.

For $\alpha = \frac 12, \ k =2$, and $s$ small,
\begin{equation}
	 		s^2	\int_X \Heat_X(x,x,s^2) d\meas = 2	s^2		\int_0^1 \int_0^3 \Heat_Y(x,x,s^2) d\meas = \frac{1}{2\pi}(-\log s) + O(1). \label{heatX-1/2}
\end{equation}
For $\alpha = \frac 12, \ k > 2$, 
\begin{eqnarray*}
		\lim_{s \rightarrow 0}	s^k	\int_X \Heat_X(x,x,s^2) d\meas  = 0.
\end{eqnarray*}
For $0< \alpha < \frac 12, \ k = 2$, 
\begin{eqnarray*}
0 < 	\lim_{s \rightarrow 0}	s^2	\int_X \Heat_X(x,x,s^2) d\meas  < \infty .
\end{eqnarray*}

\end{thm}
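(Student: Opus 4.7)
The plan is to reduce the diagonal heat trace on $X$ to the integrals on $Y$ already analyzed in Theorems \ref{int-heat-Y} and \ref{int-heat-Y:crit}, by chaining together three reductions: the doubling symmetry $X = \widetilde Y \cup \widetilde Y$, the Duhamel identity for the parametrix $\HeatK$, and the support decomposition of $\HeatK$ into a ``singular'' and a ``smooth'' piece.

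First I would exploit the $\mathbb Z_2$-symmetry that swaps the two copies of $\widetilde Y$: decomposing each eigenfunction on $X$ into its symmetric and antisymmetric parts yields $\operatorname{Spec}(X) = \operatorname{Spec}_N(\widetilde Y) \cup \operatorname{Spec}_D(\widetilde Y)$. Since the Dirichlet--Neumann difference contributes only lower-order boundary corrections at the smooth end $r=3$, to the relevant leading order
\[
\int_X \Heat_X(x,x,s^2)\,d\meas = 2 \int_{\widetilde Y} \Heat_{\widetilde Y}(x,x,s^2)\,d\tilde\meas + O(s^{-2}),
\]
with $\Heat_{\widetilde Y}$ the Neumann heat kernel. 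Feeding Duhamel's identity \eqref{eq:heat duhamel} together with the exponential estimate \eqref{estimateq} into this gives
\[
\int_{\widetilde Y} \Heat_{\widetilde Y}(x,x,s^2)\,d\tilde\meas = \int_{\widetilde Y} \HeatK(x,x,s^2)\,d\tilde\meas + O(e^{-C/s^2}),
\]
and this exponential error is harmless after multiplication by any polynomial $s^k$.

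Second, on the diagonal the parametrix simplifies, since $\phi_i \equiv 1$ on $\operatorname{supp}\psi_i$ and $\psi_1+\psi_2 \equiv 1$, to
\[
\HeatK(x,x,t) = \psi_1(r(x))\,\Heat_{\bar Y}(x,x,t) + \psi_2(r(x))\,\Heat_{\hat Y}(x,x,t).
\]
The $\psi_2$-term is supported on the smooth compact weighted Riemannian region $[1+\tfrac14,3]\times \mathbb S^1 \subset \hat Y$, so the classical on-diagonal heat-kernel asymptotic gives $\int \psi_2\,\Heat_{\hat Y}(x,x,s^2)\,d\tilde\meas = O(s^{-2})$ as $s\to 0^+$; this contributes $O(s^{k-2})$ after multiplication by $s^k$, which is negligible when $k>2$ and absorbed into $O(1)$ when $k=2$. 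For the $\psi_1$-term I would apply Remark \ref{remarkweight} to Theorems \ref{int-heat-Y} and \ref{int-heat-Y:crit} with $\varphi=\psi_1$, a continuous function of compact support whose support contains the singular set $\{r=0\}$, and use \eqref{equ:heatYbar} to pass from $\bar Y$ back to $Y$ (the argument for \eqref{equ:heatYbar} extends verbatim with a continuous weight inserted, because the off-diagonal deck-transformation terms remain uniformly exponentially small on $\operatorname{supp}\psi_1$). The middle identity of Theorem \ref{int-heat-Y} guarantees that truncating in $r$ away from $0$ contributes zero to the limit, so the upper endpoint $3$ in \eqref{eq:heatX-Y} can be replaced by any positive number.

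Putting the cases together: for $\alpha>\tfrac12$ and $k=2\alpha+1$ the $\psi_1$-term produces a positive finite limit while the $\psi_2$-term vanishes, giving \eqref{eq:heatX-Y}; for $\alpha=\tfrac12$ and $k=2$ Theorem \ref{int-heat-Y:crit} produces the $\tfrac{1}{4\pi}(-\log s)$ growth and, after doubling, the stated $\tfrac{1}{2\pi}(-\log s) + O(1)$ expansion (\ref{heatX-1/2}); for $\alpha=\tfrac12$ and $k>2$ both pieces vanish (the singular one growing only like $s^{-2}\log s$); for $0<\alpha<\tfrac12$ and $k=2$ both pieces yield finite limits summing to a positive one. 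The main obstacle I anticipate is not analytic depth but careful bookkeeping of the supports of $\phi_i,\psi_i$: the exponential gain in \eqref{estimateq}, powered by the definite separation \eqref{eq:heat distance} and the Li--Yau gradient bound \eqref{eq:grad gaus}, is the mechanism that forces the cross-support Duhamel error to decay faster than every polynomial rate $s^{-k}$, and thereby keeps the singular and smooth asymptotics cleanly separated.
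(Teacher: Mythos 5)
Your proposal follows the paper's proof essentially verbatim: the symmetry reduction to $\widetilde Y$, the Duhamel identity \eqref{eq:heat duhamel} with the exponential bound \eqref{estimateq} to replace $\Heat_{\widetilde Y}$ by the parametrix $\HeatK$ up to $O(e^{-C/s^2})$, the diagonal simplification $\HeatK(x,x,t)=\psi_1\Heat_{\bar Y}+\psi_2\Heat_{\hat Y}$, the classical treatment of the smooth $\hat Y$ piece, and Remark \ref{remarkweight} together with \eqref{equ:heatYbar} for the singular piece. The one quantitative slip is the $O(s^{-2})$ you assign to the Neumann--Dirichlet discrepancy at the doubling locus $r=3$: in the case $0<\alpha<\tfrac12$, $k=2$ the main term is itself $\asymp s^{-2}$, so an error of that size would not give existence of the limit. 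What you actually need (and easily get, either from the standard boundary term in the heat-trace expansion of a smooth weighted surface with boundary, or directly from the Gaussian bound \eqref{eq:gauusian} applied to $\Heat_X(x,\sigma x,s^2)$, which decays like $e^{-c\,\dist(x,\{r=3\})^2/s^2}$ outside an $O(s)$-collar of the gluing circle) is that this discrepancy is $O(s^{-1})$, which is harmless in all four cases.
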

	
\begin{proof}
By the symmetry of $X$ and \eqref{eq:heat gluing} we have
\begin{eqnarray}
	s^k	\int_X \Heat_X(x,x,s^2) d\meas & = & 2 s^k	\int_{\tilde Y} \Heat_{\tilde Y}(x,x,s^2) d\meas \nonumber \\
 &=& 2 s^k	\int_{\bar Y} \Heat_{\bar Y}(x,x,s^2)\psi_1(r(x)) d\meas \nonumber \\
 & & + 2 s^k	\int_{\hat Y} \Heat_{\hat Y}(x,x,s^2)\psi_2(r(x)) d\meas  + O(e^{-C'/s^2}) \nonumber \\
 &=& 2 s^k	\int_0^1 \int_0^3 \Heat_Y(x,x,s^2)\psi_1(r(x)) d\meas \label{eq:heatXapp}\\
 & & + 2 s^k	\int_{\hat Y} \Heat_{\hat Y}(x,x,s^2)\psi_2(r(x)) d\meas  + O(e^{-C'/s^2}), \nonumber
	\end{eqnarray}
where we have made use of \eqref{equ:heatYbar} in the last identity.
The first term can be dealt with by Theorems \ref{int-heat-Y} and \ref{int-heat-Y:crit} (more specifically Remark \ref{remarkweight}), while the second term is classical (or we can use \cite{AmbrosioHondaTewodrose}). The desired results follow.
\end{proof}	
		
		\subsection{Weyl's Law of $X$}\label{heat=X}
		
We are now in a position to introduce our main results, singular Weyl's laws on compact Ricci limit, thus $\RCD(K, N)$ spaces $(X, \dist, \meas)$ constructed in the previous subsection with the parameter $\alpha$.
In order to introduce a surprising result (Corollary \ref{cor:weyl sing}), we prepare the following.
\begin{thm}\label{thm:weyl}
Let $\alpha>\frac{1}{2}$ and $k=2\alpha+1$. Then  
\begin{equation}\label{eq:weak limit}
s^k\Heat (x,x,s^2)d\meas(x) \to c\cdot  1_{\mathcal{S}}\, d\mathcal{H}^k,\quad \text{weakly as $s \to 0^+$}
\end{equation}
for some constant $c>0$, where $\mathcal{S}=X \setminus \mathcal{R}_2$ is the singular set. Moreover the support of $1_{\mathcal{S}}\, d\mathcal{H}^k$ coincides with $\mathcal{S}$.
\end{thm}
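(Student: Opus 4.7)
The plan is to interpret the statement as weak convergence of finite Borel measures $\mu_s := s^k\Heat(x,x,s^2)\,d\meas(x)$ on the compact space $X$, and to identify every weak subsequential limit as $c\cdot 1_{\mathcal{S}}\,d\mathcal{H}^k$. By Theorem \ref{int-heat-X} the total mass $\mu_s(X)$ converges to a positive finite constant as $s\to 0^+$, so $\{\mu_s\}$ is tight on compact $X$ and any subsequence has a further weakly convergent sub-subsequence. The first step is to show that any such weak limit $\mu$ is supported on $\mathcal{S}$. Using the parametrix estimate \eqref{eq:heat gluing} together with the decomposition in \eqref{eq:heatXapp}, for every $\varepsilon>0$ the $\mu_s$-mass of the complement of the $\varepsilon$-neighborhood of $\mathcal{S}$ is bounded, up to an exponentially small error, by $2s^k\int_{0}^{1}\int_{r_1}^{3}\Heat_Y(x,x,s^2)\,d\meas$ for some $r_1=r_1(\varepsilon)>0$. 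The second bullet of Theorem \ref{int-heat-Y} (for $\alpha>1/2$, $k=2\alpha+1$) shows this vanishes in the limit, so the Portmanteau theorem forces $\mu(X\setminus\mathcal{S})=0$.

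To identify $\mu$ on $\mathcal{S}$ I would compute $\mu_s$ on $v$-slabs. Combining \eqref{eq:Heat Integral}, Remark \ref{remarkweight}, and \eqref{eq:heatXapp} (symmetrized over the two copies of $\widetilde Y$ forming $X$), one obtains
\begin{equation*}
\lim_{s\to 0^+}\mu_s\bigl(\{(r,v): v_1<v<v_2\}\bigr)=C_{*}(v_2-v_1)
\end{equation*}
for an explicit $C_{*}>0$, uniformly in the chosen $\mathbb{S}^1$-factor of $X$. By Theorem \ref{dist}(4) the intrinsic distance on $\mathcal{S}$ satisfies $\dist((0,v_1),(0,v_2))=C|v_1-v_2|^{1/k}$, from which a direct computation identifies $\mathcal{H}^k$ restricted to $\mathcal{S}$ with a positive constant multiple $c_{\mathcal{H}}\,dv$ of Lebesgue measure in $v$. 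Since a slab boundary $\{v=v_i\}$ meets $\mathcal{S}$ in a single point and is therefore null for both $\mu$ and $\mathcal{H}^k$, the Portmanteau theorem gives $\mu(\{v_1<v<v_2\}\cap\mathcal{S})=(C_{*}/c_{\mathcal{H}})\,\mathcal{H}^k(\{v_1<v<v_2\}\cap\mathcal{S})$. The $v$-slabs generate the Borel $\sigma$-algebra of $\mathcal{S}$, so $\mu=c\cdot 1_{\mathcal{S}}\,d\mathcal{H}^k$ with $c:=C_{*}/c_{\mathcal{H}}>0$, independent of the extracted subsequence, proving \eqref{eq:weak limit}. The support claim is then immediate since $c>0$ and $\mathcal{H}^k$ assigns positive measure to every nonempty relative-open subset of $\mathcal{S}$.

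The main obstacle will be the rigorous passage from these slab-valued limits to weak convergence tested against arbitrary $\varphi\in C(X)$: the explicit formula \eqref{eq:Heat Integral} controls test functions that are $r$-independent and piecewise constant in $v$, while $\varphi$ is merely continuous on $X$. A uniform-in-$s$ upper bound for $\mu_s$ on small $v$-slabs (again extracted from \eqref{eq:Heat Integral}) provides the needed equicontinuity, and combined with the concentration on $\mathcal{S}$ established in the first paragraph, it reduces the problem to approximating $\varphi|_{\mathcal{S}}$ uniformly by $v$-piecewise constant functions, which is standard since $\mathcal{S}$ is topologically a finite disjoint union of circles.
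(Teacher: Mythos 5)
Your proposal is correct and follows essentially the same route as the paper's proof: extract weak subsequential limits using the convergent total mass from Theorem \ref{int-heat-X}, kill the mass away from $\mathcal{S}$ via Theorem \ref{int-heat-Y} and \eqref{eq:heatXapp}, identify the limit on $\mathcal{S}$ from the explicit slab/box asymptotics \eqref{eq:Heat Integral} together with the snowflake distance formula of Theorem \ref{dist}(4), and observe the resulting constant is subsequence-independent. The only cosmetic difference is that the paper pins down the limit by computing $\nu(B_r(z))=Cr^k$ on metric balls centered on $\mathcal{S}$, whereas you match $v$-slab masses against $\mathcal{H}^k|_{\mathcal{S}}=c_{\mathcal{H}}\,dv$; these are equivalent and rest on the same inputs.
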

\begin{proof} By Theorem \ref{int-heat-X}, 
	\[0 < \lim_{s \to 0^+} s^k	\int_X \Heat_X (x,x,s^2) d\meas <\infty.
	\]
Therefore for any sequence $s_i \to 0^+$, there is a subsequence such that the left hand side of \eqref{eq:weak limit}  converges to a Radon measure weakly. Take a sequence $s_i \to 0^+$ and a Radon measure $\nu$ on $X$ such that
	\begin{equation}
		(s_i)^k\Heat (x,x,(s_i)^2)d\meas(x) \to \nu,\quad \text{weakly as $i \to \infty$. }
	\end{equation}

For any $z = (r_0,v_0) \in \tilde Y$, denote $\mathrm{Box}_{(r_1, v_1)}(z) =\{ (r,v)| 0 \le r_0 < r <r_1 \le 3, \ 0 \le v_0 < v < v_1 \le 1\}.$
Then the weak convergence gives
\begin{equation}\label{eq:0001}
\liminf_{i \to \infty}\int_{\mathrm{Box}_{(r_1, v_1)}(z)}(s_i)^k\Heat (x,x,(s_i)^2)d\meas(x)\ge \nu (\mathrm{Box}_{(r_1, v_1)}(z))
\end{equation}
and 
\begin{equation}\label{eq:0002}
\limsup _{i \to \infty} \int_{\overline{\mathrm{Box}_{(r_1,v_1)}(z)}} (s_i)^k\Heat (x,x,(s_i)^2)d\meas(x)\le \nu(\overline{\mathrm{Box}_{(r_1, v_1)}(z)}). 
\end{equation}

On the other hand, by \eqref{eq:heatXapp} and \eqref{eq:Heat Integral}, we know
\begin{equation}\label{eq:0003}
\lim_{\epsilon \to 0^+}\left( \limsup _{s \to 0^+}\int_{\mathrm{Box}_{(r_1, v_1)}(z) \setminus \mathrm{Box}_{(r_1, v_1-\epsilon)}(z) }s^k\Heat (x,x,s^2)d\meas(x)\right)=0.
\end{equation}
Combining (\ref{eq:0001}), (\ref{eq:0002}) with (\ref{eq:0003}) shows
\begin{equation}\label{eq:box conv}
\lim_{i \to \infty}\int_{\mathrm{Box}_{(r_1, v_1)}(z)}(s_i)^k\Heat (x,x,(s_i)^2)d\meas(x)=\nu (\mathrm{Box}_{(r_1, v_1)}(z)),
\end{equation}
and
\begin{equation}
\nu (\overline{\mathrm{Box}_{(r_1, v_1)}(z)})=\nu (\mathrm{Box}_{(r_1,v_1)}(z)).
\end{equation}

Next let us prove that the support of $\nu$ coincides with $\mathcal{S}$.
Theorem \ref{int-heat-Y} and \eqref{eq:heatXapp} allow us to conclude
that for any compact subset $A$ in $\mathcal{R}$, 
\begin{equation}
\int_As^k\Heat (x,x,s^2)d\meas \to 0,\quad \text{as $s \to 0^+$.}
\end{equation}
This implies that the support of $\nu$ is included in $\mathcal{S}$. Theorem \ref{int-heat-Y} and \eqref{eq:heatXapp} also imply $\nu (\mathrm{Box}_{(r_1,v_1)}(z)) >0$ whenever $z \in \mathcal{S}$. Hence the support of $\nu$ coincides with $\mathcal{S}$.

Moreover  Theorem~\ref{dist} (4) with (\ref{eq:Heat Integral}) and (\ref{eq:box conv}) shows that there exist $C>0$ and $\delta>0$ such that $\nu (B_r(z))=Cr^k$ holds for all $z \in \mathcal{S}$ and $0<r<\delta$. Combining this with the observation above easily implies $\nu=c1_{\mathcal{S}}d\mathcal{H}^k$ for some $c>0$. Finally since we have
\[
c=\mathcal{H}^k(\mathcal{S})^{-1} \cdot \lim_{s \to 0^+} s^k	\int_X \Heat_X (x,x,s^2) d\meas\]
which does not depend on the sequence $s_i \to 0^+$, we conclude because $s_i$ is arbitrary.

\end{proof}

We are now in a position to introduce a main result of this subsection. Let us emphasize that $k$ is not necessarily an integer.
\begin{cor}\label{cor:weyl sing}
Let $\alpha>\frac{1}{2}$ and $k=2\alpha+1$. Then  
\begin{equation}
\lim_{\lambda \to \infty}\frac{N(\lambda)}{\lambda^{k/2}}=\frac{c}{\Gamma (k+1)}\mathcal{H}^k(\mathcal{S}) \in (0, \infty),
\end{equation}
where $c$ is given in (\ref{eq:weak limit}).
\end{cor}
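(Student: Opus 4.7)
The plan is to apply the Karamata Tauberian theorem \eqref{taubTer} with exponent $k$ and slowly varying function $L\equiv 1$. By the spectral expansion \eqref{eq:heat exp}, the heat trace satisfies $\sum_i e^{-\lambda_i t}=\int_X \Heat(x,x,t)\,d\meas(x)$, so everything reduces to computing the small-time behaviour of this integral.

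First I would invoke Theorem \ref{thm:weyl}, which gives the weak convergence of measures
$$s^k \Heat(x,x,s^2)\,d\meas(x)\longrightarrow c\, 1_{\mathcal{S}}\,d\mathcal{H}^k\quad\text{as $s\to 0^+$.}$$
Since $X$ is compact, the constant function $1$ is a valid test function, so
$$\lim_{s\to 0^+} s^k \int_X \Heat(x,x,s^2)\,d\meas(x) = c\,\mathcal{H}^k(\mathcal{S}),$$
and the finiteness $\mathcal{H}^k(\mathcal{S})<\infty$ follows because Theorem \ref{int-heat-X} already produced this limit as a positive finite number.

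Setting $t=s^2$, so that $\sqrt{t}^{\,k}=s^k$, one obtains
$$\lim_{t\to 0^+} t^{k/2}\sum_i e^{-\lambda_i t}=c\,\mathcal{H}^k(\mathcal{S}).$$
Applying the equivalence \eqref{taubTer} with $\alpha$ there replaced by $k$ and $L\equiv 1$ then yields
$$\lim_{\lambda\to\infty}\frac{N(\lambda)}{\lambda^{k/2}}=\frac{c\,\mathcal{H}^k(\mathcal{S})}{\Gamma(k+1)}\in(0,\infty),$$
which is exactly the claimed identity. I do not anticipate any genuine obstacle: all the heavy lifting has already been done in Theorems \ref{int-heat-X} and \ref{thm:weyl}. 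The only point that deserves a remark is the passage from the weak convergence of Theorem \ref{thm:weyl} to the integrated statement, which is immediate from compactness of $X$ and continuity of the constant test function.
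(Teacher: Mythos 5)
Your proof is correct and follows exactly the route the paper takes: the paper's proof of Corollary \ref{cor:weyl sing} is precisely ``Theorem \ref{thm:weyl} plus Karamata \eqref{taubTer} with $L=1$,'' and your write-up just fills in the (correct) details of testing the weak convergence against the constant function on the compact space $X$ and identifying the heat trace with $\sum_i e^{-\lambda_i t}$ via \eqref{eq:heat exp}.
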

\begin{proof}
This is a direct consequence of Theorem \ref{thm:weyl} and \eqref{taubTer} with $L=1$
\end{proof}
The following is another surprising result which is an immediate consequence of Theorem \ref{int-heat-X} with the same techniques as in the proof above. It 
 in particular shows that for general  compact $\RCD(K, N)$ spaces, the eigenvalue counting function $N(\lambda)$ defined in \eqref{N-lambda} does not satisfy the following asymptotic formula:
\begin{equation*}
N(\lambda) \sim \lambda^{\beta},\quad \text{for some $\beta \ge 0$ as $\lambda \to \infty$.}
\end{equation*}
In connection with (\ref{eq:009a}) below, we recall that any neighborhood of any singular point has infinite $2$-dimensional Hausdorff measure; see Remark \ref{rem:infinite}.
\begin{thm}\label{2weyl}
Let $\alpha=\frac{1}{2}$. Then 
\begin{equation}\label{eq:009a}
\frac{s^2}{-\log s}\Heat (x,x, s^2)d\meas (x) \to c \cdot 1_{\mathcal{S}}\,d\mathcal{H}^2,\quad \text{weakly as $s \to 0^+$}
\end{equation}
for some constant $c>0$.
In particular
	\begin{equation} \label{eq:000001}
	\lim_{\lambda \to \infty}\frac{N(\lambda)}{\lambda \log \lambda} = \frac{c}{\Gamma (3)}\mathcal{H}^2(\mathcal{S})=\frac{1}{4 \pi}. 
	\end{equation}

\end{thm}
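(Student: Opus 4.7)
The plan is to first establish the weak convergence (\ref{eq:009a}), then deduce the Weyl asymptotic (\ref{eq:000001}) from it via the Karamata Tauberian theorem (\ref{taubTer}). The argument mirrors that of Theorem~\ref{thm:weyl}, with the crucial adjustment that the normalization is $s^2/(-\log s)$ rather than $s^k$, reflecting the logarithmic correction recorded in Theorem~\ref{int-heat-X} at the critical case $\alpha = 1/2$.

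To prove (\ref{eq:009a}), set $\mu_s := \frac{s^2}{-\log s}\,\Heat(x,x,s^2)\,d\meas$. Theorem~\ref{int-heat-X} gives $\mu_s(X) \to \frac{1}{2\pi}$, so $\{\mu_s\}$ is weakly precompact. Let $\nu$ be any subsequential weak limit. First, for any compact $A \subset \mathcal{R}$ bounded away from $\mathcal{S}$, Theorem~\ref{int-heat-Y:crit} together with the gluing identity (\ref{eq:heatXapp}) and the exponential error (\ref{eq:heat gluing}) show that $s^2 \int_A \Heat(x,x,s^2)\,d\meas$ converges to a finite limit; dividing by $-\log s$ gives $\mu_s(A) \to 0$, so $\mathrm{supp}\,\nu \subset \mathcal{S}$. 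Second, for a box $\mathrm{Box}_{(r_1,v_1)}(z)$ based at $z = (0, v_0) \in \mathcal{S}$ and of $v$-length $\ell = v_1 - v_0$, the logarithmic expansion in Theorem~\ref{int-heat-Y:crit} together with Remark~\ref{remarkweight} and (\ref{eq:heatXapp})--(\ref{eq:heat gluing}) yields
\[
\lim_{s \to 0^+} \mu_s\bigl(\mathrm{Box}_{(r_1,v_1)}(z)\bigr) \;=\; \frac{\ell}{4\pi},
\]
independently of $r_1 > 0$. Arguing as in (\ref{eq:0001})--(\ref{eq:box conv}), the boundaries of such boxes are $\nu$-null, so these box values determine $\nu$. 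Since Theorem~\ref{dist}(4) with $\alpha = 1/2$ gives $\dist((0,v_1),(0,v_2)) = C|v_1-v_2|^{1/2}$, the intrinsic $2$-Hausdorff measure on $\mathcal{S}$ is a constant multiple of Lebesgue measure in $v$, and comparison with the box values forces $\nu = c\,1_{\mathcal{S}}\,d\mathcal{H}^2$ for the unique $c > 0$ satisfying $c\,\mathcal{H}^2(\mathcal{S}) = \tfrac{1}{2\pi}$. As this limit is independent of the chosen subsequence, the full weak convergence (\ref{eq:009a}) follows.

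For (\ref{eq:000001}), integrating (\ref{eq:009a}) over $X$ and setting $t = s^2$ gives $\sum_i e^{-\lambda_i t} \sim \frac{-\log t}{4\pi\, t}$ as $t \to 0^+$. Since $L(t) = -\log t$ is slowly varying, the Karamata Tauberian theorem (\ref{taubTer}) applied with the exponent matching the $\lambda\log\lambda$ growth yields
\[
\lim_{\lambda \to \infty}\frac{N(\lambda)}{\lambda\log\lambda}\;=\;\frac{c\,\mathcal{H}^2(\mathcal{S})}{\Gamma(3)}\;=\;\frac{1}{4\pi}.
\]

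I expect the main technical obstacle to be the box-value computation together with the identification $\nu = c\,1_{\mathcal{S}}\,d\mathcal{H}^2$: one must uniformly control the error terms from Theorem~\ref{int-heat-Y:crit} across all base points $z \in \mathcal{S}$, and rigorously verify that the logarithmic mass of the heat trace concentrates on $\{r=0\}$ rather than on an approximating neighborhood of it. Once these are secured, the scaling structure from Theorem~\ref{dist}(3)--(4) rigidifies the limit into the stated $\mathcal{H}^2$-weighted form.
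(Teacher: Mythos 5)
Your proposal is correct and follows essentially the same route as the paper: the paper's proof simply says to repeat the argument of Theorem~\ref{thm:weyl} with Theorem~\ref{int-heat-Y:crit} in place of Theorem~\ref{int-heat-Y}, then apply \eqref{taubTer} with $L=\log$ and read off the constant from \eqref{heatX-1/2}, which is exactly what you do (with more detail supplied for the box-value computation and the identification $\nu = c\,1_{\mathcal{S}}\,d\mathcal{H}^2$ via the scaling in Theorem~\ref{dist}(3)--(4)). The total mass $c\,\mathcal{H}^2(\mathcal{S})=\tfrac{1}{2\pi}$, the support argument, and the final value $\tfrac{1}{4\pi}$ all match the paper's computation.
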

\begin{proof}

Applying a similar argument as in the proof of Theorem \ref{thm:weyl} with Theorem \ref{int-heat-Y:crit}  shows  (\ref{eq:009a}). 
Then the first equality in (\ref{eq:000001}) follows from \eqref{taubTer} with $L = \log$, and 
 the second equality comes from \eqref{heatX-1/2}. 

\end{proof}

\section{Asymptotics of eigenfunctions}
\subsection{Questions and negative answer}
Let us start by recalling a question raised by Ding in \cite[page 511]{Ding02}.
\begin{quest}[Ding]\label{q:ding}
For any integer $n \ge 2$, does there exist a positive small number $\epsilon_n>0$ such that 
\begin{equation}\label{eq:1112}
\int_{M^n \setminus A}f^2d\mathrm{vol}^g>\frac{1}{2}\int_{M^n}f^2d\mathrm{vol}^g
\end{equation}
holds for any $n$-dimensional closed Riemannian manifold $(M^n, g)$ with nonnegative Ricci curvature, any eigenfunction $f$ of $-\Delta^g$ on $M^n$ and any Borel subset $A \subset M^n$ with $\mathrm{vol}^gA <\epsilon_n\mathrm{vol}^gM^n$?
\end{quest}
It is easy to see from \cite[Theorem 1.1]{JZ} that the question has a negative answer even for the $n$-dimensional canonical unit sphere $\mathbb{S}^n$. Actually for any great circle $c$ in $\mathbb{S}^n$, we can find an $L^2$-orthonormal basis $\{f_j\}_j$ consisting of  $j$-th eigenfunctions $f_j$ of $\mathbb{S}^n$ and a subsequence $i(j)$ satisfying that 
\begin{equation}\label{eq:1111}
f_{i(j)}^2d\mathrm{vol}^g \to \nu_c, \quad \text{weakly},
\end{equation}
where $\nu_c$ denotes the canonical probability $1$-dimensional measure whose support coincides with $c$. Then it is easily checked that the sequence $f_{i(j)}$ provides a counterexample to Question \ref{q:ding}. Namely the question above is related to the quantum ergodicity (see also \cite{Z}). 

Since it is also well-known that it is really hard to determine all limit measures appeared as in (\ref{eq:1111}), let us switch the question above to a weaker one; 
\begin{quest}
How many eigenfunctions satisfy (\ref{eq:1112})? 
\end{quest}
This new question should be related to Weyl's law.

The main purpose of this section is to give positive answers to this new question for more general spaces, namely $\RCD(K, N)$ spaces. The obtained results later will be related to the validity of ``regular'' Weyl's law which is completely different from the results in the previous sections. Thus let us start the next subsection by introducing the regular Weyl's law for compact $\RCD(K, N)$ spaces.

\subsection{Regular Weyl's law and its consequence}\label{regw}

We start this subsection by giving the following elementary lemma.
\begin{lem}\label{lem:haus}
Let $(X, \dist, \meas)$ be an $\RCD(K, N)$ space for some $K \in \mathbb{R}$ and some $N \in [1, \infty)$. Then for any $l \ge 0$ and any Borel subset $A \subset X$, we have
\begin{equation}\label{eq:1114}
\mathcal{H}^l(A)\le C(N, l)\liminf_{s \to 0^+}\int_{B_{s}(A)}\frac{s^l}{\meas (B_s(x))}d\meas.
\end{equation}
\end{lem}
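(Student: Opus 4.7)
The plan is to prove the inequality by a standard Vitali-type covering argument, using the Bishop--Gromov volume doubling that holds on $\RCD(K,N)$ spaces in the small-scale limit.

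First, fix $s>0$ small and consider the cover $\{B_s(x)\}_{x\in A}$ of $A$. Since the reference measure $\meas$ on an $\RCD(K,N)$ space is locally doubling (by Bishop--Gromov), the $5r$-covering (Vitali) lemma produces a countable disjoint subfamily $\{B_s(x_i)\}_{i}$, with $x_i \in A$, such that $\{B_{5s}(x_i)\}_{i}$ still covers $A$. Using the diameters $\diam B_{5s}(x_i) \le 10s$, this immediately gives the Hausdorff pre-measure estimate
\begin{equation*}
\mathcal{H}^l_{10s}(A) \;\le\; \omega_l\,5^l\sum_i s^l,
\end{equation*}
where $\omega_l$ is the standard normalization constant for $\mathcal{H}^l$.

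Next I would bound $\sum_i s^l$ by the integral on the right-hand side of \eqref{eq:1114}. For each $y\in B_s(x_i)$ we have $B_s(y)\subset B_{2s}(x_i)$, so Bishop--Gromov gives
\begin{equation*}
\meas\bigl(B_s(y)\bigr)\;\le\;\meas\bigl(B_{2s}(x_i)\bigr)\;\le\;D(K,N,s)\,\meas\bigl(B_s(x_i)\bigr),
\end{equation*}
where the doubling factor satisfies $D(K,N,s)\to 2^N$ as $s\to 0^+$. Consequently
\begin{equation*}
\int_{B_s(x_i)}\frac{s^l}{\meas(B_s(y))}\,d\meas(y) \;\ge\; \frac{s^l}{D(K,N,s)}.
\end{equation*}
Summing over the disjoint balls and using $\bigcup_i B_s(x_i)\subset B_s(A)$, I obtain
\begin{equation*}
\sum_i s^l \;\le\; D(K,N,s)\int_{B_s(A)}\frac{s^l}{\meas(B_s(y))}\,d\meas(y).
\end{equation*}

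Combining the two inequalities and letting $s\to 0^+$ yields
\begin{equation*}
\mathcal{H}^l(A)\;=\;\lim_{s\to 0^+}\mathcal{H}^l_{10s}(A)\;\le\;\omega_l\,5^l\cdot 2^N\cdot\liminf_{s\to 0^+}\int_{B_s(A)}\frac{s^l}{\meas(B_s(y))}\,d\meas(y),
\end{equation*}
so one may take $C(N,l)=\omega_l\,5^l\,2^N$, which depends only on $N$ and $l$ (the dependence on $K$ disappears precisely because the Bishop--Gromov ratio degenerates to the Euclidean value in the small-scale limit). There is no essential obstacle here; the only point requiring care is tracking that the doubling constant in the Bishop--Gromov inequality converges to $2^N$ as $s \to 0^+$ uniformly in the base point, so that the final constant is independent of $K$.
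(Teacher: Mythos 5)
Your proof is correct and follows essentially the same route as the paper's: a covering of $A$ at scale $s$ (the paper uses a maximal $s$-separated net where you use the Vitali $5r$-lemma, an immaterial difference), combined with the Bishop--Gromov comparison $\meas(B_s(y))\le C\,\meas(B_s(x_i))$ for $y\in B_s(x_i)$ to convert $\sum_i s^l$ into the integral over $B_s(A)$ before letting $s\to 0^+$.
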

\begin{proof}
This is essentially the same as \cite[Lemma 7.11]{Ding02}.
Let us take a maximal $s$-separated set $\{x_i\}_i$ of $A$. Then
\begin{equation}\label{eq:1113}
\sum_is^l \le C(N)\sum_i\int_{B_{s}(x_i)}\frac{s^l}{\meas (B_{s}(x))}d \meas (x) \le C(N) \int_{B_{s}(A)} \frac{s^l}{\meas (B_{s}(x))}d \meas (x), 
\end{equation}
where in the first inequality in (\ref{eq:1113}), we used
\begin{equation}\label{eq:001}
\meas (B_{s}(x_i)) \ge c(N) \meas(B_{2s}(x_i)) \ge c(N) \meas (B_{s}(x))
\end{equation}
on $B_{s}(x_i)$. Note that the first inequality in (\ref{eq:001}) is a direct consequence of Bishop-Gromov inequality (for small $s>0$) and that the second inequality follows from the inclusion $B_s(x) \subset B_{2s}(x_i)$. 
Then
letting $s \to 0^+$ in (\ref{eq:1113}) implies (\ref{eq:1114}).
\end{proof}
\begin{rem}
The dependence on $l$ in the constant in the lemma above comes from the definition of the $l$-dimensional Hausdorff measure;
\begin{equation}
\mathcal{H}^l(A)=\lim_{\delta \to 0^+}\mathcal{H}^l_{\delta}(A),
\end{equation}
where 
\begin{equation}
\mathcal{H}^l_{\delta}(A)=\inf \left\{ \sum_ic_lr_i^l \Big| A \subset \bigcup_iB_{r_i}(x_i),\,\,r_i<\delta \right\}
\end{equation}
for some constant $c_l>0$. When $l$ is an integer, we always choose $c_l$ as $\omega_l=\mathcal{H}^l(B_1(0_l))$.
\end{rem}

Let us introduce the following result, where $\mathcal{R}_n^*$ denotes the reduced $n$-regular set of $(X, \dist, \meas)$ defined by the set of all $n$-dimensional regular points $x \in X$ with the existence of a finite positive limit:
\begin{equation}
\lim_{r\to 0^+}\frac{\meas(B_r(x))}{r^n} \in (0, \infty).
\end{equation}
Note  that $\meas (X \setminus \mathcal{R}_n^*)=0$ and that $\meas$ and $\mathcal{H}^n$ are absolutely continuous to each other on $\mathcal{R}_n^*$ (see \cite[Theorem 4.1]{AmbrosioHondaTewodrose}). 
\begin{thm}[Regular Weyl's law]  \label{R-weyl}
Let $(X, \dist, \meas)$ be a compact $\RCD(K, N)$ space for some $K \in \mathbb{R}$ and some $N \in [1, \infty)$ and let $n$ be the rectifiable dimension.
Then 
\begin{equation}\label{eq:110}
\lim_{r \to 0^+}\int_X\frac{r^n}{\meas (B_r(x))}d \meas=\int_X\lim_{r\to 0^+}\frac{r^n}{\meas (B_r(x))}d\meas <\infty
\end{equation}
holds if and only if Weyl's law is satisfied in a regular sense, namely
\begin{equation}\label{eq:weyl}
\lim_{\lambda \to \infty}\frac{N(\lambda)}{\lambda^{n/2}}=\frac{\omega_n}{(2\pi)^n}\mathcal{H}^n(\mathcal{R}_n^*)<\infty.
\end{equation}
Moreover if (\ref{eq:110}) is valid, then $\mathcal{H}^n(X)<\infty$, in particular the Hausdorff dimension of $X$ is equal to $n$.
\end{thm}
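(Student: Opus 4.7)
The plan is to use Karamata's Tauberian theorem (\ref{taubTer}) to recast the two statements as assertions about the short-time heat trace, and then to exploit the two-sided Gaussian bounds (\ref{eq:gauusian}) to pass between integrability properties of the heat kernel on the diagonal and integrability properties of $r^n/\meas(B_r(\cdot))$. Applying (\ref{taubTer}) with $\alpha=n$ and $L\equiv 1$, and using the identity $\Gamma(n/2+1)(4\pi)^{n/2}=(2\pi)^n/\omega_n$, the regular Weyl's law (\ref{eq:weyl}) is equivalent to
\[
\lim_{t\to 0^+} t^{n/2}\int_X \Heat(x,x,t)\, d\meas(x)=\frac{\mathcal{H}^n(\mathcal{R}_n^*)}{(4\pi)^{n/2}}.
\]

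The linchpin is a pointwise short-time asymptotic at regular points. For every $x\in\mathcal{R}_n^*$, the pmGH-rescalings $(X, r^{-1}\dist, \meas(B_r(x))^{-1}\meas, x)$ converge to $(\mathbb{R}^n, \dist_{\mathbb{R}^n}, \omega_n^{-1}\mathcal{H}^n, 0_n)$, so continuity of the heat kernel under such rescaling (\cite{AmbrosioHondaTewodrose}) together with the scaling identity (\ref{eq:heat scaling}) yields
\[
\lim_{t\to 0^+} t^{n/2}\Heat(x,x,t)=\frac{1}{(4\pi)^{n/2}\theta_n(x)},\qquad \theta_n(x):=\lim_{r\to 0^+}\frac{\meas(B_r(x))}{\omega_n r^n}.
\]
Combined with the absolute continuity $d\meas=\theta_n\, d\mathcal{H}^n$ on $\mathcal{R}_n^*$, this gives $\int_X\lim_{t\to 0^+} t^{n/2}\Heat(x,x,t)\, d\meas=\mathcal{H}^n(\mathcal{R}_n^*)/(4\pi)^{n/2}$, which matches the target value.

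For the forward direction, (\ref{eq:110}) together with the a.e.\ pointwise convergence $r^n/\meas(B_r(\cdot))\to (\omega_n\theta_n)^{-1}$ gives, by Scheff\'e's lemma, $L^1$-convergence and hence equi-integrability of $\{r^n/\meas(B_r(\cdot))\}_{r\in(0,1]}$. The upper Gaussian bound $t^{n/2}\Heat(x,x,t)\le C_1 e^{C_2 t}\, t^{n/2}/\meas(B_{\sqrt{t}}(x))$ transfers this equi-integrability to the heat-trace family, and Vitali's convergence theorem produces the heat-trace limit above, which by Karamata yields (\ref{eq:weyl}). The converse runs in reverse: assuming (\ref{eq:weyl}), Karamata gives the heat-trace limit, which coincides with $\int_X\lim_{t\to 0^+} t^{n/2}\Heat\, d\meas$; Scheff\'e upgrades this to $L^1$-convergence of $t^{n/2}\Heat(\cdot,\cdot,t)$, then the lower Gaussian bound $t^{n/2}/\meas(B_{\sqrt{t}}(x))\le C_1 e^{C_2 t}\, t^{n/2}\Heat(x,x,t)$ transfers equi-integrability to $\{r^n/\meas(B_r(\cdot))\}$, and Vitali delivers (\ref{eq:110}). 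Finally, the moreover clause follows from Lemma~\ref{lem:haus} with $l=n$ and $A=X$: the right-hand side of (\ref{eq:1114}) is bounded by the finite limit in (\ref{eq:110}), so $\mathcal{H}^n(X)<\infty$; combined with $\mathcal{H}^n(\mathcal{R}_n^*)>0$ (which follows from the Weyl constant being positive), one concludes $\dim_{\mathcal{H}} X=n$.

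The main obstacle will be the precise pointwise short-time asymptotic of $\Heat(x,x,t)$ at $\meas$-a.e.\ point: since rescaling changes both the distance and the reference measure, hence the Laplacian, pinning down this limit requires invoking stability of the $\RCD$ condition and of the heat flow along pmGH-convergence in a pointwise-in-$x$ manner. Once this asymptotic is in hand, the Gaussian comparison and Vitali--Scheff\'e mechanism reduces the theorem to bookkeeping of constants.
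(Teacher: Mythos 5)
Your argument is correct, and it is in substance the proof of the result the paper relies on --- but the paper itself does not reprove it: its proof of Theorem~\ref{R-weyl} is two sentences, citing \cite[Theorem 4.3]{AmbrosioHondaTewodrose} for the equivalence of (\ref{eq:110}) and (\ref{eq:weyl}) and invoking Lemma~\ref{lem:haus} for the ``moreover'' clause. What you have written out (Karamata with $L\equiv 1$; the pointwise asymptotic $t^{n/2}\Heat(x,x,t)\to (4\pi)^{-n/2}\theta_n(x)^{-1}$ on $\mathcal{R}_n^*$ via pmGH-stability of heat kernels; the identity $d\meas=\theta_n\,d\mathcal{H}^n$ on $\mathcal{R}_n^*$; and the two-sided Gaussian bounds feeding a Scheff\'e--Vitali equi-integrability transfer in each direction) is precisely the mechanism of the cited theorem, and you correctly isolate the one genuinely hard input, namely the pointwise-in-$x$ heat-kernel asymptotic at reduced regular points, which does require the mGH-stability machinery of \cite{AmbrosioHondaTewodrose}. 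Your treatment of the ``moreover'' clause coincides with the paper's: Lemma~\ref{lem:haus} with $l=n$, $A=X$ gives $\mathcal{H}^n(X)<\infty$, hence $\dim_H X\le n$; note only that $\mathcal{H}^n(\mathcal{R}_n^*)>0$ is automatic from the mutual absolute continuity of $\meas$ and $\mathcal{H}^n$ on $\mathcal{R}_n^*$ together with $\meas(\mathcal{R}_n^*)=\meas(X)>0$, rather than something deduced from positivity of the Weyl constant. One bookkeeping caveat: the paper's statement (\ref{taubTer}) carries the factor $\Gamma(\alpha+1)$, so taken literally with $\alpha=n$ it would produce $\Gamma(n+1)$; your identity $\Gamma(n/2+1)(4\pi)^{n/2}=(2\pi)^n/\omega_n$ uses the standard Karamata normalization $\Gamma(\rho+1)$ with $\rho=n/2$, which is the correct one for matching the constant in (\ref{eq:weyl}), so just be aware that you are using the standard Tauberian theorem rather than the formula exactly as printed in (\ref{taubTer}).
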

\begin{proof}
The desired equivalence is already obtained in \cite[Theorem 4.3]{AmbrosioHondaTewodrose}. The remaining one comes from Lemma \ref{lem:haus}
\end{proof}

In the sequel, we always fix an $L^2$-orthonormal basis $\{f_j\}_j$ consisting of $j$-th eigenfunctions $f_j$ of $L^2(X, \meas)$.
The following is a main result of this subsection, which also gives a Weyl type asymptotics.
\begin{thm}\label{prop:ding general}
Let $(X, \dist, \meas)$ be a compact $\RCD(K, N)$ space for some $K \in \mathbb{R}$ and some finite $N \in [1, \infty)$ whose rectifiable dimension is equal to $n$.
If (\ref{eq:110}) is satisfied, 
then for all $0<\delta<\epsilon<1$ and any Borel subset $A \subset X$ with
$\mathcal{H}^n(A\cap \mathcal{R}_n^*)\le \delta \mathcal{H}^n (\mathcal{R}_n^*)$, letting
\begin{equation}
m_{A, \epsilon}(\lambda) =\sharp \left\{j\in \mathbb{N} \,\Big| \,\int_{X \setminus A}f_j^2d\meas \ge (1-\epsilon)\int_Xf_j^2d\meas\,\,\text{and}\,\,\lambda_j \le \lambda \right\}\text{  for any $\lambda>0$},
\end{equation}
we have
\begin{equation}
m_{A, \epsilon}(\lambda)\sim \lambda^{n/2},\quad \text{as $\lambda \to \infty$.}
\end{equation}
In particular 
\begin{equation}
\int_{X \setminus A}f_j^2d\meas \ge (1-\epsilon)\int_Xf_j^2d\meas
\end{equation}
is satisfied for infinitely many eigenfunctions $f_j$ of $-\Delta$ on $X$.
\end{thm}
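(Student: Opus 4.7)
The plan is to prove a local version of Weyl's law on $A$ and then use a one-line Chebyshev-type counting argument. First I would introduce the local spectral counting function
\begin{equation*}
N_A(\lambda) := \sum_{\lambda_j \le \lambda} \int_A f_j^2 \, d\meas
\end{equation*}
and show that
\begin{equation*}
\lim_{\lambda \to \infty} \frac{N_A(\lambda)}{\lambda^{n/2}} = \frac{\omega_n}{(2\pi)^n} \mathcal{H}^n(A \cap \mathcal{R}_n^*).
\end{equation*}
Integrating the spectral expansion (\ref{eq:heat exp}) against $\mathbf{1}_A$ gives the local heat-trace identity
$\int_A \Heat(x,x,t)\, d\meas(x) = \sum_j e^{-\lambda_j t} \int_A f_j^2 d\meas$, so by Karamata's Tauberian theorem (\ref{taubTer}) with $L \equiv 1$ the local Weyl asymptotic is equivalent to the short-time behavior of the left-hand side. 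At every $x \in \mathcal{R}_n^*$ the heat kernel satisfies the density-weighted Euclidean limit
\begin{equation*}
\lim_{t \to 0^+} t^{n/2} \Heat(x,x,t) = \frac{1}{(4\pi)^{n/2}} \cdot \lim_{r \to 0^+} \frac{\omega_n r^n}{\meas(B_r(x))},
\end{equation*}
which is the core ingredient in the proof of Theorem \ref{R-weyl} in \cite{AmbrosioHondaTewodrose}. The integrability hypothesis (\ref{eq:110}), together with the Li--Yau bound (\ref{eq:gauusian}) which dominates $t^{n/2} \Heat(x,x,t)$ by $C \cdot t^{n/2}/\meas(B_{\sqrt{t}}(x))$, supplies the $L^1(\meas)$ dominating function needed to pass the pointwise limit under the integral over $A$, yielding the displayed local Weyl formula.

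Next I would apply a one-line averaging argument. Since $\int_X f_j^2 d\meas = 1$, the condition in the definition of $m_{A,\epsilon}(\lambda)$ is equivalent to $\int_A f_j^2 d\meas \le \epsilon$. Setting $J_\epsilon(\lambda) := \{j : \lambda_j \le \lambda,\, \int_A f_j^2 d\meas > \epsilon\}$, Markov's inequality gives
\begin{equation*}
\epsilon \cdot \sharp J_\epsilon(\lambda) \le \sum_{j \in J_\epsilon(\lambda)} \int_A f_j^2 d\meas \le N_A(\lambda),
\end{equation*}
so that $m_{A,\epsilon}(\lambda) = N(\lambda) - \sharp J_\epsilon(\lambda) \ge N(\lambda) - \epsilon^{-1} N_A(\lambda)$. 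Combining the regular Weyl law (\ref{eq:weyl}) of Theorem \ref{R-weyl} with the local version above and the hypothesis $\mathcal{H}^n(A \cap \mathcal{R}_n^*) \le \delta \mathcal{H}^n(\mathcal{R}_n^*)$ yields
\begin{equation*}
\liminf_{\lambda \to \infty} \frac{m_{A,\epsilon}(\lambda)}{\lambda^{n/2}} \ge \left(1 - \frac{\delta}{\epsilon}\right) \frac{\omega_n}{(2\pi)^n} \mathcal{H}^n(\mathcal{R}_n^*) > 0,
\end{equation*}
while the trivial bound $m_{A,\epsilon}(\lambda) \le N(\lambda) \sim \frac{\omega_n}{(2\pi)^n} \mathcal{H}^n(\mathcal{R}_n^*) \lambda^{n/2}$ furnishes the matching upper $\lambda^{n/2}$ order, so $m_{A,\epsilon}(\lambda) \sim \lambda^{n/2}$ in the sense of the statement.

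The main obstacle is the first step: one needs both the pointwise short-time heat-kernel density limit at reduced regular points and the ability to integrate it term-by-term against $\mathbf{1}_A$ for an arbitrary Borel $A$. The former follows from the $\RCD$ tangent-space theory at $n$-regular points, and the latter is precisely where the integrability assumption (\ref{eq:110}) of Theorem \ref{R-weyl} enters, through dominated convergence against the Li--Yau upper bound. Without (\ref{eq:110}) the local Weyl asymptotic fails to be of regular order, as shown by the singular examples in the earlier sections, and the present Chebyshev argument would give no useful bound.
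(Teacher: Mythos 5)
Your proposal is correct and follows essentially the same route as the paper: both arguments hinge on the $L^1$-convergence of $s^n\Heat(x,x,s^2)$ to $(4\pi)^{-n/2}1_{\mathcal{R}_n^*}$ from the proof of \cite[Theorem 4.3]{AmbrosioHondaTewodrose} together with a Chebyshev-type counting step, and your Markov inequality on $\int_A f_j^2\,d\meas$ is exactly the paper's decomposition of the heat trace over the set $\mathcal{M}$ of ``good'' indices and its complement. The only cosmetic difference is that you apply Karamata to the localized counting function $N_A(\lambda)$ before counting, whereas the paper performs the counting at the level of heat-trace partial sums and invokes a one-sided Tauberian comparison (\cite[Proposition 5.6]{AmbrosioHondaTewodrose}) at the very end.
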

\begin{proof}
Since $m_{A, \epsilon}(\lambda) \le N(\lambda)$, thanks to the validity of (\ref{eq:weyl}) with \cite[Proposition 5.6]{AmbrosioHondaTewodrose}, it is enough to prove that
\begin{equation}
\liminf_{s\to 0^+}s^n\sum_{i \in \mathcal{M}}e^{-\lambda_is^2}>0,
\end{equation}
where 
\begin{equation}
\mathcal{M}=\left\{ j \in \mathbb{N} \,\Big| \,\int_{X \setminus A}f_j^2d\meas \ge (1-\epsilon)\int_Xf_j^2d\meas \right\}.
\end{equation}
Note
\begin{align}
&\int_Xs^n\Heat (x,x,s^2)d\meas\nonumber \\
&=s^n\sum_{j \in \mathcal{M}}e^{-\lambda_js^2}+s^n\sum_{j \not \in \mathcal{M}}e^{-\lambda_js^2} \nonumber \\
&\ge s^n\sum_{j \in \mathcal{M}}e^{-\lambda_js^2}+\frac{s^n}{1-\epsilon}\sum_{j \not \in \mathcal{M}}e^{-\lambda_js^2}\int_{X\setminus A}f_j^2d\meas \nonumber \\
&\ge s^n\sum_{j \in \mathcal{M}}e^{-\lambda_js^2} +\frac{1}{1-\epsilon}\int_{X \setminus A}s^n\Heat (x,x,s^2)d\meas- \frac{s^n}{1-\epsilon}\sum_{j \in \mathcal{M}}e^{-\lambda_js^2}, 
\end{align}
namely
\begin{equation}
s^n\sum_{j \in \mathcal{M}}e^{-\lambda_js^2} \ge \frac{1}{\epsilon}\int_{X \setminus A}s^n\Heat (x,x,s^2)d\meas-\frac{1-\epsilon}{\epsilon}\int_Xs^n\Heat (x,x,s^2)d\meas.
\end{equation}
Letting $s \to 0^+$ shows
\begin{align}
\liminf_{s\to 0^+}s^n\sum_{j \in \mathcal{M}}e^{-\lambda_js^2} &\ge \frac{1}{(4\pi)^{n/2}\epsilon}\mathcal{H}^n(\mathcal{R}_n^*\setminus A)-\frac{1-\epsilon}{(4\pi)^{n/2}\epsilon}\mathcal{H}^n(\mathcal{R}_n^*) \nonumber \\
&\ge \frac{1-\delta}{(4\pi)^{n/2}\epsilon}\mathcal{H}^n(\mathcal{R}_n^*)-\frac{1-\epsilon}{(4\pi)^{n/2}\epsilon}\mathcal{H}^n(\mathcal{R}_n^*) \nonumber \\
&= \frac{\epsilon-\delta}{(4\pi)^{n/2}\epsilon}\mathcal{H}^n(\mathcal{R}_n^*)>0
\end{align}
because it is known from the proof of \cite[Theorem 4.3]{AmbrosioHondaTewodrose} that (\ref{eq:110}) implies the $L^1$-strong convergence of $s^n\Heat (x,x,s^2)$ to $(4\pi)^{-n/2}1_{\mathcal{R}_n^*}$ as $s \to 0^+$. Thus we conclude.
\end{proof}
Note that the proposition above can be applied to a metric measure space $([0,1], \dist_{\mathbb{R}}, \sin^{N-1}tdt)$, see \cite[Example 4.5]{AmbrosioHondaTewodrose}.

\begin{rem}
In connection with the theorem above, it is worth mentioning that 
under the assumption that $\mathcal{H}^n(\mathcal{R}_n^*)<\infty$ (in particular it is satisfied if (\ref{eq:110}) holds), for any $\epsilon>0$ there exists $\delta>0$ such that if a Borel subset $A\subset X$ satisfies $\meas (A)\le \delta$, then $\mathcal{H}^n(A\cap \mathcal{R}_n^*) \le \epsilon$. The proof of this fact is as follows. 

Denoting
\begin{equation}
d\mathcal{H}^n=\phi d \meas,\quad \text{for some  $\phi: \mathcal{R}_n^* \to [0, \infty)$}
\end{equation}
on $\mathcal{R}_n^*$, since
\begin{equation}
\int_{\mathcal{R}_n^*}\phi d\meas=\int_{\mathcal{R}_n^*}d\mathcal{H}^n=\mathcal{H}^n(\mathcal{R}_n^*)<\infty,
\end{equation}
we have $\phi \in L^1(\mathcal{R}_n^*, \meas)$. In particular for any $\epsilon>0$ there exists $\delta>0$ such that if a Borel subset $B \subset \mathcal{R}_n^*$ satisfies $\meas(B)<\delta$, then
\begin{equation}
\int_B\phi d\meas<\epsilon.
\end{equation}
which proves the desired statement.
\end{rem}

\begin{rem}\label{remnoncollapsed}
Introduced in \cite[Definition 1.1]{DePhillippisGigli}, an $\RCD(K, N)$ space $(X, \dist, \meas)$ is said to be non-collapsed if $\meas=\mathcal{H}^N$, which is a synthetic counterpart of volume non-collapsed Ricci limit spaces. It is known from \cite{DePhillippisGigli, KM} that non-collapsed $\RCD(K, N)$ spaces have finer properties than that of general $\RCD(K, N)$ spaces. Note that any noncollapsed $\RCD(K, N)$ space has the rectifiable dimension $N$ and that  (\ref{eq:110}) is satisfied as $n=N$ by Bishop-Gromov inequality if it is compact.

In connection with this, we point out that the following three conditions for a compact $\RCD(K, N)$ space $(X, \dist, \meas)$ are equivalent.
\begin{enumerate}
\item We have 
\[\liminf_{\lambda \to \infty}\frac{N(\lambda)}{\lambda^{N/2}}>0.\]
\item $N$ is an integer with $\meas =c\mathcal{H}^N$ for some $c>0$.
\item $N$ is an integer with
\[\lim_{\lambda \to \infty}\frac{N(\lambda)}{\lambda^{N/2}}=\frac{\omega_N}{(2\pi)^{N/2}}\mathcal{H}^N(X) \in (0, \infty).\]
\end{enumerate}
The nontrivial part is the implication from (1) to (2). If (1) holds, then \cite[(5.5)]{AmbrosioHondaTewodrose} with (\ref{eq:gauusian}) implies 
\[0<\liminf_{s \to 0^+}s^N\int_Xp(x,x,s^2)d\meas \le C\liminf_{r \to 0^+}\int_X\frac{r^N}{\meas (B_r(x))}d\meas =C\int_X\lim_{r\to 0^+}\frac{r^N}{\meas (B_r(x))}d\meas,\]
where the final equality comes from Bishop-Gromov inequality and the dominated convergence theorem. In particular
\[\meas \left( \left\{ x \in X |\lim_{r\to 0^+}\frac{r^N}{\meas (B_r(x))}>0\right\}\right)>0.\]
Thus we can apply \cite[Theorem 1.3]{BGHZ} or \cite[Corollary 1.3]{Hon} to prove that (2) holds.

\end{rem}

Finally let us give an answer to a similar question for $X$ as in Theorem \ref{thm:weyl}, namely in the case of singular Weyl's law.
\begin{prop}\label{weyl eig2}
Let $(X, \dist, \meas)$ be as in Theorem \ref{thm:weyl}.
For all $0<\delta<\epsilon<1$ and any closed subset $A \subset X$ with $\mathcal{H}^k(\partial A \cap \mathcal{S})=0$ and 
$\mathcal{H}^k(A\cap \mathcal{S})\le \delta \mathcal{H}^k (\mathcal{S})$,
letting
\begin{equation}
m_{A, \epsilon}(\lambda) =\sharp \left\{j\in \mathbb{N} \,\Big| \,\int_{X \setminus A}f_j^2d\meas \ge (1-\epsilon)\int_Xf_j^2d\meas\,\,\text{and}\,\,\lambda_j \le \lambda \right\} \text{  for any $\lambda>0$},
\end{equation}
we have
\begin{equation}\label{eq:1105}
m_{A, \epsilon}(\lambda) \sim \lambda^{k/2},\quad \text{as $\lambda \to \infty$.}
\end{equation}
In particular 
\begin{equation}
\int_{X \setminus A}f_j^2d\meas \ge (1-\epsilon)\int_Xf_j^2d\meas
\end{equation}
is satisfied for infinitely many eigenfunctions $f_j$ of $-\Delta$ on $X$.
\end{prop}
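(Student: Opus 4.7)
The plan is to follow the blueprint of the proof of Theorem \ref{prop:ding general}, with the singular Weyl's law (Theorem \ref{thm:weyl} and Corollary \ref{cor:weyl sing}) taking the place of the regular one there. The upper bound $\limsup_{\lambda \to \infty} m_{A,\epsilon}(\lambda)/\lambda^{k/2} < \infty$ follows at once from $m_{A,\epsilon}(\lambda) \le N(\lambda)$ together with Corollary \ref{cor:weyl sing}. For the lower bound, I would set
\begin{equation*}
\mathcal{M} := \left\{ j \in \mathbb{N} \, : \, \int_{X \setminus A} f_j^2 \, d\meas \ge 1-\epsilon \right\},
\end{equation*}
so that $m_{A,\epsilon}(\lambda) = \sharp\{j \in \mathcal{M} : \lambda_j \le \lambda\}$, and then invoke a one-sided Karamata--Tauberian argument in the spirit of \eqref{taubTer} (with $L \equiv 1$), using the upper bound just established to reduce the goal to
\begin{equation*}
\liminf_{s \to 0^+} s^k \sum_{j \in \mathcal{M}} e^{-\lambda_j s^2} > 0.
\end{equation*}

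To produce this liminf I would use the spectral expansion \eqref{eq:heat exp} together with the defining property of $\mathcal{M}$: since $\int_A f_j^2 \, d\meas > \epsilon$ whenever $j \notin \mathcal{M}$,
\begin{equation*}
\int_A \Heat(x, x, s^2) \, d\meas = \sum_j e^{-\lambda_j s^2} \int_A f_j^2 \, d\meas \ge \epsilon \sum_{j \notin \mathcal{M}} e^{-\lambda_j s^2},
\end{equation*}
and subtracting this bound from the full heat trace $\int_X \Heat(x,x,s^2)\,d\meas=\sum_j e^{-\lambda_j s^2}$ yields
\begin{equation*}
\sum_{j \in \mathcal{M}} e^{-\lambda_j s^2} \ge \int_X \Heat(x, x, s^2) \, d\meas - \frac{1}{\epsilon} \int_A \Heat(x, x, s^2) \, d\meas.
\end{equation*}

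Multiplying by $s^k$ and sending $s \to 0^+$, Theorem \ref{thm:weyl} gives $s^k \int_X \Heat(x,x,s^2) \, d\meas \to c\,\mathcal{H}^k(\mathcal{S})$. The hypothesis $\mathcal{H}^k(\partial A \cap \mathcal{S}) = 0$ makes $A$ a continuity set of the limit Radon measure $c \cdot 1_\mathcal{S} \, d\mathcal{H}^k$, so the Portmanteau theorem upgrades the weak convergence of Theorem \ref{thm:weyl} to $s^k \int_A \Heat(x,x,s^2) \, d\meas \to c\, \mathcal{H}^k(A \cap \mathcal{S}) \le c \delta \, \mathcal{H}^k(\mathcal{S})$. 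Putting these together,
\begin{equation*}
\liminf_{s \to 0^+} s^k \sum_{j \in \mathcal{M}} e^{-\lambda_j s^2} \ge \frac{c(\epsilon - \delta)}{\epsilon}\, \mathcal{H}^k(\mathcal{S}) > 0,
\end{equation*}
as required.

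The main subtlety I expect is the Tauberian step itself: \eqref{taubTer} is formally an iff between genuine limits, whereas here only a positive liminf on the Laplace-transform side is available. The remedy is that the $O(\lambda^{k/2})$ upper bound on $m_{A,\epsilon}$ (inherited from the full Weyl's law) is already in hand, and one-sided versions of Karamata (of the type implicitly invoked in Theorem \ref{prop:ding general} via \cite[Proposition 5.6]{AmbrosioHondaTewodrose}) then transfer a positive liminf of the exponential sum into a positive liminf of the counting function. Once that Tauberian principle and the Portmanteau step are granted, the remainder of the argument is a direct structural parallel to the proof of Theorem \ref{prop:ding general}, with the Hausdorff measure of $\mathcal{S}$ replacing the $n$-dimensional Hausdorff measure of the reduced regular set.
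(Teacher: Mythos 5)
Your proposal is correct and follows essentially the same route as the paper: the paper's proof simply says to repeat the argument of Theorem \ref{prop:ding general} using the convergence $s^k\int_{X\setminus A}\Heat(x,x,s^2)\,d\meas \to c\,\mathcal{H}^k(\mathcal{S}\setminus A)$, which is exactly the Portmanteau/continuity-set step you carry out (and your inequality $\sum_{j\in\mathcal{M}}e^{-\lambda_j s^2}\ge \int_X \Heat\,d\meas-\tfrac{1}{\epsilon}\int_A \Heat\,d\meas$ is algebraically identical to the one in that proof). The one-sided Tauberian reduction you flag is handled the same way there, via the upper bound $m_{A,\epsilon}(\lambda)\le N(\lambda)$ and \cite[Proposition 5.6]{AmbrosioHondaTewodrose}.
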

\begin{proof}
The proof is similar to that of Theorem \ref{prop:ding general} via
\begin{equation}
\int_{X \setminus A}s^k\Heat(x,x,s^2)d\meas \to c \mathcal{H}^k(\mathcal{S} \setminus A).
\end{equation}
\end{proof}
\begin{rem}
It is a direct consequence of the proposition above  that for any $0<\epsilon<1$ and any compact subset $A \subset \mathcal{R}_2$, we have
\begin{equation}
m_{A, \epsilon}(\lambda) \sim \lambda^{k/2}
\end{equation}
which should be compared with \cite[Theorem 1.1]{JZ}  referred at the beginning of the subsection.
\end{rem}

We are also able to disucuss similar asymptotic behaviors of the eigenfunctions for $X$ as in Theorem \ref{2weyl}.
\begin{prop}
Let $(X, \dist, \meas)$ be as in Theorem \ref{2weyl}.
For all $0<\delta<\epsilon<1$ and any closed subset $A \subset X$ with $\mathcal{H}^2(\partial A \cap \mathcal{S})=0$ and 
$\mathcal{H}^2(A\cap \mathcal{S})\le \delta \mathcal{H}^2 (\mathcal{S})$,
letting
\begin{equation}
m_{A, \epsilon}(\lambda) =\sharp \left\{j\in \mathbb{N} \,\Big| \,\int_{X \setminus A}f_j^2d\meas \ge (1-\epsilon)\int_Xf_j^2d\meas\,\,\text{and}\,\,\lambda_j \le \lambda \right\} \text{  for any $\lambda>0$},
\end{equation}
we have
\begin{equation}\label{eq:11059}
m_{A, \epsilon}(\lambda) \sim \lambda \log \lambda ,\quad \text{as $\lambda \to \infty$.}
\end{equation}
In particular 
\begin{equation}
\int_{X \setminus A}f_j^2d\meas \ge (1-\epsilon)\int_Xf_j^2d\meas
\end{equation}
is satisfied for infinitely many eigenfunctions $f_j$ of $-\Delta$ on $X$.
\end{prop}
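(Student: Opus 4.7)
The plan is to mirror the proof of Theorem~\ref{prop:ding general} and Proposition~\ref{weyl eig2}, replacing the power weight $s^n$ (resp.~$s^k$) with the slowly varying weight $s^2/(-\log s)$ dictated by the log-corrected Weyl's law of Theorem~\ref{2weyl}. Since $m_{A,\epsilon}(\lambda)\le N(\lambda)$ and \eqref{eq:000001} already gives $N(\lambda)\sim \lambda\log\lambda/(4\pi)$, the upper bound $\limsup_\lambda m_{A,\epsilon}(\lambda)/(\lambda\log\lambda)<\infty$ is automatic. It remains to prove a matching positive lower bound, which via the Karamata Tauberian theorem \eqref{taubTer} (applied with $\alpha=2$ and the slowly varying $L(t)=-\log t$) reduces to verifying
\begin{equation*}
\liminf_{s \to 0^+}\frac{s^2}{-\log s}\sum_{j\in\mathcal{M}} e^{-\lambda_j s^2} > 0,\qquad \mathcal{M}:=\Bigl\{j\in \mathbb{N} : \int_{X\setminus A}f_j^2\,d\meas \ge 1-\epsilon\Bigr\}.
\end{equation*}

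Following the bookkeeping of Theorem~\ref{prop:ding general}, split $\sum_j e^{-\lambda_j s^2}$ into a sum over $\mathcal{M}$ and over its complement, use $\int_{X\setminus A}f_j^2\,d\meas/(1-\epsilon)\le 1$ for indices $j\notin\mathcal{M}$, and reassemble the right-hand side via $\int_X \psi\,\Heat(x,x,s^2)\,d\meas = \sum_j e^{-\lambda_j s^2}\int_X\psi f_j^2\,d\meas$ with $\psi=1_{X\setminus A}$. This produces the inequality
\begin{equation*}
\frac{s^2}{-\log s}\sum_{j\in\mathcal{M}} e^{-\lambda_j s^2}
\ge \frac{1}{\epsilon}\int_{X\setminus A}\frac{s^2}{-\log s}\Heat(x,x,s^2)\,d\meas
-\frac{1-\epsilon}{\epsilon}\int_X\frac{s^2}{-\log s}\Heat(x,x,s^2)\,d\meas.
\end{equation*}
By Theorem~\ref{2weyl}, $\tfrac{s^2}{-\log s}\Heat(x,x,s^2)\,d\meas \to c\cdot 1_{\mathcal{S}}\,d\mathcal{H}^2$ weakly; since $A$ is closed with $\mathcal{H}^2(\partial A\cap \mathcal{S})=0$ and the limit measure is supported on $\mathcal{S}$, the portmanteau theorem upgrades this to convergence of the integrals over $X\setminus A$ (limit $c\mathcal{H}^2(\mathcal{S}\setminus A)\ge c(1-\delta)\mathcal{H}^2(\mathcal{S})$) and over $X$ (limit $c\mathcal{H}^2(\mathcal{S})$). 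Substituting and passing to the liminf yields the required positivity $\ge c(\epsilon-\delta)\mathcal{H}^2(\mathcal{S})/\epsilon > 0$.

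The main technical point is the Tauberian conversion: \eqref{taubTer} is phrased as an equality of limits, whereas we only derive a liminf bound on the Laplace-transform side. I expect this to be resolved by the one-sided Karamata--Feller Tauberian theorem, which applies to any nondecreasing right-continuous function (here $m_{A,\epsilon}$) and converts a liminf bound on $(s^2/(-\log s))\sum_{j\in\mathcal{M}} e^{-\lambda_j s^2}$ into a corresponding liminf bound on $m_{A,\epsilon}(\lambda)/(\lambda\log\lambda)$, at the cost of the factor $\Gamma(3)=2$; alternatively one may extract subsequences along which $m_{A,\epsilon}(\lambda)/(\lambda\log\lambda)$ realizes its liminf and run the same argument as in \cite[Proposition~5.6]{AmbrosioHondaTewodrose} cited in the proof of Theorem~\ref{prop:ding general}. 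Combined with the upper bound furnished by the total trace, this pins the asymptotic order of $m_{A,\epsilon}(\lambda)$ to exactly $\lambda\log\lambda$, which is the content of \eqref{eq:11059}.
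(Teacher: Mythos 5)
Your proposal is correct and follows essentially the same route as the paper: the paper's proof simply says to repeat the argument of Proposition \ref{weyl eig2} (hence of Theorem \ref{prop:ding general}) with the weight $t/(-\log t)$ and the ``$\lambda\log\lambda$'' versions of \cite[Propositions 5.5 and 5.6]{AmbrosioHondaTewodrose}, which is exactly your decomposition over $\mathcal{M}$, the portmanteau step using $\mathcal{H}^2(\partial A\cap\mathcal{S})=0$, and the one-sided Tauberian conversion. The technical point you flag at the end is precisely the one the paper addresses by invoking the log-modified \cite[Proposition 5.6]{AmbrosioHondaTewodrose}.
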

\begin{proof}The proof is essentially same to that of Proposition \ref{weyl eig2}. The only difference is to find the ``$t/(-\log t), \lambda \log \lambda$'' version of \cite[Propositions 5.5 and 5.6]{AmbrosioHondaTewodrose} instead of ``$t^{\gamma}, \lambda^{\gamma}$'', respectively. Since the proofs of \cite[Propositions 5.5 and 5.6]{AmbrosioHondaTewodrose} still work in this setting, we omit it.
\end{proof}

\appendix

\section{Proofs of Proposition \ref{prop:heat-cover} and Lemma \ref{Duhamelpri}}\label{ap}
\subsection{Fundamental domain}
let $(X, \dist, \meas)$ be an $\RCD(K, N)$ space for some $K \in \mathbb{R}$ and some $N \in [1, \infty)$, and let $\pi:\widetilde X\to X$ be a connected covering space with the lifted metric $\tilde \dist$ and measure $\tilde \meas$ and the deck transformation $\Gamma$. Then $(\widetilde X, \tilde \dist, \tilde \meas)$ be an $\RCD(K, N)$ space because of the same reason as in \cite[Lemma 2.8]{MW}. Let us begin with recalling the following standard notion;
\begin{defn}[Fundamental domain]\label{def_F_domain}
An open connected subset $D$ of $\widetilde X$ is said to be a \textit{fundamental domain} (of the covering $\pi: \widetilde X \to X$) if\\
(1) $D \cap \gamma D = \emptyset$ holds for any $\gamma \in \Gamma$ with $\gamma \neq 1$, and\\
(2) $\widetilde X = \bigcup_{\gamma \in \Gamma}\gamma \overline{D}$.
\end{defn}

To obtain a fundamental domain, we follow the standard construction of Dirichlet domain. We fix a point $\tilde x\in \widetilde X$. For every $\gamma\not= 1$ in $\Gamma$, we put
$$H_\gamma(\tilde{x})=\{y\in \widetilde{X} | \tilde \dist (y,\tilde{x})<\tilde \dist (y,\gamma x) \}.$$

\begin{prop}[Dirichlet domain]\label{D_domain}
For any $\tilde x \in \tilde X$, we define 
\begin{equation}
D(\tilde x):=\bigcap_{1 \neq \gamma \in \Gamma}H_\gamma(\tilde x),
\end{equation}
and call it the Dirichlet domain centered at $\tilde x$. Then $D(\tilde x)$ is a fundamental domain.
\end{prop}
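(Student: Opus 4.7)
The plan is to verify the defining conditions of Definition \ref{def_F_domain}, namely openness and connectedness of $D(\tilde x)$, pairwise disjointness of the translates, and the covering property. The foundational fact used throughout is that $(\widetilde X,\tilde\dist,\tilde\meas)$ is itself an $\mathrm{RCD}(K,N)$ space with $N<\infty$, so closed balls are compact (by Bishop--Gromov and Hopf--Rinow), the space is geodesic, and the covering projection $\pi$ is a local homeomorphism. In particular the orbit $\Gamma\tilde x=\pi^{-1}(\pi(\tilde x))$ is closed and discrete in $\widetilde X$, and $\{\gamma\in\Gamma:\tilde\dist(y,\gamma\tilde x)\le R\}$ is finite for every $y$ and every $R$.

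For openness, given $y\in D(\tilde x)$, set $r:=\tilde\dist(y,\tilde x)$; only finitely many $\gamma\ne 1$ satisfy $\tilde\dist(y,\gamma\tilde x)\le 3r$, and for every other $\gamma$ the triangle inequality forces $\tilde\dist(z,\gamma\tilde x)>\tilde\dist(z,\tilde x)$ on $B_{r/2}(y)$. Hence near $y$ the defining intersection reduces to a finite intersection of open sets. For connectedness I would show star-shapedness of $D(\tilde x)$ at $\tilde x$: along any minimizing geodesic $\sigma:[0,L]\to\widetilde X$ from $\tilde x$ to $y\in D(\tilde x)$,
\[
\tilde\dist(\sigma(s),\gamma\tilde x)\ge \tilde\dist(y,\gamma\tilde x)-(L-s)>\tilde\dist(y,\tilde x)-(L-s)=s=\tilde\dist(\sigma(s),\tilde x),
\]
so $\sigma(s)\in D(\tilde x)$ for every $s\in[0,L]$. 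For disjointness, if $y\in D(\tilde x)\cap\gamma D(\tilde x)$ with $\gamma\ne 1$, the first membership yields $\tilde\dist(y,\tilde x)<\tilde\dist(y,\gamma\tilde x)$, while $\gamma^{-1}y\in D(\tilde x)$ tested against $\gamma^{-1}\ne 1$, together with the isometric $\Gamma$-action, yields $\tilde\dist(y,\gamma\tilde x)<\tilde\dist(y,\tilde x)$, a contradiction.

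The covering property $\widetilde X=\bigcup_{\gamma}\gamma\overline{D(\tilde x)}$ is the main obstacle. For $y\in\widetilde X$, pick $\gamma_0$ minimizing $\tilde\dist(y,\gamma\tilde x)$ and set $w:=\gamma_0^{-1}y$; by $\Gamma$-invariance $w$ satisfies the weak inequalities $\tilde\dist(w,\tilde x)\le\tilde\dist(w,\gamma\tilde x)$ for all $\gamma\ne 1$. I would then show $w\in\overline{D(\tilde x)}$ by choosing a minimizing geodesic $\sigma:[0,L]\to\widetilde X$ from $\tilde x$ to $w$ and noting that each interior $\sigma(s)$ satisfies the same weak inequalities by triangle inequality. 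The technical crux is to exclude persistent equality along an entire subinterval of $(0,L)$: such persistence for a fixed $\gamma\ne 1$ would imply that $\sigma$ lies on the bisector of $\tilde x$ and $\gamma\tilde x$ throughout the subinterval, producing from a common interior point two length-minimizers whose initial segments coincide but whose endpoints $\tilde x$ and $\gamma\tilde x$ are distinct. This branching configuration is ruled out by essential non-branching of $\mathrm{RCD}(K,N)$ spaces combined with the strict monotonicity $s\mapsto\tilde\dist(\sigma(s),\tilde x)=s$ along $\sigma$. Since only finitely many $\gamma$ can produce equalities anywhere along $\sigma$, the set of $s\in(0,L)$ with $\sigma(s)\in D(\tilde x)$ is therefore dense, providing a sequence in $D(\tilde x)$ converging to $w$.
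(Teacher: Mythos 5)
Your proof follows essentially the same route as the paper's: openness via local finiteness of the family $\{\widetilde X\setminus H_\gamma(\tilde x)\}_{\gamma\neq 1}$, connectedness via star-shapedness along minimizing geodesics issuing from $\tilde x$, disjointness of translates via the distance comparison under the isometric action, and the covering property via nearest-orbit representatives together with a branching argument placing such points in $\overline{D(\tilde x)}$. Two of your steps are in fact slightly cleaner than the paper's: your connectedness argument uses only the triangle inequality (the paper routes it through Lemma \ref{D_domain_lem}), and your disjointness argument is direct rather than passing through the auxiliary set $F$ of orbit representatives.

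The one point that needs correcting is the appeal to ``essential non-branching'' in the covering step. Essential non-branching is a measure-theoretic statement about optimal geodesic plans; it does not by itself exclude a single pair of branching minimizers, which is exactly the configuration you construct (two minimizers from a common point agreeing on an initial segment and ending at the distinct points $\tilde x$ and $\gamma\tilde x$). What is needed, and what the paper invokes, is the genuine everywhere non-branching property of $\RCD(K,N)$ spaces established in \cite[Theorem 1.3]{Qin20}. With that substitution your argument closes; note also that your two-point computation already shows that, for fixed $\gamma$, equality can occur at no more than one parameter value along $\sigma$, which is the sharper statement recorded in Lemma \ref{D_domain_lem}.
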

The proof is essentially the same as the Riemannian case. We include the proof for readers' convenience.

\begin{lem}\label{D_domain_lem}
	Let $\gamma\not= 1$ and let $\sigma: [0, L] \to \widetilde{X}$ be a minimal geodesic from $\tilde{x}$. If $\sigma(t_0)$ satisfies $\tilde \dist (\sigma(t_0),\tilde{x})=\tilde \dist (\sigma(t_0),\gamma \tilde{x})$ for some $t_0 \in [0, L]$, then $\sigma (t) \in H_\gamma(\tilde{x})$ for all $t \in [0, t_0)$ and $\sigma(t) \in \widetilde{X}-H_\gamma(\tilde{x})$ for all $t \in (t_0, L]$.
\end{lem}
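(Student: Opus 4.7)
The plan is to parametrize the minimal geodesic $\sigma$ by arclength, so that $\tilde\dist(\sigma(t),\tilde{x})=t$ for every $t\in[0,L]$, and then to split into the two one-sided statements.

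The easy direction is $t\in(t_0,L]$. Here I would simply apply the triangle inequality:
\[
\tilde\dist(\sigma(t),\gamma\tilde{x})\le \tilde\dist(\sigma(t),\sigma(t_0))+\tilde\dist(\sigma(t_0),\gamma\tilde{x})=(t-t_0)+t_0=t=\tilde\dist(\sigma(t),\tilde{x}),
\]
so $\sigma(t)\notin H_\gamma(\tilde{x})$ by definition.

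The harder direction is $t\in[0,t_0)$, where I must produce a \emph{strict} inequality $\tilde\dist(\sigma(t),\gamma\tilde{x})>t$. The triangle inequality applied to the triple $\sigma(t),\sigma(t_0),\gamma\tilde{x}$ and the hypothesis $\tilde\dist(\sigma(t_0),\gamma\tilde{x})=t_0$ give
\[
t_0\le(t_0-t)+\tilde\dist(\sigma(t),\gamma\tilde{x}),
\]
i.e.\ $\tilde\dist(\sigma(t),\gamma\tilde{x})\ge t$, so the issue is ruling out equality. Suppose, toward a contradiction, that $\tilde\dist(\sigma(t),\gamma\tilde{x})=t$ for some $t\in[0,t_0)$. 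The case $t=0$ is immediate: it would give $\tilde{x}=\gamma\tilde{x}$, contradicting $\gamma\neq 1$ since deck transformations act freely.

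For $t\in(0,t_0)$, I would pick a minimal geodesic $\tau:[0,t]\to\widetilde{X}$ from $\sigma(t)$ to $\gamma\tilde{x}$ (of length $t$) and concatenate it with the reversed piece $\sigma|_{[t,t_0]}$ to obtain a curve $\rho$ from $\sigma(t_0)$ to $\gamma\tilde{x}$ of length $(t_0-t)+t=t_0=\tilde\dist(\sigma(t_0),\gamma\tilde{x})$; so $\rho$ is itself a minimal geodesic. Parametrizing $\rho$ by arclength on $[0,t_0]$ and comparing with the reversed geodesic $\bar\sigma(s):=\sigma(t_0-s)$, which is a minimal geodesic from $\sigma(t_0)$ to $\tilde{x}$, I observe that $\rho|_{[0,t_0-t]}=\bar\sigma|_{[0,t_0-t]}$ but $\rho(t_0)=\gamma\tilde{x}\neq\tilde{x}=\bar\sigma(t_0)$. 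This is a genuine branching of two unit-speed minimal geodesics issuing from $\sigma(t_0)$. Appealing to the (essential) non-branching property of $\RCD(K,N)$ spaces applied to the pair $(\bar\sigma,\rho)$ — equivalently, to the uniqueness of the extension of a minimal geodesic beyond an interior point in $\widetilde{X}$, which is an $\RCD(K,N)$ space by the lifting argument cited just before the lemma — yields $\gamma\tilde{x}=\tilde{x}$, contradicting $\gamma\neq 1$.

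The only genuinely delicate step is the last one: converting the abstract branching produced by the two minimizers $\bar\sigma$ and $\rho$ into an actual contradiction. In the classical Riemannian setting this is automatic from the ODE uniqueness of geodesics, but in the singular setting one needs the non-branching property available for $\RCD(K,N)$ spaces (cf.\ the Rajala--Sturm-type results) to make sure that two minimal geodesics from $\sigma(t_0)$ that coincide on $[0,t_0-t]$ cannot diverge afterwards. Once that is invoked, the rest is triangle-inequality bookkeeping.
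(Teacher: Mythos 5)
Your argument is essentially the paper's: reduce to the equality case via the triangle inequality, concatenate a minimizer from $\sigma(t)$ to $\gamma\tilde{x}$ with the reversed piece of $\sigma$ to produce two minimal geodesics from $\sigma(t_0)$ that agree on $[0,t_0-t]$ but have distinct endpoints $\tilde{x}\neq\gamma\tilde{x}$ (freeness of the deck action), and invoke non-branching; the other direction is the same triangle-inequality bookkeeping. The one point to fix is the citation: \emph{essential} non-branching in the sense of Rajala--Sturm is a statement about almost every geodesic selected by optimal transport plans and does not rule out branching of the two specific geodesics you construct; what is needed (and what the paper uses) is the genuine pointwise non-branching of $\RCD(K,N)$ spaces due to Deng \cite[Theorem 1.3]{Qin20}. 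With that reference substituted, the proof is complete and matches the paper's.
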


\begin{proof}
	Let us prove the first statement by contradiction. If not, then there exists $s \in [0, t_0)$ such that $\tilde \dist(\sigma (s),\tilde{x})=\tilde \dist (\sigma(s),\gamma\tilde{x})$. Then
	\begin{align*}
		\tilde \dist (\gamma\tilde{x}, \sigma(t_0)) =\tilde \dist (\tilde{x}, \sigma(t_0))&=\tilde \dist(\tilde{x}, \sigma (s))+\tilde \dist(\sigma(s), \sigma (t_0)) \\
		&=\tilde \dist (\gamma\tilde{x}, \sigma (s))+\tilde \dist(\sigma (s), \sigma (t_0)),
	\end{align*}
	which in particular proves that a minimal geodesic from $\sigma(t_0)$ to $\gamma \tilde{x}$ can be branched at $\sigma(s)$.
	This contradicts a fact that  any minimal geodesic in $X$ is nonbranching \cite[Theorem 1.3]{Qin20}.
	
	Next, let us prove the second statement by contradiction again. If not, then there exists $s' \in (t_0, L]$ such that $\tilde \dist (\sigma(s'),\tilde{x})\le \tilde \dist (\sigma(s'),\gamma\tilde{x})$.
	If the equality holds, then we have a contradiction by the first statement. Thus we have
	\begin{align*}
		\tilde \dist (\sigma (s'), \gamma\tilde{x}) >\tilde \dist(\sigma (s'), \tilde{x})&=\tilde \dist (\sigma(s'), \sigma(t_0))+\tilde \dist(\sigma(t_0), \tilde{x}) \\
		&=\tilde \dist (\sigma(s'), \sigma(t_0))+\tilde \dist(\sigma(t_0), \gamma\tilde{x}) \ge \tilde \dist(\sigma(s'), \gamma\tilde{x}),
	\end{align*}
	which is also a contradiction.
\end{proof}

\begin{proof}[Proof of Proposition \ref{D_domain}]
	We first prove the claim below.
	
	\textit{Claim 1:} The family of closed sets $\{\widetilde{X}-H_\gamma(\tilde{x})\}_{\gamma\not=1}$ is locally finite. In fact, let $y\in\widetilde{X}$ and let $r=d(y,\tilde{x})$. Note that for all $\gamma\not=1$ and all $z\in (\widetilde{X}-H_\gamma(\tilde{x}))\cap B_1(y)$,
	\begin{align*}
		\tilde \dist (z,\gamma y)\le &\tilde \dist (z,y)+\dist (y,\gamma\tilde{x})+\tilde \dist (\gamma\tilde{x},\gamma y)\\
		< & 1+\tilde \dist(y,\tilde{x})+\tilde \dist (\tilde{x},y)=1+2r.
	\end{align*}
    This proves that
    $$(\widetilde{X}-H_\gamma(\tilde{x}))\cap B_1(y) \subset B_{1+2r}(\gamma y) \cap B_1(y).$$
	Because there are only finitely many $\gamma y$ in $B_{2+2r}(y)$, the right hand of the above inclusion relation is non-empty only for finitely many $\gamma$. Claim 1 follows. 
	
	It follows from Claim 1 that $D(\tilde{x})$ is an open subset of $\widetilde{X}$.
	
	By Lemma \ref{D_domain_lem}, for every $y\in D(\tilde{x})$, there is a minimal geodesic $\sigma$ from $\tilde{x}$ to $y$ such that $\sigma$ is contained in $H_\gamma(\tilde{x})$ for all $\gamma\not=1$, thus in $D(\tilde{x})$. This shows that $D(\tilde{x})$ is connected.
	
	Next, we define a set of representative points, denoted as $F$: for each orbit $\Gamma y$, we choose a point in the orbit such that it is the closest to $\tilde{x}$. 
	
	\textit{Claim 2:} $D(\tilde{x})\subset F \subset \overline{D(\tilde{x})}$. In fact, by the definition of $D(\tilde{x})$, any $y\in D(\tilde{x})$ satisfies
	$$\tilde \dist (y,\tilde{x})<\tilde \dist(y,\gamma\tilde{x})=\tilde \dist (\gamma^{-1}y,\tilde{x})$$
	for all $\gamma\not=1$. Thus $y\in F$ and we see that $D(\tilde{x})\subset F$. We prove the other inclusion. Let $y\in F$ and let $\sigma:[0,L]\to\widetilde{X}$ be a minimal geodesic from $\tilde{x}$ to $y$. Note that
	$$\tilde \dist (y,\tilde{x})\le \tilde \dist(\gamma^{-1}y,\tilde{x})=\tilde \dist(y,\gamma\tilde{x})$$
	for all $\gamma\not=1$. By Lemma \ref{D_domain_lem}, the image of $\sigma|_{[0,L)}$ is contained in $H_\gamma(\tilde{x})$ for all $\gamma\not=1$, thus in $D(\tilde{x})$. Hence $y=\sigma(L)\in \overline{D(\tilde{x})}$. This proves Claim 2.
	
	With Claim 2, now we prove that $D(\tilde{x})$ is a fundamental domain. To prove (1) in Definition \ref{def_F_domain}, suppose that there is $\gamma\not=1$ and $y_1,y_2\in D(\tilde{x})$ such that $\gamma y_1=y_2$. By Claim 2, both $y_1,y_2$ belong to $F$. Since the orbit $\Gamma y_1$ only has one point in $F$, we must have $y_1=y_2$, that is, $\gamma y_1=y_1$. This contradicts the facts that $\Gamma$-action is free and $\gamma\not=1$. To prove (2) in Definition \ref{def_F_domain}, note that
	$$\widetilde{X}=\bigcup_{\gamma\in \Gamma} \gamma F \subset \bigcup_{\gamma\in \Gamma} \gamma \overline{D(\tilde{x})}.$$ 
\end{proof}

\begin{defn}\label{mmdomain}
	A fundamental domain $D$ of the covering $\pi: (\widetilde{X},\tilde{\dist},\tilde{\mathfrak{m}}) \to (X,d,\mathfrak{m})$ is said to be a \textit{metric measure fundamental domain} if $\tilde{\mathfrak{m}}(\partial D)=0$.
\end{defn}
Since $\overline{\gamma D} \cap \overline{D} \subset \partial D$ holds for any $\gamma \neq 1$ and any fundamental domain $D$, we have 
\begin{equation}\label{fubini}
\int_{X}fd \meas = \int_{\widetilde X}\tilde fd\tilde \meas
\end{equation}
for any nonnegative function $\tilde f:\widetilde X\to [0, \infty]$ if $D$ is a metric measure fundamental domain, where $f(x)=\sum_{\gamma \in \Gamma}\tilde f(\gamma \tilde x)$ and $\pi (\tilde x)=x$. The proof of (\ref{fubini}) is as follows. 

Fistly, since $\pi$ is a local isomorphism as metric measure spaces, we have $\meas (X \setminus \pi(D))=0$. Then the monotone convergence theorem allows us to compute
\begin{align}
\int_{X} fd \meas =\int_{\pi(D)}fd\meas =\int_{\pi (D)}\sum_{\gamma \in \Gamma}\tilde f(\gamma \tilde x)d\tilde \meas(\tilde x) 
&=\sum_{\gamma \in \Gamma}\int_D\tilde f(\gamma \tilde x)d\tilde \meas(\tilde x) \nonumber \\
&=\sum_{\gamma \in \Gamma}\int_{\gamma D}\tilde f(\tilde x)d\tilde \meas (\tilde x) =\int_{\widetilde X}\tilde fd\tilde \meas.
\end{align}
This argument will play a role in the next subsection.

\begin{rem}\label{stbr}
A minimal geodesic $\sigma:[-\epsilon, \epsilon] \to \widetilde X$ in $\widetilde X$ is said to be \textit{strongly nonbranching} if a minimal geodesic $\eta: [-\delta, \delta] \to X$ for some $\delta<\epsilon$ satisfy $\sigma(0)=\eta(0)$ and $\angle \dot \sigma\dot \eta(0)=0$, then $\sigma=\eta$ on $[-\delta, \delta]$ (see \cite{HM, H} for the definition of angles).
Then if any minimal geodesic in $\widetilde {X}$ is strongly nonbranching, then a Dirichlet domain is a metric measure fundamental domain. The proof of this fact is as follows.

Since the boundary of $D(\tilde x)$ is included in 
\begin{equation}\label{eqboundary}
\bigcup_{1 \neq \gamma \in \Gamma}\left\{z \in \widetilde X \Big| \tilde \dist(z, \tilde x)=\tilde \dist (z, \gamma \tilde x)\right\}, 
\end{equation}
it is enough to prove that the set (\ref{eqboundary}) is $\tilde \meas$-negligible. If not, then there exists $\gamma \neq 1$ such that 
\begin{equation}\label{cutloc}
\tilde \meas \left( \left\{z \in \widetilde X \Big| \tilde \dist(z, \tilde x)=\tilde \dist (z, \gamma \tilde x)\right\} \setminus \left( \mathrm{Cut}(\tilde x)\cup \mathrm{Cut}(\gamma \tilde x)\right) \right)>0
\end{equation}
because the proof of \cite[Lemma 3.1]{GRS} allows us to conclude that the cut locus $\mathrm{Cut}(\tilde z)$ of any point $\tilde z$ in $\tilde X$ is $\tilde \meas$-negligible. 
On the other hand the locality of the minimal relaxed slope yields
\begin{equation}\label{angle}
|\nabla (\tilde \dist_{\tilde x}-\tilde \dist_{\gamma \tilde x})|^2=0,\quad \text{for $\tilde \meas$-a.e. $\tilde z \in \left\{z \in \widetilde X \Big| \tilde \dist(z, \tilde x)=\tilde \dist (z, \gamma \tilde x)\right\}$,}
\end{equation}
where $\tilde \dist_{\tilde z}$ denotes the distance function from $\tilde z$. In particular (\ref{cutloc}) and (\ref{angle}) imply that there exists $\widetilde z \in X \setminus  (\mathrm{Cut}(\tilde x) \cup \mathrm{Cut}(\gamma \tilde x))$ such that all two minimal geodesics $\sigma, \eta$ from $\tilde z$ to $\tilde x$, $\gamma \tilde x$ respectively satisfy $\angle \dot \sigma \dot \eta (0)=0$ which contradicts the strongly nonbranching property. Thus $\tilde \meas (\partial D(\tilde x))=0$.

Note that this strongly nonbranching property does not follow the nonbranching property established in \cite[Theorem 1.3]{Qin20}.
\end{rem}
\begin{rem}
We can easily see that Proposition \ref{D_domain} can be generalized to geodesic metric spaces with nonbranching property. It is worth mentioning that any Alexandrov space satisfies the strongly nonbranching property by definition. Thus in particular all of the above discussion can be applied to Alexandrov spaces with any Borel measure.
\end{rem}

\begin{rem}\label{Ye}
A result in a very recent preprint \cite{Ye23} proves that the right hand side of (\ref{eqboundary}) is actually $\meas$-negligible without assuming strongly nonbranching property, thus the Dirichlet domain centered at any point is actually a metric measure fundamental domain. See \cite[Theorem 4]{Ye23}.
\end{rem}

\subsection{Proof of Proposition \ref{prop:heat-cover}; the heat kernel on a covering space}\label{appp}
We continue our discussion with the same setting as in the previouse subsection.

For the simplicity of notation we give a proof of  Proposition \ref{prop:heat-cover} only in the case when $K=0$, however it is easy to generalize it to the general case when $K<0$ via the Li-Yau inequalities  (\ref{eq:gauusian}), (\ref{eq:grad gaus}) and (\ref{eq:lap gaussian}).
The proof is divided into several lemmas as follows.

For any $y \in X$, take (a sufficiently small) $r(y)>0$ satisfying that the preimage $\pi^{-1}(B_{s}(y))$ consists of the pairwise disjoint union of $\{B_{s}(\gamma \tilde y)\}_{\gamma \in \Gamma}$ for any $s \in (0, r(y)]$, where $\pi(\tilde y)=y$. Without loss of generality we can assume 
\begin{equation}\label{unifrem}
\inf_{y \in A}r(y)>0,\quad \text{for any bounded subset $A \subset X$.}
\end{equation}
Let us recall our main target;
for any $t>0$, define a symmetric function on $X \times X$;
\begin{equation}\label{heatcovering}
\Heat_1(x, y, t):=\sum_{\gamma \in \Gamma}\Heat_{\widetilde X}(\widetilde x, \gamma \tilde y, t) \in (0, \infty],
\end{equation} 
where $x, y \in X$ and $\tilde x, \tilde y \in \widetilde X$ with $\pi(\tilde x)=x$ and $\pi(\tilde y)=y$.
Note that the right hand side of (\ref{heatcovering}) does not depend on the choices of $\tilde x, \tilde y$, thus it is well-defined. 

\begin{lem}\label{finitenesslem}
$\Heat_1$ is finite. More precisely for any $s  \in (0,  r(y)]$, we have
\begin{align}\label{heatestimate}
\meas (B_{s}(y))\Heat_1(x, y, t) &\le C(N) \exp\left(\frac{s^2}{4t}\right)\int_{\widetilde X}\Heat_{\widetilde X} (\tilde x, \tilde z, 2t)d \tilde \meas( \tilde z) \nonumber \\
&\le C(N) \exp\left(\frac{s^2}{4t}\right)\tilde \meas (B_{\sqrt{t}}(\tilde x)).
\end{align}
\end{lem}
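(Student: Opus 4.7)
My plan is to deduce both inequalities from a local mean-value/Gaussian comparison for the heat kernel on $\widetilde X$, summed over the $\Gamma$-orbit of $\tilde y$.

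The key tool is the following pointwise inequality: for any $\tilde q\in \widetilde X$, any $t>0$ and any $s>0$ small enough,
\begin{equation*}
\Heat_{\widetilde X}(\tilde x,\tilde q,t)\cdot \tilde\meas(B_s(\tilde q))\le C(N)\exp\!\left(\tfrac{s^2}{4t}\right)\int_{B_s(\tilde q)}\Heat_{\widetilde X}(\tilde x,\tilde z,2t)\,d\tilde\meas(\tilde z).
\end{equation*}
This follows directly from the two-sided Li--Yau Gaussian estimate \eqref{eq:gauusian} on the $\RCD(0,N)$ space $\widetilde X$: use the upper bound on $\Heat_{\widetilde X}(\tilde x,\tilde q,t)$, the lower bound on $\Heat_{\widetilde X}(\tilde x,\tilde z,2t)$ for $\tilde z\in B_s(\tilde q)$, the triangle inequality $|\dist(\tilde x,\tilde z)-\dist(\tilde x,\tilde q)|\le s$, and Bishop--Gromov to compare $\tilde\meas(B_{\sqrt{t}}(\tilde x))$ and $\tilde\meas(B_{\sqrt{2t}}(\tilde x))$. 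The $\exp(s^2/(4t))$ factor absorbs the worst-case loss when bounding $\exp(-\dist(\tilde x,\tilde z)^2/ct)$ by $\exp(-\dist(\tilde x,\tilde q)^2/ct)\cdot\exp(\mathrm{cross\ term})$.

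Next I specialize to $\tilde q=\gamma\tilde y$ with $\gamma\in\Gamma$, taking $s\le r(y)$. By the defining property of $r(y)$, the projection $\pi$ restricts to a metric measure isometry from $B_s(\gamma\tilde y)$ onto $B_s(y)$, so $\tilde\meas(B_s(\gamma\tilde y))=\meas(B_s(y))$ for every $\gamma$, and moreover the balls $\{B_s(\gamma\tilde y)\}_{\gamma\in\Gamma}$ are pairwise disjoint in $\widetilde X$. Summing the pointwise inequality over $\gamma\in\Gamma$ and using disjointness on the right yields
\begin{equation*}
\meas(B_s(y))\,\Heat_1(x,y,t)\le C(N)\exp\!\left(\tfrac{s^2}{4t}\right)\sum_{\gamma}\int_{B_s(\gamma\tilde y)}\Heat_{\widetilde X}(\tilde x,\tilde z,2t)\,d\tilde\meas(\tilde z)\le C(N)\exp\!\left(\tfrac{s^2}{4t}\right)\int_{\widetilde X}\Heat_{\widetilde X}(\tilde x,\tilde z,2t)\,d\tilde\meas(\tilde z),
\end{equation*}
which is the first inequality and in particular shows $\Heat_1(x,y,t)<\infty$. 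For the second inequality, I bound the remaining integral using the Gaussian upper bound in \eqref{eq:gauusian}: integrating $\frac{C_1}{\tilde\meas(B_{\sqrt{2t}}(\tilde x))}\exp(-\dist(\tilde x,\cdot)^2/((4+\epsilon)(2t)))$ over $\widetilde X$ via a layer-cake decomposition together with Bishop--Gromov gives a bound in terms of $\tilde\meas(B_{\sqrt{t}}(\tilde x))$.

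The main obstacle is justifying the mean-value step rigorously in the $\RCD$ setting; on smooth manifolds this is classical Harnack, but here I need to extract it purely from the Li--Yau Gaussian bounds and Bishop--Gromov volume comparison, checking that all constants depend only on $N$. The rest of the argument is essentially bookkeeping: disjointness of lifted balls, isometry of $\pi$ on each lifted ball, and a dimensional integration estimate that is already standard in $\RCD(0,N)$ theory.
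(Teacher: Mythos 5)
Your argument has the same skeleton as the paper's proof: a mean-value inequality for $\Heat_{\widetilde X}(\tilde x,\cdot,t)$ over each ball $B_s(\gamma\tilde y)$, summation over $\Gamma$ using the disjointness of the lifted balls and $\tilde\meas(B_s(\gamma\tilde y))=\meas(B_s(y))$ for $s\le r(y)$, and then a Gaussian/Bishop--Gromov estimate for the remaining integral. The one place where you genuinely diverge is the justification of the mean-value step, and there your route does not quite deliver the stated constant. The paper simply quotes the parabolic Harnack inequality of \cite{GM}, \cite{Jiang2015}, which in Li--Yau form reads $\Heat_{\widetilde X}(\tilde x,\tilde q,t)\le 2^{N/2}\exp\bigl(\dist(\tilde q,\tilde z)^2/(4t)\bigr)\Heat_{\widetilde X}(\tilde x,\tilde z,2t)$ and, integrated over $\tilde z\in B_s(\tilde q)$, gives exactly the displayed inequality with the exponent $s^2/(4t)$. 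If instead you pair the upper bound of \eqref{eq:gauusian} at time $t$ with the lower bound at time $2t$, the exponent you must absorb is $-d^2/((4+\epsilon)t)+(d+s)^2/((8-2\epsilon)t)$ with $d=\dist(\tilde x,\tilde q)$, and optimizing over $d$ yields $\exp(C_\epsilon s^2/t)$ with $C_\epsilon>1/4$ for every fixed $\epsilon>0$ (one has $C_\epsilon\to 1/4$ only as $\epsilon\to 0^+$, at the price of $C_1(N,\epsilon)\to\infty$). So your method proves the lemma with $1/4$ replaced by $1/4+\delta$; this is perfectly adequate for Corollary~\ref{corsum} and Lemma~\ref{lapstab}, but to get the literal statement you should invoke the parabolic Harnack inequality as the paper does rather than reconstruct it from the two-sided Gaussian bounds.

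On the final inequality, be aware that the honest output of your layer-cake computation is the constant $C(N)$ (indeed $\int_{\widetilde X}\Heat_{\widetilde X}(\tilde x,\cdot,2t)\,d\tilde\meas=1$ by conservativeness of the heat flow), not $C(N)\,\tilde\meas(B_{\sqrt t}(\tilde x))$: the latter expression is not invariant under rescaling $\tilde\meas\mapsto b\tilde\meas$ while the left-hand side of \eqref{heatestimate} is, so no argument can produce it with a constant depending only on $N$. The clean statement to prove and use is $\meas(B_s(y))\Heat_1(x,y,t)\le C(N)\exp\bigl(s^2/(4t)\bigr)$, which is what the first line of \eqref{heatestimate} already gives and which only strengthens the conclusions drawn from it later.
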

\begin{proof}
Applying the parabolic Harnack inequality proved in \cite[Theorem 1.4]{GM} and in \cite[Theorem 1.3]{Jiang2015} implies
\begin{equation*}
\tilde \meas (B_{s}(\tilde y)) \Heat_{\widetilde X} (\tilde x, \tilde y, t) \le C(N) \exp\left(\frac{s^2}{4t}\right) \int_{B_{s}(\tilde y)}\Heat_{\widetilde X} (\tilde x, \tilde z, 2t)d \tilde \meas( \tilde z). 
\end{equation*}
Thus
\begin{align*}
\meas (B_s(y))\Heat_1(x,y,t)
&= \sum_{\gamma \in \Gamma} \tilde \meas (B_s(\gamma \tilde y))\Heat_{\widetilde X}(\tilde x, \gamma \tilde y, t)\\
&\le C(N) \exp\left(\frac{s^2}{4t}\right) \int_{\widetilde X}\Heat_{\widetilde X} (\tilde x, \tilde z, 2t)d \tilde \meas( \tilde z). 
\end{align*}
The final inequality in (\ref{heatestimate}) comes from the Li-Yau inequality (\ref{eq:gauusian}), Bishop-Gromov inequality and \cite[Lemma 2.7]{BGHZ}.
\end{proof}
\begin{cor}\label{corsum}
If $s \in (0, r(y)]$, then  
\begin{equation}\label{infinitesum}
\sum_{\gamma \in \Gamma}\exp \left(-\frac{\tilde \dist(\tilde x, \gamma \tilde y)^2}{3t}\right)\le C(N) \exp\left(\frac{s^2}{4t}\right)\frac{\tilde \meas (B_{\sqrt{t}}(\tilde x))^2}{\meas (B_s(y))}.
\end{equation}
\end{cor}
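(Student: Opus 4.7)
The plan is to combine the Gaussian lower bound for the heat kernel on $\widetilde X$ with the upper bound on $\Heat_1(x,y,t)$ already provided by Lemma \ref{finitenesslem}. More concretely, I would proceed as follows.

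First, apply the lower Li--Yau estimate in \eqref{eq:gauusian} on the $\RCD(0,N)$ space $(\widetilde X,\tilde \dist,\tilde \meas)$ (so that $C_2=0$) with some fixed $\epsilon\in (0,1)$, which I will take so that $4-\epsilon>3$ (e.g.\ $\epsilon=1/2$). Applied at the pair $(\tilde x,\gamma \tilde y)$, this gives, for every $\gamma \in \Gamma$,
\begin{equation*}
\exp\!\left(-\frac{\tilde \dist(\tilde x,\gamma \tilde y)^2}{3t}\right)
\le \exp\!\left(-\frac{\tilde \dist(\tilde x,\gamma \tilde y)^2}{(4-\epsilon)t}\right)
\le C(N)\, \tilde \meas(B_{\sqrt{t}}(\tilde x))\, \Heat_{\widetilde X}(\tilde x,\gamma \tilde y,t).
\end{equation*}

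Next, I would sum this inequality over $\gamma \in \Gamma$. By the very definition \eqref{heatcovering} of $\Heat_1$, the sum on the right-hand side equals $\Heat_1(x,y,t)$, so
\begin{equation*}
\sum_{\gamma \in \Gamma}\exp\!\left(-\frac{\tilde \dist(\tilde x,\gamma \tilde y)^2}{3t}\right)
\le C(N)\, \tilde \meas(B_{\sqrt{t}}(\tilde x))\, \Heat_1(x,y,t).
\end{equation*}

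Finally, plug in the upper bound from Lemma \ref{finitenesslem}, namely
\begin{equation*}
\meas(B_s(y))\, \Heat_1(x,y,t)\le C(N)\exp\!\left(\frac{s^2}{4t}\right)\tilde \meas(B_{\sqrt{t}}(\tilde x)),
\end{equation*}
to obtain the claimed estimate. The argument is essentially a one-line assembly and there is no genuine obstacle: the only small care needed is the choice of $\epsilon$ in the lower Gaussian bound to replace the factor $(4-\epsilon)^{-1}$ by $3^{-1}$, which is harmless thanks to the monotonicity $\exp(-a/3)\le \exp(-a/(4-\epsilon))$ for $\epsilon<1$.
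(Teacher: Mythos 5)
Your proposal is correct and follows exactly the paper's own argument: the lower Li--Yau bound (with $K=0$, so $C_2=0$, and $\epsilon<1$ so that $\exp(-a/3)\le\exp(-a/(4-\epsilon))$) converts the Gaussian sum into $C(N)\,\tilde\meas(B_{\sqrt t}(\tilde x))\,\Heat_1(x,y,t)$, and then Lemma \ref{finitenesslem} finishes the estimate. No gaps.
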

\begin{proof}
Since the Li-Yau inequality (\ref{eq:gauusian}) implies
\begin{equation*}
\Heat_1(x, y, t) =\sum_{\gamma \in \Gamma}\Heat_{\widetilde X}(\tilde x, \gamma \tilde y, t) \ge \frac{C(N)}{\tilde \meas (B_{\sqrt{t}}(\tilde x))} \sum_{\gamma \in \Gamma}\exp \left( -\frac{\tilde \dist (\tilde x, \gamma \tilde y)^2}{3t}\right),
\end{equation*}
we have (\ref{infinitesum}) because of (\ref{heatestimate}).
\end{proof}
In order to continue the proof of Proposition \ref{prop:heat-cover}, let us recall local notions of Sobolev functions and Laplacian.
\begin{defn}[Local Sobolev and Laplacian]
Let us denote by $H^{1,2}_{\mathrm{loc}}(X, \dist, \meas)$ the set of all $f \in L^2_{\mathrm{loc}}(X, \meas )$ satisfying that $\phi f \in H^{1,2}(X, \dist, \meas)$ for any $\phi \in \mathrm{Lip}_c(X, \dist)$ and that $|\nabla f| \in L^2_{\mathrm{loc}}(X, \meas)$, where $|\nabla f|$ makes sense on $X$ because of the locality of the minimal relazed slope, where $\mathrm{Lip}_c(X, \dist)$ denotes the set of all Lipschitz functions on $X$ with compact supports. Moreover we denote by $D_{\mathrm{loc}}(\Delta)$ the set of all $f \in H^{1,2}_{\mathrm{loc}}(X, \dist, \meas)$ satisfying that there exists a (unique) $\psi \in L^2_{\mathrm{loc}}(X, \meas)$, denoted by $\psi=\Delta f$ if no confusion, such that 
\begin{equation}
\int_X\langle \nabla f, \nabla \phi\rangle d\meas=-\int_X\psi \phi d\meas,\quad \text{for any $\phi \in \mathrm{Lip}_c(X, \dist)$.}
\end{equation}
\end{defn}
The stability properties of local Sobolev functions and of local Laplacian can be found in \cite{AmbrosioHonda}.
\begin{lem}\label{lapstab}
$\Heat_1$ is locally Lipschitz continuous. Moreover for all $y \in X$ and $t>0$, $\Heat_1(x, y, t)$ viewed as a function of $x$ on $X$, is in $D_{\mathrm{loc}}(\Delta)$ with
\begin{equation}
\Delta_x\Heat_1 (x,y,t)=\sum_{\gamma \in \Gamma}\Delta_x \Heat_{\widetilde X} (\tilde x,\gamma y,t)=\sum_{\gamma \in \Gamma}\frac{d}{dt}\Heat_{\widetilde X} (x,\gamma y,t), \quad \text{in $L^2(X, \meas)$.}
\end{equation}
\end{lem}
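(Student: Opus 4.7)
The plan is to prove the lemma first on $\widetilde{X}$ using partial sums and local stability, and then descend to $X$ via the local isomorphism property of $\pi$.

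First, I would establish that the series of gradients and Laplacians are locally absolutely convergent on $\widetilde{X}$. The idea is to combine the Gaussian upper bounds \eqref{eq:grad gaus}, \eqref{eq:lap gaussian} for $|\nabla|$ and $|\Delta|$ (both decaying like $\exp(-\tilde{\dist}^2/((4+\epsilon)t))$) with the Gaussian lower bound in \eqref{eq:gauusian} for $\Heat_{\widetilde{X}}$ at a slightly larger time $t'$. With $\epsilon$ small and $t' = 2t$ (any $t' \ge t(4+\epsilon)/(4-\epsilon)$ works), and Bishop--Gromov to control the ratio of ball volumes, one gets pointwise comparisons of the form
\[
|\nabla_{\tilde{x}}\Heat_{\widetilde{X}}(\tilde{x},\gamma\tilde{y},t)| \le C(t)\, \Heat_{\widetilde{X}}(\tilde{x},\gamma\tilde{y},t'), \qquad |\Delta_{\tilde{x}}\Heat_{\widetilde{X}}(\tilde{x},\gamma\tilde{y},t)| \le C'(t)\, \Heat_{\widetilde{X}}(\tilde{x},\gamma\tilde{y},t'),
\]
uniformly on compact sets. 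Summation over $\gamma \in \Gamma$ then reduces matters to controlling $\Heat_1(\pi(\tilde{x}),y,t')$, whose local finiteness is provided by Lemma \ref{finitenesslem}.

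Second, I would work with the partial sums $F_K(\tilde{x}) := \sum_{\gamma \in K} \Heat_{\widetilde{X}}(\tilde{x},\gamma\tilde{y},t)$ over finite $K \subset \Gamma$. Each $F_K$ is locally Lipschitz and lies in $D_{\mathrm{loc}}(\Delta)$ on $\widetilde{X}$, with
\[
\Delta_{\tilde{x}} F_K = \sum_{\gamma \in K} \Delta_{\tilde{x}} \Heat_{\widetilde{X}}(\tilde{x},\gamma\tilde{y},t) = \sum_{\gamma \in K} \frac{d}{dt} \Heat_{\widetilde{X}}(\tilde{x},\gamma\tilde{y},t),
\]
the second equality being the heat equation \eqref{eq:heat flow} applied to each term. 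As $K \nearrow \Gamma$, the estimates of the previous paragraph yield that $F_K \to F_\Gamma := \Heat_1(\pi(\cdot),y,t)$ (a $\Gamma$-invariant function on $\widetilde{X}$) locally uniformly, that the gradient series converges absolutely and locally uniformly, and that $\Delta F_K$ converges in $L^\infty_{\mathrm{loc}}(\widetilde{X},\tilde{\meas})$. Local stability of $H^{1,2}_{\mathrm{loc}}$ and of the Laplacian on $\RCD$ spaces, as developed in \cite{AmbrosioHonda}, then gives $F_\Gamma \in D_{\mathrm{loc}}(\Delta)$ on $\widetilde{X}$ with the expected sum formula for $\Delta F_\Gamma$, and that $F_\Gamma$ is locally Lipschitz.

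Finally, I would descend to $X$. Both $F_\Gamma$ and its Laplacian are $\Gamma$-invariant on $\widetilde{X}$, the latter because each orbit translation $\delta \in \Gamma$ simply permutes the terms of the sum via $\Heat_{\widetilde{X}}(\delta\tilde{x},\gamma\tilde{y},t) = \Heat_{\widetilde{X}}(\tilde{x},\delta^{-1}\gamma\tilde{y},t)$. Because $\pi: \widetilde{X} \to X$ is a local isomorphism of metric measure spaces and the minimal relaxed slope together with the Laplacian are purely local notions, any $\Gamma$-invariant locally Lipschitz function (respectively, $D_{\mathrm{loc}}(\Delta)$ function) on $\widetilde{X}$ descends to a locally Lipschitz function (respectively, $D_{\mathrm{loc}}(\Delta)$ function) on $X$ with the corresponding gradient and Laplacian; applying this to $F_\Gamma$ yields the lemma. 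The main obstacle is the stability step in the second paragraph: one has to verify that local absolute convergence of gradients and Laplacians of the partial sums suffices to conclude $F_\Gamma \in D_{\mathrm{loc}}(\Delta)$ with the expected Laplacian, which relies on the closability of the Cheeger/Dirichlet form and on the purely local character of $\Delta$ in the $\RCD$ setting.
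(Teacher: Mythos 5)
Your proposal is correct and follows essentially the same route as the paper: partial sums over finite subsets of $\Gamma$, the Gaussian gradient and Laplacian bounds \eqref{eq:grad gaus}, \eqref{eq:lap gaussian} combined with the lower bound in \eqref{eq:gauusian} and the finiteness of $\Heat_1$ (Lemma \ref{finitenesslem}, packaged in the paper as Corollary \ref{corsum}) to produce locally uniform summable majorants, followed by equi-local Lipschitz continuity and the stability of the Laplacian to pass to the limit. Your explicit lift-to-$\widetilde{X}$-and-descend framing is only a cosmetic difference from the paper's presentation.
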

\begin{proof}
Let $A$ be a finite subset of $\Gamma$.
Then for any $s \in (0, r(y)]$ by (\ref{eq:grad gaus}) and Corollary \ref{corsum} we have
\begin{align}\label{eq:01}
\left|\nabla_{\tilde x} \sum_{\gamma \in A}\Heat_{\widetilde X} (\tilde x, \gamma \tilde y, t)\right|
&\le \sum_{\gamma \in \Gamma}\left| \nabla_{\widetilde x}\Heat_{\tilde X} (\tilde x,\gamma \tilde y,t)\right| \nonumber \\
&\le \frac{C(N)}{\sqrt{t}\tilde \meas (B_{\sqrt{t}}(\tilde x))}\sum_{\gamma \in \Gamma} \exp \left( -\frac{\tilde \dist( \tilde x, \gamma \tilde y)^2}{5t}\right)\nonumber \\
&\le \frac{C(N)}{\sqrt{t}\tilde \meas (B_{\sqrt{t}}(\tilde x))} \cdot \exp\left(\frac{s^2}{t}\right)\cdot \frac{\tilde \meas (B_{\sqrt{5t/3}}(\tilde x))^2}{\meas(B_{s}(y))}\nonumber \\
& \le C(N)\exp\left(\frac{s^2}{t}\right)\frac{\tilde \meas (B_{\sqrt{t}}(\tilde x))}{\sqrt{t}\meas (B_s(y))}
\end{align}
which gives an equi-local Lipschitz continuity of $\{\sum_{\gamma \in A}\Heat_{\widetilde X} (\tilde x, \gamma \tilde y, t)\}_{A \subseteq \Gamma}$ on $\tilde X\times \tilde X \times (0, \infty)$ because of (\ref{unifrem}). Thus letting $A \uparrow \Gamma$ completes the proof of the desired local Lipschitz continuity.

The remaining statements come from the dominated convergence theorem, the stability of the Laplacian and an estimate;
\begin{equation*}\label{eq:02}
\left|\frac{d}{dt} \sum_{\gamma \in A}\Heat_{\widetilde X}(\tilde x, \gamma \tilde y, t)\right| \le C(N)\exp\left(\frac{s^2}{t}\right)\cdot\frac{\tilde \meas (B_{\sqrt{t}}(\tilde x))}{t\meas (B_s(y))}
\end{equation*}
which is proved by a similar way as in (\ref{eq:01}) via (\ref{eq:lap gaussian}).
\end{proof}
\begin{lem}\label{l2lem}
For all $x \in X$ and $t>0$, $\Heat_1(x, y, t)$ viewed as a function of $y$ on $X$, is in $D(\Delta)$ if some fundamental domain $D$ of $X$ has the $\meas$-negligible topological boundary.
\end{lem}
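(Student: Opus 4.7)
The plan is to show that the function $\Heat_1(x,\cdot,t)$, which Lemma~\ref{lapstab} (applied in the $y$-variable via the symmetry $\Heat_1(x,y,t)=\Heat_1(y,x,t)$, itself inherited from the symmetry of $\Heat_{\widetilde X}$ and reindexing by $\gamma\mapsto\gamma^{-1}$) already puts in $D_{\mathrm{loc}}(\Delta)$, in fact sits in $L^2(X,\meas)$ together with its gradient and Laplacian, and then to extend the local integration-by-parts to test functions in $H^{1,2}(X,\dist,\meas)$ by density of compactly supported Lipschitz functions.

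The first step is to establish the semigroup identity
\begin{equation*}
\Heat_1(x,y,t)=\int_X \Heat_1(x,z,t/2)\,\Heat_1(z,y,t/2)\,d\meas(z).
\end{equation*}
Starting from Chapman--Kolmogorov for $\Heat_{\widetilde X}$, I would sum over $\gamma\in\Gamma$, exchange the sum with the $\tilde\meas$-integral by nonnegativity, and recognise the inner sum as $\Heat_1(\pi(\tilde w),y,t/2)$. Decomposing $\widetilde X=\bigsqcup_{\gamma'}\gamma' D$ (legitimate modulo a $\tilde\meas$-null set since $\tilde\meas(\partial D)=0$), changing variables by the isometry $\gamma'$, and then pulling the resulting sum back inside the integral to rebuild $\Heat_1(x,z,t/2)$ with $z=\pi(\tilde v)$, an application of (\ref{fubini}) collapses the integral over $D\subset\widetilde X$ to one over $X$. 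Setting $y=x$ and using symmetry of $\Heat_1$ yields $\|\Heat_1(x,\cdot,t)\|_{L^2(X,\meas)}^2=\Heat_1(x,x,2t)$, which is finite by Lemma~\ref{finitenesslem}.

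The second step is the pointwise estimate
\begin{equation*}
\sqrt{t}\,|\nabla_y \Heat_1(x,y,t)|+t\,|\Delta_y \Heat_1(x,y,t)|\le C\,\Heat_1(x,y,2t),
\end{equation*}
with $C=C(N,K,t)$. These are obtained termwise: divide the upper Gaussian bounds (\ref{eq:grad gaus}) and (\ref{eq:lap gaussian}) for $\Heat_{\widetilde X}(\tilde x,\gamma\tilde y,t)$ by the lower Gaussian bound from (\ref{eq:gauusian}) for $\Heat_{\widetilde X}(\tilde x,\gamma\tilde y,2t)$; choosing the Gaussian parameters ($\epsilon=\epsilon'=1$ say) so that the exponential factor is bounded by $1$, and invoking Bishop--Gromov to bound the volume ratio $\tilde\meas(B_{\sqrt{2t}}(\tilde x))/\tilde\meas(B_{\sqrt{t}}(\tilde x))$, one gets a $\gamma$-independent constant. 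Summing over $\Gamma$ (the right side becomes $\Heat_1(x,y,2t)$) and using that the local isomorphism $\pi$ preserves minimal relaxed slopes gives the claim, together with the Lemma~\ref{lapstab} identification $\Delta_y\Heat_1=\partial_t\Heat_1$. Combined with the semigroup identity at time $2t$,
\begin{equation*}
\int_X |\nabla_y \Heat_1|^2 d\meas\le \frac{C^2}{t}\Heat_1(x,x,4t),\qquad \int_X |\Delta_y \Heat_1|^2 d\meas\le \frac{C^2}{t^2}\Heat_1(x,x,4t),
\end{equation*}
both finite by Lemma~\ref{finitenesslem}. Given that Lemma~\ref{lapstab} provides the integration-by-parts identity for all $\phi\in\mathrm{Lip}_c(X,\dist)$, the global $L^2$ bounds above allow us to extend the identity to arbitrary $\phi\in H^{1,2}(X,\dist,\meas)$ by standard cut-off/density arguments (using completeness and the RCD volume growth), yielding $\Heat_1(x,\cdot,t)\in D(\Delta)$.

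The main obstacle is the careful bookkeeping in the semigroup identity: the interchange of the $\Gamma$-sum with the $\tilde\meas$-integral (Fubini/Tonelli, safe by nonnegativity) is routine, but the subsequent collapse $\int_{\widetilde X}\to\int_X$ crucially depends on $\tilde\meas(\partial D)=0$ for the decomposition $\widetilde X=\bigsqcup_{\gamma'}\gamma' D$ to be measure-theoretically exact; one must also verify that after the change of variables the integrand is $\Gamma$-invariant in the free variable before descending to $X$. Once the semigroup identity is secured, the remaining steps are straightforward consequences of the Gaussian heat kernel bounds already recorded in \S\ref{pre}.
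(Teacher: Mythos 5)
Your proof is correct and follows essentially the same route as the paper: the paper's own argument computes $\int_X\Heat_1(x,y,t)^2\,d\meas(y)=\Heat_1(x,x,2t)$ by expanding the double sum over $\gamma_1,\gamma_2\in\Gamma$ on the fundamental domain (which is exactly your Chapman--Kolmogorov identity specialized to the diagonal), and then controls $|\nabla_y\Heat_{\widetilde X}|$ and $|\Delta_y\Heat_{\widetilde X}|$ pointwise by $C(N)t^{-1/2}\Heat_{\widetilde X}(\cdot,\cdot,2t)$ and $C(N)t^{-1}\Heat_{\widetilde X}(\cdot,\cdot,2t)$ via the Li--Yau upper and lower bounds, just as you do. Your packaging of the full off-diagonal semigroup identity and your explicit remark on extending the integration-by-parts identity from $\mathrm{Lip}_c$ to $H^{1,2}$ by density are slightly more detailed than the paper's presentation, but mathematically identical in substance.
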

\begin{proof}
Since
\begin{align}\label{l2estimateheat}
\int_X\Heat_1(x, y, t)^2d\meas (y)
&=\sum_{\gamma_1, \gamma_2 \in \Gamma}\int_{D}\Heat_{\widetilde X}(\tilde x, \gamma_1 \tilde y, t)\Heat_{\widetilde X}(\tilde x, \gamma_2\tilde y, t)d\tilde \meas (\tilde y)\nonumber \\
&=\sum_{\gamma_1, \gamma_2 \in \Gamma}\int_{D}\Heat_{\widetilde X}(\gamma_1^{-1}\tilde x, \tilde y, t)\Heat_{\widetilde X}(\tilde x, \gamma_2\tilde y, t)d\tilde \meas (\tilde y)\nonumber \\
&=\sum_{\gamma_1, \gamma_2 \in \Gamma}\int_{D}\Heat_{\widetilde X}(\gamma_2\gamma_1^{-1}\tilde x,  \gamma_2\tilde y, t)\Heat_{\widetilde X}(\tilde x, \gamma_2\tilde y, t)d\tilde \meas (\tilde y)\nonumber \\
&=\sum_{\gamma_2 \in \Gamma}\left( \int_{D}\left(\sum_{\gamma_1 \in \Gamma}\Heat_{\tilde X}(\gamma_2\gamma_1^{-1}\tilde x,  \gamma_2\tilde y, t)\right)\Heat_{\widetilde X}(\tilde x, \gamma_2\tilde y, t)d\tilde \meas (\tilde y)\right)\nonumber \\
&=\sum_{\gamma_2 \in \Gamma}\left( \int_{D}\left(\sum_{\gamma \in \Gamma}\Heat_{\widetilde X}(\gamma \tilde x,  \gamma_2\tilde y, t)\right)\Heat_{\widetilde X}(\tilde x, \gamma_2\tilde y, t)d\tilde \meas (\tilde y)\right)\nonumber \\
&=\sum_{\gamma \in \Gamma}\left(\sum_{\gamma_2 \in \Gamma}\int_{X_1}\Heat_{\widetilde X} (\gamma \tilde x, \gamma_2\tilde y, t)\Heat_{\widetilde X} (\tilde x, \gamma_2\tilde y, t)d\tilde \meas (\tilde y)\right) \nonumber \\
&=\sum_{\gamma \in \Gamma}\int_{\widetilde X}\Heat_{\widetilde X} (\gamma \tilde x, \tilde y, t)\Heat_{\widetilde X}(\tilde x, \tilde y, t)d\tilde \meas (\tilde y) \nonumber \\
&=\Heat_1(\tilde x, \tilde x, 2t)<\infty,
\end{align}
we have $\Heat_1 \in L^2(X, \meas)$.
On the other hand, 
\begin{align}\label{h12est}
\int_X|\nabla_y\Heat_1|^2(x,y,t)d\meas (y)
&=\int_{D}\left\langle \nabla_{\tilde y}\sum_{\gamma \in \Gamma}\Heat_{\widetilde X}(\tilde x, \gamma \tilde y, t), \nabla_{\tilde y}\sum_{\gamma \in \Gamma}\Heat_{\widetilde X}(\tilde x, \gamma \tilde y, t)\right\rangle d\tilde\meas (\tilde y) \nonumber \\
&\le \sum_{\gamma_1, \gamma_2 \in \Gamma}\int_{D}\left|\nabla_{\tilde y}\Heat_{\widetilde X}(\tilde x, \gamma_1 \tilde y, t)\right| \cdot \left|\nabla_{\tilde y}\Heat_{\widetilde X}(\tilde x, \gamma_2 \tilde y, t)\right| d\tilde \meas (\tilde y). 
\end{align}
Since the Li-Yau inequalities (\ref{eq:gauusian}) and (\ref{eq:grad gaus}) show for $\tilde x , \tilde y \in \tilde X$
\begin{equation*}
|\nabla_{\tilde y}\Heat_{\widetilde X}(\tilde x, \tilde y, t)| \le \frac{C(N)}{\sqrt{t}\meas (B_{\sqrt{t}}(\tilde y))}\exp \left( -\frac{\tilde \dist (\tilde x, \tilde y)^2}{5t}\right) \le \frac{C(N)}{\sqrt{t}}\Heat_{\widetilde X}(\tilde x, \tilde y, 2t), 
\end{equation*}
the right hand side of (\ref{h12est}) is bounded above by
\begin{equation*}
\frac{C(N)}{t}\sum_{\gamma_1, \gamma_2 \in \Gamma}\int_{D}\Heat_{\widetilde X}(\tilde x, \gamma_1\tilde y, 2t) \Heat_{\widetilde X}(\tilde x, \gamma_2\tilde y, 2t)d\tilde \meas (\tilde y)
\end{equation*}
which is also bounded above by $C(N)t^{-1}\Heat_1(x,x,4t)$ because of (\ref{l2estimateheat}). Thus $\Heat_1 \in H^{1,2}(X, \dist, \meas)$. Similarly, using Lemma \ref{lapstab} with
\begin{equation*}
|\Delta_{\tilde y}\Heat_{\widetilde X}(\tilde x, \tilde y, t)| \le C(N)t^{-1}\Heat_{\widetilde X}(\tilde x, \tilde y, 2t)
\end{equation*}
which is justified by the Li-Yau inequalities (\ref{eq:gauusian}) and (\ref{eq:lap gaussian}) (or \cite[Theorem 3]{Davies}), we have
\begin{equation*}
\int_X\left(\Delta_y\Heat_1(x,y,t) \right)^2d\meas (y)\le \frac{C(N)}{t^{2}}\Heat_{\widetilde X}(\tilde x, \tilde x, 4t)<\infty.
\end{equation*}
Thus $\Heat_1 \in D(\Delta)$.
\end{proof}
From now on we assume that some fundamental domain $D$ has the $\meas$-negligible topological boundary (see also Remark \ref{Ye}).
Note that 
\begin{equation*}
\int_X\Heat_1(x,y,t)d\meas(\tilde y)=\sum_{\gamma \in \Gamma}\int_{D}\Heat_{\widetilde X}(\tilde x, \gamma \tilde y, t)d\tilde \meas(\tilde y)=\int_{\widetilde X}\Heat_{\widetilde X}(\tilde x, \tilde y, t)d\tilde \meas (\tilde y)=1.
\end{equation*}

Then thanks to Lemma \ref{l2lem}, for all $f \in L^2(X, \meas), x \in X$ and $t>0$,
\begin{equation*}
\heat_t^1f(x):=\int_X\Heat_1(x,y,t)f(y)d\meas (y)
\end{equation*}
is well-defined.
\begin{lem}\label{lem:2}
Let $f \in C_c(X)$. Then we have the following;
\begin{enumerate}
\item $\heat^1_tf(x)=\heat_t (f\circ \pi)(x)$ with $\heat^1_tf(x) \to f(x)$ as $t \to 0^+$ for  any $x \in X$. In particular  $\heat^1_tf$ $L^2$-locally strongly converge to $f$ as $t \to 0^+$;
\item for any $t>0$, $\heat^1_tf \in D(\Delta)$ with $\frac{d}{dt}\heat^1_tf=\Delta \heat^1_tf$ in $L^2(X, \meas)$.
\end{enumerate}
\end{lem}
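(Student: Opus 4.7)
The plan is to unfold the sum defining $\Heat_1$ against a metric measure fundamental domain $D$, identify $\heat^1_t f$ with the heat flow on $\widetilde{X}$ applied to the $\Gamma$-invariant lift $f \circ \pi$, and then transfer standard heat-flow properties from $\widetilde{X}$ back down to $X$.

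For part (1), I would first compute, for any $\tilde{x} \in \pi^{-1}(x)$, using the definition of $\Heat_1$, the decomposition $\widetilde{X} = \bigcup_{\gamma \in \Gamma}\gamma\overline{D}$ modulo $\tilde\meas$-null sets (Definition \ref{mmdomain}), and Tonelli for the non-negative rearrangement:
\begin{align*}
\heat^1_t f(x) &= \sum_{\gamma \in \Gamma}\int_D \Heat_{\widetilde{X}}(\tilde{x},\gamma\tilde{y},t)(f\circ\pi)(\tilde{y})\, d\tilde\meas(\tilde{y}) \\
&= \int_{\widetilde{X}} \Heat_{\widetilde{X}}(\tilde{x},\tilde{z},t)(f\circ\pi)(\tilde{z})\, d\tilde\meas(\tilde{z}) = \heat_t^{\widetilde{X}}(f\circ\pi)(\tilde{x}),
\end{align*}
where the interchange is justified by $|f| \le \|f\|_\infty \mathbf{1}_{\mathrm{supp}\, f}$ and Lemma \ref{finitenesslem}. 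The right-hand side is $\Gamma$-invariant in $\tilde{x}$, so it descends to $X$ as required. For the limit as $t \to 0^+$, since $f \circ \pi$ is bounded and continuous and $\Heat_{\widetilde{X}}(\tilde{x},\cdot,t)$ is an approximate identity (unit mass plus the Gaussian concentration (\ref{eq:gauusian})), a standard argument gives $\heat^1_t f(x) \to f(x)$ pointwise. Since $|\heat^1_t f| \le \|f\|_\infty$ uniformly, dominated convergence on any bounded subset upgrades this to $L^2$-local strong convergence.

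For part (2), the identification from part (1) exhibits $\heat^1_t f$ as the descent of $\tilde u_t := \heat_t^{\widetilde X}(f\circ\pi)$. Differentiating under the integral, using (\ref{eq:lap gaussian}) and the compact support of $f$ for domination, yields $\tilde u_t \in D_{\mathrm{loc}}(\Delta^{\widetilde X})$ with $\tfrac{d}{dt}\tilde u_t = \Delta^{\widetilde X}\tilde u_t$ pointwise and in $L^2_{\mathrm{loc}}$. Since $\pi$ is a local isomorphism of metric measure spaces, the locality of the Cheeger energy and of the Laplacian transfers this to $\heat^1_t f \in D_{\mathrm{loc}}(\Delta)$ on $X$ with $\tfrac{d}{dt}\heat^1_t f = \Delta\heat^1_t f$. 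To upgrade to $D(\Delta)$, I would show global $L^2$-integrability of $\heat^1_t f$, $|\nabla\heat^1_t f|$, and $\Delta\heat^1_t f$. The bound
\begin{equation*}
\|\heat^1_t f\|_{L^2(X,\meas)}^2 \le \|f\|_{L^2(X,\meas)}^2
\end{equation*}
follows from Cauchy--Schwarz applied to $\Heat_1(x,y,t) = \Heat_1(x,y,t)^{1/2}\cdot \Heat_1(x,y,t)^{1/2}$ together with $\int_X \Heat_1(\cdot,y,t)\, d\meas = 1$ (itself a consequence of part (1) applied to $f \equiv 1$), while the $L^2$ bounds for the gradient and Laplacian come from integrating the kernel-side estimates in Lemmas \ref{lapstab} and \ref{l2lem} against $f$ and invoking (\ref{eq:grad gaus}) and (\ref{eq:lap gaussian}).

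The main obstacle is justifying the interchanges of summation, differentiation, and integration in the passage between $\widetilde{X}$ and $X$ despite the potentially infinite total measure of $X$; these are controlled by the compact support of $f$ together with the uniform summability provided by Corollary \ref{corsum}. Once these interchanges are validated, part (2) reduces essentially to repackaging Lemmas \ref{lapstab} and \ref{l2lem} with $f$ integrated against the kernel rather than applied to it.
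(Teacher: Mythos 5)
Your proposal is correct and follows essentially the same route as the paper: unfold $\Heat_1$ over a metric measure fundamental domain via Tonelli to identify $\heat^1_tf$ with $\heat_t(f\circ \pi)$ (the paper simply quotes \cite[Lemma~2.54]{MS} for the pointwise limit $\heat_t(f\circ\pi)(x)\to f(x)$ rather than rerunning the approximate-identity argument), then get the $L^2$-local convergence by dominated convergence, and deduce (2) from the kernel-level Lemmas \ref{lapstab} and \ref{l2lem}. The one cosmetic caveat is that the unit-mass identity $\int_X\Heat_1(\cdot,y,t)\,d\meas=1$ should be obtained directly from the fundamental-domain unfolding and stochastic completeness of $\widetilde X$ (as the paper does) rather than from part (1) applied to $f\equiv 1$, since the constant function is not in $C_c(X)$ when $X$ is noncompact.
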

\begin{proof}
Since (2) is a direct consequence of Lemma \ref{lapstab}, let us check (1).
By definition of $\tilde X$, we have
\begin{align}
\heat_t(f \circ \pi)(\tilde x)&=\int_{\widetilde X}\Heat_{\widetilde X} (\tilde x, \tilde y, t) f\circ \pi (\tilde y)d \tilde \meas (\tilde y) \nonumber \\
&=\sum_{\gamma \in \Gamma}\int_{D}\Heat_{\widetilde X} (\tilde x,\gamma \tilde y,t)f \circ \pi (\tilde y)d \tilde \meas (\tilde y) \nonumber \\
&=\int_{X}\Heat_1 (x, y, t)f(y)d \meas(y) =\heat^1_tf(x).
\end{align}
On the other hand since $f \circ \pi \in C_b(Y)$, 
thanks to \cite[Lemma 2.54]{MS}, we know $\heat_t(f\circ \pi)(x)$ converges to $f \circ \pi(x)$, namely  
$
\heat^1_tf(x) \to f(x).
$
The final statement of (1) comes from this with the dominated convergence theorem and a fact $|\heat^1_tf(x)| \le \sup |f|$.
\end{proof}
As an immediate consequence of Lemma \ref{lem:2} (1), we have $\heat^1_{t_i}f_i(x) \to f(x)$ for all $x \in X, t_i \to 0^+$ and uniform convergent sequence $f_i \to f$ in $C_c(X)$.
We are now in a position to prove Proposition \ref{prop:heat-cover}.
\begin{lem}
We have $\Heat_1=\Heat_X$. Namely Proposition \ref{prop:heat-cover} holds.
\end{lem}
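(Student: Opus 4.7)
The plan is to identify the operator $\heat^1_t$ with the heat semigroup $\heat_t$ of $(X, \dist, \meas)$ via uniqueness of $L^2$-solutions of the heat equation, and then deduce equality of the two kernels by testing against compactly supported continuous functions. All the ingredients are already in place from Lemmas \ref{lapstab}, \ref{l2lem} and \ref{lem:2}.

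First, I would assemble the properties of $u(t, \cdot) := \heat^1_t f$ for $f \in C_c(X)$. By Lemma \ref{lem:2}, $u(t, \cdot) \in D(\Delta)$ for every $t > 0$, satisfies $\partial_t u = \Delta u$ in $L^2(X, \meas)$, and $u(t,x) \to f(x)$ pointwise as $t \to 0^+$, with the uniform bound $|u(t,x)| \le \|f\|_\infty$. Next, to make $u(t,\cdot) = \heat_t f$ accessible to the standard uniqueness argument, I would upgrade the convergence $u(t,\cdot) \to f$ to genuine $L^2(X, \meas)$-convergence. The formula $u(t,x) = \heat_t(f \circ \pi)(\tilde x)$ and the fact that some fundamental domain $D$ satisfies $\tilde\meas(\partial D) = 0$ give
\begin{equation*}
\int_X |u(t,x) - f(x)|^2 \, d\meas(x) = \int_D |\heat_t(f \circ \pi)(\tilde y) - (f \circ \pi)(\tilde y)|^2 \, d\tilde\meas(\tilde y);
\end{equation*}
pointwise convergence on $D$ combined with an $L^2$ dominating function on $D$ (produced from the Gaussian upper bounds in Lemma \ref{finitenesslem} and Corollary \ref{corsum}, which control $u(t,\cdot)$ off a neighborhood of $\mathrm{supp}(f)$) will deliver the desired convergence.

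Once $L^2$-convergence is in hand, set $w(t) := u(t,\cdot) - \heat_t f$. Then $w(t) \in D(\Delta)$, $\partial_t w = \Delta w$ in $L^2$, and $w(t) \to 0$ in $L^2$ as $t \to 0^+$. The standard energy computation
\begin{equation*}
\frac{d}{dt}\|w(t)\|_{L^2}^2 = 2\langle \Delta w, w\rangle_{L^2} = -2\int_X |\nabla w|^2 \, d\meas \le 0
\end{equation*}
shows $\|w(t)\|_{L^2}$ is nonincreasing, and since it vanishes in the limit it vanishes for all $t > 0$. Hence $\heat^1_t f = \heat_t f$ for every $f \in C_c(X)$, which upon rewriting both sides as kernel integrals gives
\begin{equation*}
\int_X \Heat_1(x,y,t) f(y) \, d\meas(y) = \int_X \Heat_X(x,y,t) f(y) \, d\meas(y), \qquad \forall \, f \in C_c(X).
\end{equation*}
Since $\Heat_1(x, \cdot, t)$ is continuous (locally Lipschitz by Lemma \ref{lapstab}) and $\Heat_X(x, \cdot, t)$ is continuous by the general theory recalled in \S \ref{pre}, testing against a dense family yields $\Heat_1 \equiv \Heat_X$ pointwise on $X \times X \times (0, \infty)$.

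The main obstacle is the $L^2$-convergence $u(t,\cdot) \to f$: locally it is immediate from Lemma \ref{lem:2} and dominated convergence, and when $X$ is compact (the case needed in this paper) it requires nothing further, but in the general noncompact setting of Proposition \ref{prop:heat-cover} one must control $u(t,\cdot)$ uniformly away from $\mathrm{supp}(f)$, which is where the Gaussian bounds of Lemma \ref{finitenesslem} and Corollary \ref{corsum} are really used. Everything else is an assembly of the preceding lemmas with the textbook uniqueness argument for the heat semigroup on an RCD space.
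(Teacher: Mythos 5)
Your argument is correct, but it takes a genuinely different route from the one in the appendix. The paper works directly at the level of kernels: for cut-offs $\phi_R$ supported in $B_{2R}(w)$ it writes
\begin{equation*}
\Heat_X(x,y,t)-\Heat_1(x,y,t)=\int_0^t\frac{d}{ds}\int_X\phi_R(z)^2\Heat_1(x,z,t-s)\Heat_X(z,y,s)\,d\meas(z)\,ds,
\end{equation*}
integrates by parts so that only terms containing $\nabla\phi_R$ survive, and sends $R\to\infty$ using the $L^2$ bounds of Lemma \ref{l2lem}; the only small-time input is the pointwise approximate-identity property of Lemma \ref{lem:2}(1) tested against continuous bounded functions. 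Your route instead identifies the semigroups $\heat^1_t$ and $\heat_t$ by the classical energy/uniqueness argument. This is more conceptual (no cut-offs, no $R\to\infty$ limit) but requires two inputs the paper's proof avoids: (i) the \emph{global} $L^2(X,\meas)$ convergence $\heat^1_tf\to f$, and (ii) local absolute continuity of $t\mapsto\|w(t)\|_{L^2}^2$ on $(0,\infty)$, without which ``derivative $\le 0$'' does not give monotonicity. Both are fillable, but for (i) I would not try to extract a $t$-uniform integrable dominating function from Lemma \ref{finitenesslem}: its bound carries the factors $\exp(s^2/4t)$ and $\tilde\meas(B_{\sqrt t}(\tilde x))$, so uniformity as $t\to0^+$ is not immediate. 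A cleaner path is to note that $\heat^1_t$ is a self-adjoint Markovian contraction on $L^2$ (symmetry of $\Heat_1$, $\int_X\Heat_1(x,y,t)\,d\meas(y)=1$ and Jensen give $\|\heat^1_tf\|_{L^2}\le\|f\|_{L^2}$); combined with $\langle\heat^1_tf,g\rangle=\langle f,\heat^1_tg\rangle\to\langle f,g\rangle$ for $g\in C_c(X)$ (dominated convergence on the compact set $\mathrm{supp}\,f$) and lower semicontinuity of the norm under weak convergence, this upgrades the pointwise convergence to strong $L^2$ convergence. For (ii), the estimates behind Lemma \ref{lapstab} give local Lipschitz continuity of $t\mapsto\heat^1_tf$ in $L^2$, which suffices. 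With these two points made explicit your proof is complete, and the final passage from $\heat^1_tf=\heat_tf$ for all $f\in C_c(X)$ to $\Heat_1\equiv\Heat_X$ via continuity of both kernels is fine; this is in fact the same mechanism (uniqueness of the $L^2$ heat flow) that the authors invoke in the parallel argument for the quotient $\bar Y$.
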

\begin{proof}
Fix $w \in X$.
For any $R \ge 1$, take a Lipschitz cut-off $\phi_R:X \to [0, 1]$ with $\phi_R\equiv 1$ on $B_R(w)$, $\mathrm{supp}\,\phi_R \subset B_{2R}(w)$ and $|\nabla \phi_R|\le 2R^{-1}$. Then
Lemma \ref{lem:2} (1) with the continuity of $\Heat_1$ allows us to show
\begin{equation}\label{heat difference}
\Heat(x, y, t)-\Heat_1(x, y, t)=\int_0^t\frac{d}{ds}\int_X\phi_R(z)^2\Heat_1(x,z,t-s)\Heat_X(z,y,s)d\meas (z)ds. 
\end{equation}
Moreover, Lemma \ref{lem:2} (2) also allows us to prove that the right hand side of (\ref{heat difference}) is equal to 
\begin{align}
&-\int_0^t\int_X\phi_R(z)^2\Delta_z \Heat_1(x, z, t-s)\cdot \Heat_X (z, y, s)d\meas(z) ds\nonumber \\
&+\int_0^t\int_X\phi_R(z)^2H_1(x, z, t-s)\Delta_z\Heat_X (z,y,s)d\meas (z) ds,
\end{align}
namely, $\Heat-\Heat_1$ is equal to
\begin{align}\label{keys}
&2\int_0^t\int_X\phi_R(z)\Heat_X (z, y, s)\left\langle \nabla_z \Heat_1(x, z, t-s), \nabla \phi_R(z)\right\rangle d\meas(z) ds\nonumber \\
&-2\int_0^t\int_X\phi_R(z)H_1(x, z, t-s)\left\langle \nabla_z \Heat_X (z, y, s), \nabla \phi_R(z)\right\rangle d\meas (z) ds.
\end{align}
On the other hand the first term of (\ref{keys}) can be estimated as;
\begin{align}
&2\left|\int_0^t\int_X\phi_R(z)\Heat_X (z, y, s)\left\langle \nabla_z \Heat_1(x, z, t-s), \nabla \phi_R(z)\right\rangle d\meas(z) ds\right|\nonumber \\
&\le \frac{4}{R}\int_0^t\left(\int_{B_{2R}(w)\setminus B_R(w)}\Heat_X^2(z,y,s)d\meas(z) \right)^{\frac{1}{2}} \cdot \left( \int_{B_{2R}(w)\setminus B_R(w)}|\nabla_z\Heat_1(x,z,t-s)|^2d\meas (z)\right)^{\frac{1}{2}}ds \nonumber \\
&\le \frac{4}{R}\int_0^t\left(\int_{X\setminus B_R(w)}\Heat_X^2(z,y,s)d\meas(z) \right)^{\frac{1}{2}} \cdot \left( \int_{X \setminus B_R(w)}|\nabla_z\Heat_1(x,z,t-s)|^2d\meas (z)\right)^{\frac{1}{2}}ds. 
\end{align}
Thus letting $R \to \infty$ with the monotone convergence theorem and Lemma \ref{l2lem} shows that the first term of (\ref{keys}) converges to $0$. Similarly we see that under the same limit of $R\to \infty$, the second term of (\ref{keys}) also converges to $0$. Thus letting $R\to \infty$ in (\ref{heat difference}), we have $\Heat=\Heat_1$.
\end{proof}
\subsection{Proof of Lemma \ref{Duhamelpri}; Duhamel principle}\label{duh}
Since the proof is essentially similar to the discussions in the previous subsection, let us provide only an outline of the proof because this situation is simpler than the previous one since the space $\widetilde Y$ is compact.
Firstly let us denote by $\Heat_2$ the right hand side of (\ref{eq:heat duhamel}). Then it is trivial that $\Heat_2$ is locally Lipschitz on $\widetilde Y \times \widetilde Y \times (0, \infty)$ because $\Heat_{\bar Y}$ and $\Heat_{\widetilde Y}$ are locally Lipschitz.
Thus in particular for all $f \in L^2(\widetilde Y, \tilde \meas), x \in X$ and $t>0$, 
\begin{equation}
\heat^2_tf(x):=\int_{\widetilde Y}\Heat_2 (x, y, t)f(y)d\tilde \meas(y)
\end{equation}
is well-defined. Recall (\ref{estimateq});
\begin{equation}
|\HeatQ(z, y, t)| \le \exp \left(-\frac{C}{t}\right).
\end{equation}
In particular  for any $f \in C(\tilde Y)$
we have 
\begin{equation}
\lim_{s \to 0^+}\int_{\widetilde Y}f(z)\HeatQ(z, y, s)d\tilde \meas(z)=0
\end{equation}
and
\begin{equation}\label{pointwiselimit}
\lim_{t \to 0^+}\heat^2_tf(x)=\int_{\widetilde Y}\HeatK(x,y,t)f(y)d\tilde \meas(y)=f(x),
\end{equation}
where the final equality in (\ref{pointwiselimit}) comes from the definition of $\HeatK$.

Then recalling a formula;
\begin{equation}
\frac{d}{dt}\int_0^t\phi(t,s)ds=\int_0^t\frac{d}{dt}\phi(t,s)ds+\phi(t,s),
\end{equation}
we have
\begin{align}\label{calc}
\frac{d}{dt}\Heat_2(x,y,t)
&=\frac{d}{dt}\HeatK(x,y,t)-\int_0^t\int_{\widetilde Y}\Heat_{\widetilde Y} (x, z, s)\frac{d}{dt}\HeatQ(z, y, t-s)d\tilde \meas(z)ds \nonumber \\
&\qquad -\lim_{u \to 0^+}\int_{\widetilde Y}\Heat_{\widetilde Y}(x, z, t-u)\HeatQ(z, y, u)d\tilde \meas(z) \nonumber \\
&=\frac{d}{dt}\HeatK(x,y,t)+\int_0^t\int_{\widetilde Y}\Heat_{\widetilde Y} (x, z, s)\frac{d}{ds}\HeatQ(z, y, t-s)d\tilde \meas(z)ds \nonumber \\
&=\frac{d}{dt}\HeatK(x,y,t)+\lim_{u \to 0}\int_{\widetilde Y}\Heat_{\widetilde Y}(x, z, t-u) \HeatQ(z,y,u)d\tilde \meas(z)-\HeatQ(x,y, t) \nonumber \\
&\qquad-\int_{\tilde Y}\int_0^t\frac{d}{ds}\Heat_{\widetilde Y}(x,z,s) \cdot \HeatQ(z, y, t-s)dsd\tilde \meas(z) \nonumber \\
&=\frac{d}{dt}\HeatK(x,y,t)-\HeatQ(x,y,t)-\int_{\widetilde Y}\int_0^t\frac{d}{ds}\Heat_{\widetilde Y}(x,z,s) \cdot \HeatQ(z, y, t-s)dsd\tilde \meas(z) \nonumber \\
&=\Delta_x\HeatK(x, y,t)-\int_{\widetilde Y}\int_0^t\Delta_x\Heat_{\widetilde Y}(x,z,s) \cdot \HeatQ(z, y, t-s)dsd\tilde \meas(z) \nonumber \\
&=\Delta_x\Heat_2(x, y, t).
\end{align}
In particular combining (\ref{calc}) with (\ref{pointwiselimit}) shows that for any $f \in C(\widetilde Y)$, we have $\frac{d}{dt}\heat^2_tf=\Delta \heat^2_tf$ and $\heat^2_tf \to f$ as $t \to 0^+$ in $L^2(\widetilde Y, \tilde \meas)$ because of the dominated convergence theorem with $|\heat^2_tf| \le \sup |f|$ by definition.
In particular 
$\heat^2_tf$  coincides with the heat flow $\tilde \heat_tf$ on $\widetilde Y$.

In order to conclude that $\Heat_2 =\Heat_{\tilde Y}$, it is enough to check $\heat^2_tf=\tilde \heat_tf$ for any $f \in L^2(\widetilde Y, \tilde \meas)$ because of the continuity of $\Heat_2$. 
Fix $f \in L^2(\widetilde Y, \tilde \meas)$ and take a sequence $f_i \in C(\widetilde Y)$ with $f_i \to f$ in $L^2(\widetilde Y, \tilde \meas)$.
It is known that $\tilde \heat_tf_i\to \tilde \heat_tf$ in $L^2(\widetilde Y, \tilde \meas)$ in general. In particular after passing to a subsequence,  $\tilde \heat_tf_i(x)\to \tilde \heat_tf(x)$ as $i \to \infty$ for $\tilde \meas$-a.e. $x \in \widetilde Y$. Finally since 
\begin{align}
\left|\heat^2_tf_i(x)-\heat^2_tf(x )\right|&\le \int_{\widetilde{Y}}\Heat_2(x,y,t)\left| f_i(y)-f(y)\right|d\tilde \meas(y) \nonumber \\
&\le \left( \int_{\bar{Y}}\Heat_2(x,y,t)^2d\tilde \meas(y)\right)^{1/2}\cdot \|f_i-f\|_{L^2} \to 0,
\end{align}
we have $\heat^2_tf(x)=\tilde{\heat}_tf(x)$ for $\tilde \meas$-a.e. $x \in \widetilde Y$ because of $\heat^2_tf_i(x)=\tilde \heat_tf_i(x)$. Thus we have Lemma \ref{Duhamelpri}.

\end{document}